\newcounter{braid}
\newcounter{strands}
\def\cross{%
  \@ifnextchar^{\message{Got sup}\cross@sup}{\cross@sub}}
\def\cross@sup^#1_#2{\render@cross{#2}{#1}}
\def\cross@sub_#1{\@ifnextchar^{\cross@@sub{#1}}{\render@cross{#1}{1}}}
\def\cross@@sub#1^#2{\render@cross{#1}{#2}}
\def\render@cross#1#2{
  \def\strand{#1}
  \def\crossing{#2}
  \pgfmathsetmacro{\cross@y}{-\value{braid}*\braid@h}
  \pgfmathtruncatemacro{\nextstrand}{#1+1}
  \foreach \thread in {1,...,\value{strands}}
  {
    \pgfmathsetmacro{\strand@x}{\thread * \braid@w}
    \ifnum\thread=\strand
    \pgfmathsetmacro{\over@x}{\strand * \braid@w + .5*(1 - \crossing) * \braid@w}
    \pgfmathsetmacro{\under@x}{\strand * \braid@w + .5*(1 + \crossing) * \braid@w}
    \draw[braid] \pgfkeysvalueof{/tikz/braid start} +(\over@x pt,\cross@y pt) to[out=-90,in=90] +(\under@x pt,\cross@y pt -\braid@h);
    \draw[braid] \pgfkeysvalueof{/tikz/braid start} +(\under@x pt,\cross@y pt) to[out=-90,in=90] +(\over@x pt,\cross@y pt -\braid@h);
    \else
    \ifnum\thread=\nextstrand
    \else
     \draw[braid] \pgfkeysvalueof{/tikz/braid start} ++(\strand@x pt,\cross@y pt) -- ++(0,-\braid@h);
    \fi
   \fi
  }
  \stepcounter{braid}
}
\tikzset{braid/.style={double=\pgfkeysvalueof{/tikz/braid colour},double distance=1pt,line width=2pt,white}}
\theoremstyle{plain}
\theoremstyle{definition}
\newtheorem{definition}{Definition}
\newtheorem{theorem}{Theorem}
\newtheorem{lemma}{Lemma}
\newtheorem{corollary}{Corollary}
\newcommand{\bd}{\partial}
\newcommand{\set}[1]{ \{ #1 \} }
\newcommand{\vect}[1]{ {\textbf #1 } }
\newcommand{\Prod}[2]{\Pi_{#2}^{#1}}
\newcommand{\ProdPow}[3]{ \Big( \Prod{#1}{#2} \Big)^{#3} }
\newcommand{\BigProd}[4]{\Prod{#1}{#2} \cdots \Prod{#3}{#4} }
\newcommand{\as}{\alpha_*}
\newcommand{\bs}{\beta_*}
\newcommand{\gs}{\gamma_*}
\newcommand{\ds}{\delta_*}
\newcommand{\es}{\varepsilon_*}
\begin{document}

\markboth{E. Amoranto, B. Doleshal, and M. Rathbun}
{Additional Cases of Positive Twisted Torus Knots}



\title{Additional Cases of Positive Twisted Torus Knots}

\author{Evan Amoranto}
\address{Evan Amoranto; California State University, Fullerton} 
\email{eamoranto@csu.fullerton.edu}

\author{Brandy Doleshal}
\address{Brandy Doleshal; Sam Houston State University}
\email{bdoleshal@shsu.edu}

\author{Matt Rathbun}
\address{Matt Rathbun; California State University, Fullerton}
\email{mrathbun@fullerton.edu}

\maketitle

\begin{abstract}
A twisted torus knot is a knot obtained from a torus knot by twisting adjacent strands by full twists. The twisted torus knots lie in $F$, the genus 2 Heegaard surface for $S^3$. Primitive/primitive and primitive/Seifert knots lie in $F$ in a particular way. Dean gives sufficient conditions for the parameters of the twisted torus knots to ensure they are primitive/primitive or primitive/Seifert. Using Dean's conditions, Doleshal shows that there are infinitely many twisted torus knots that are fibered and that there are twisted torus knots with distinct primitive/Seifert representatives with the same slope in $F$. In this paper, we extend Doleshal's results to show there is a four parameter family of positive twisted torus knots. Additionally, we provide new examples of twisted torus knots with distinct representatives with the same surface slope in $F$.
\end{abstract}

\section{Introduction}	
A twisted torus knot is a knot obtained from a torus knot $T(p,q)$ by twisting $r$ adjacent strands $n$ full twists. We denote the twisted torus knot with these parameters by $K(p,q,r,n)$. Much work has been done recently in the pursuit of understanding the twisted torus knots. Lee classifies the twisted torus knots that are unknotted \cite{LeeTTKU} and gives an infinite family of twisted torus knots that are cable knots \cite{LeeTTKCK}. Morimoto describes an infinite family of composite twisted torus knots \cite{MorCTTK} and shows that for any positive integer $n$, there are infinitely many twisted torus knots with an $n$-string essential tangle decomposition \cite{MorTDTTK}. Moriah and Sedgwick provide an infinite class of hyperbolic twisted torus knots with a unique minimal genus Heegaard splitting \cite{MorHSTTK}. Bowman, Taylor and Zupan compute bridge numbers for a family of hyperbolic twisted torus knots and show that each knot in this family has two arbitrarily large gaps in its bridge spectrum \cite{BowBSTTK}. Doleshal \cite{DolFPSTTK} provides a class of twisted torus knots that are fibered by proving the following theorem.

\begin{theorem}[Theorem 3.1 of \cite{DolFPSTTK}]
\label{theorem:OriginalDoleshal}
The twisted torus knots $K(p, q, r, -n)$, with $n > 0$ and $r < q$, are positive when $nq < p$.
\end{theorem}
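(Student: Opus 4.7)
My plan is to exhibit a positive braid word whose closure is $K(p,q,r,-n)$. Writing $\delta_p := \sigma_1\sigma_2\cdots\sigma_{p-1}$, the knot $K(p,q,r,-n)$ is the closure of the $p$-braid
\[
\beta \;=\; \delta_p^{\,q}\,(\sigma_1\sigma_2\cdots\sigma_{r-1})^{-rn}.
\]
By the cyclic symmetry of braid closure, I may equivalently work with $\gamma \;=\; (\sigma_1\sigma_2\cdots\sigma_{r-1})^{-rn}\,\delta_p^{\,q}$, which places all $rn$ negative sweeps to the left of the $q$ positive $\delta_p$-factors.

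The engine of the proof is a pair of elementary identities in $B_p$. The \emph{cancellation identity}
\[
(\sigma_1\sigma_2\cdots\sigma_{r-1})^{-1}\,\delta_p \;=\; \sigma_r\sigma_{r+1}\cdots\sigma_{p-1}
\]
follows by cancelling the reversed word $\sigma_{r-1}^{-1}\cdots\sigma_1^{-1}$ against the prefix $\sigma_1\cdots\sigma_{r-1}$ of $\delta_p$ letter by letter. The \emph{shift identity} $\sigma_j\,\delta_p \;=\; \delta_p\,\sigma_{j-1}$ (valid for $2\le j\le p-1$) follows from the standard conjugation relation $\delta_p\,\sigma_i\,\delta_p^{-1}=\sigma_{i+1}$; iterating it $r-1$ times gives the useful consequence $(\sigma_r\cdots\sigma_{p-1})\,\delta_p^{\,r-1} \;=\; \delta_p^{\,r-1}\,(\sigma_1\cdots\sigma_{p-r})$.

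The plan is to process the $rn$ factors $(\sigma_1\cdots\sigma_{r-1})^{-1}$ one at a time. Each round applies the cancellation identity to consume one $\delta_p$ and convert the leftmost negative sweep into a positive residue $\sigma_r\cdots\sigma_{p-1}$; the shift identity then transports that residue past further $\delta_p$'s, lowering its indices, so as to expose the next $\delta_p$ for the subsequent cancellation. After $rn$ rounds all negative generators are gone and what remains is a positive braid word; its closure is $K(p,q,r,-n)$, which therefore is positive.

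The main obstacle is the combinatorial bookkeeping. After several rounds multiple positive residues of different index-ranges sit interleaved with the remaining $\delta_p$'s, and the residues interact nontrivially with one another and with the next negative sweep to be cancelled. One must verify that the hypotheses $r<q$ and $nq<p$ together give enough room to carry all $rn$ cancellations through without any shifted index leaving $\{1,\ldots,p-1\}$ and without ever running out of $\delta_p$'s. Checking this carefully is the heart of the proof; once it is in place, the positivity of the final braid word gives the conclusion immediately.
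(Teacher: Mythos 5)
Your two identities are correct (the free cancellation $(\sigma_1\cdots\sigma_{r-1})^{-1}\delta_p=\sigma_r\cdots\sigma_{p-1}$, and the shift $\sigma_j\delta_p=\delta_p\sigma_{j-1}$, which is a cousin of Lemma \ref{lemma:Rework 3.4} here), but the step you defer as ``bookkeeping'' is not bookkeeping --- it is where the argument breaks. Your scheme consumes exactly one factor $\delta_p$ per negative sweep, so it needs $rn\le q$ copies of $\delta_p$; the hypotheses $r<q$ and $nq<p$ do \emph{not} give this. For example $K(7,3,2,-2)$ satisfies $r=2<q=3$ and $nq=6<p=7$, yet $rn=4>q=3$: after three rounds every $\delta_p$ is gone and one negative sweep remains, facing only the accumulated positive residues. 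Even before running out, the residues obstruct the procedure: after one full negative twist ($r$ rounds) the residues pile up into a block of the form $(\sigma_r\cdots\sigma_{p-1})(\sigma_{r-1}\cdots\sigma_{p-2})\cdots(\sigma_1\cdots\sigma_{p-r})$ whose lowest index is $1$, so the shift identity can no longer transport it rightward past further $\delta_p$'s, and no ``next $\delta_p$'' can be exposed by the moves you allow. Continuing requires sliding a negative sweep \emph{through} such a block with a controlled index shift --- this is exactly the content of Lemmas \ref{lemma:Glesser}--\ref{lemma:Rework 3.4} and \ref{sigma forward} in this paper --- and, even granting those, the resource count on $p$ strands still does not close up from the stated hypotheses.

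The underlying issue is the choice of braid axis. You put the braid on $p$ strands, where only $q$ positive sweeps are available; the hypothesis $nq<p$ is tailored to the other representation, the $q$-strand braid $\big(\Pi_1^{q-1}\big)^p\big(\Pi_1^{r-1}\big)^{-rn}$ used by Doleshal and in Section \ref{section:Braids} here. There one has $p>nq$ positive sweeps, and the cancellations are organized either by factoring each full twist on $q$ strands so that it contains a full twist on the first $r$ strands (Lemma \ref{full twist}), or by absorbing one negative full twist into $r$ leftover sweeps (Lemma \ref{pi relation}); the count $nq<p$ then guarantees enough full twists, which is precisely the arithmetic your hypotheses actually provide. (A small additional point: your $p$-strand closure represents $K(q,p,r,-n)$ in this paper's convention, so you should invoke Dean's observation that this agrees with $K(p,q,r,-n)$ when $r<\min(p,q)$.) As written, the proposal is a plausible opening move plus an unproved claim at the crux, and the claim as stated (never running out of $\delta_p$'s) is false in general under the theorem's hypotheses.
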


Note that by \cite{DeaHKSSFDS}, if $r < \min(p, q)$, then the knots $K(p, q, r, -n)$ and $K(q, p, r, -n)$ are isotopic, so this result actually holds for $K(p, q, r, -n)$ with $n > 0$ and $r < p$ when $np < q$ as well. We extend Theorem 3.1 of \cite{DolFPSTTK} with the following:

\begin{restatable}{theorem}{MainTheorem}\label{theorem:MainTheorem}
Suppose $p$, $q$, $r$, $n \in \mathbb N$ with $p$, $q$, $r \ge 2$ and $(p, q) = 1$. Further suppose that $k$ and $e$ are the integers with $k \ge 0$, $1 \le e <q $, and $p = kq + e$. The twisted torus knot $K(p,q,r, -n)$ is positive if $n < k + 1$, or if $n = k + 1$ and $r \le e$.
\end{restatable}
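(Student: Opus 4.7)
I would realize $K(p,q,r,-n)$ as the closure of the $q$-strand braid
\[
\beta \;=\; \delta_q^{p}\,\delta_r^{-rn},\qquad \delta_m:=\sigma_1\sigma_2\cdots\sigma_{m-1}.
\]
Writing $p=kq+e$ with $1\le e<q$, the hypothesis $n<k+1$ is the same as $n\le k$, which forces $nq\le kq<p$; this case is therefore immediate from Theorem~\ref{theorem:OriginalDoleshal}, and the real content is the boundary case $n=k+1$ with $r\le e$.

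For this case I would exploit centrality of the full twist $\delta_q^{q}=\Delta_q^{2}$ in $B_q$. Set $Q:=\delta_q^{q}\,\delta_r^{-r}$; the underlying braid-theoretic content of Theorem~\ref{theorem:OriginalDoleshal} is that $Q$ is represented by a positive word in $B_q$ (for instance, one checks directly that in $B_3$ with $r=2$, $Q=\sigma_2\sigma_1^{2}\sigma_2$). Since $\delta_q^{q}$ is central, $Q\,\delta_r^{r}=\delta_q^{q}=\delta_r^{r}\,Q$, so $Q$ commutes with $\delta_r^{r}$, and a short induction gives $\delta_q^{jq}\,\delta_r^{-jr}=Q^{j}$ for every $j\ge 0$. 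Taking $j=k$ and sliding $\delta_r^{-r}$ past the central factor $\delta_q^{kq}$ rewrites
\[
\beta\;=\;\delta_q^{kq+e}\,\delta_r^{-(k+1)r}\;=\;\delta_q^{e}\,\delta_r^{-r}\cdot Q^{k}.
\]
Since $Q^{k}$ is positive, the entire problem reduces to showing that the closure of $\delta_q^{e}\,\delta_r^{-r}$ is a positive braid closure.

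The cleanest way I see to carry out this last reduction is via the following key lemma: for $q>e\ge r\ge 2$, there is a positive braid $Y\in B_q$ with $\delta_q^{e}=\delta_r^{r}\cdot Y$ in $B_q$. Granting the lemma, $\delta_q^{e}\,\delta_r^{-r}=\delta_r^{r}\,Y\,\delta_r^{-r}$ is conjugate in $B_q$ to $Y$, and cyclic invariance of the closure together with the commutativity of $Q^{k}$ with $\delta_r^{r}$ shows that the closure of $\beta$ equals the closure of the positive braid $Y\cdot Q^{k}$.

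The main obstacle is the key lemma itself. Small cases support it: using only the commutations $\sigma_i\sigma_j=\sigma_j\sigma_i$ for $|i-j|\ge 2$ and the braid relation $\sigma_i\sigma_{i+1}\sigma_i=\sigma_{i+1}\sigma_i\sigma_{i+1}$, one computes $\delta_3^{2}=\sigma_1^{2}\cdot(\sigma_2\sigma_1)$ and $\delta_4^{2}=\sigma_1^{2}\cdot(\sigma_2\sigma_1\sigma_3\sigma_2)$. For the general lemma I would induct on $e\ge r$; the base case $e=r$ requires rearranging $(\sigma_1\cdots\sigma_{q-1})^r$ by pushing the generators $\sigma_r,\ldots,\sigma_{q-1}$ to the right (they commute with $\sigma_1,\ldots,\sigma_{r-2}$) and using the braid relation to aggregate $r$ copies of each of $\sigma_1,\ldots,\sigma_{r-1}$ on the left. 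The delicacy is that this braid relation is not length-preserving in each individual generator, and the hypothesis $r\le e$ is exactly what makes enough copies of each $\sigma_i$ with $i<r$ available in $\delta_q^{e}$ to form a full twist $\delta_r^{r}$ with a positive remainder.
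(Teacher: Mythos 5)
Your overall architecture is sound and in places genuinely slicker than the paper's: reducing to the case $n=k+1$, using centrality of the full twist $\delta_q^{q}$ to package the $k$ full twists together with $k$ of the $k+1$ negative $r$-twists as $Q^{k}$ with $Q=\delta_q^{q}\delta_r^{-r}$, and then removing the last $\delta_r^{\pm r}$ by conjugation is a clean substitute for the paper's explicit index-shifting manipulations (Lemmas \ref{lemma:Glesser}, \ref{lemma:ShiftBottomLeftToTopRight}, \ref{sigma forward}) and its repeated appeals to the factorization in Lemma \ref{full twist}; the final identities you use ($\delta_q^{jq}\delta_r^{-jr}=Q^{j}$, commutation of $Q$ with $\delta_r^{r}$, invariance of the closure under conjugation) all check out. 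The genuine gap is that your ``key lemma'' --- $\delta_q^{e}=\delta_r^{r}\,Y$ with $Y$ positive for $q>e\ge r$ --- is only asserted, supported by the computations of $\delta_3^{2}$ and $\delta_4^{2}$ and a one-sentence sketch of an induction whose delicacy you yourself flag. Up to the word-reversal anti-automorphism of $B_q$ (which fixes each $\sigma_i$ and reverses the order of letters, hence preserves positivity), this is exactly the paper's Lemma \ref{pi relation}, whose proof is the technical heart of Section \ref{section:Braids} and rests on the whole chain of supporting lemmas. Without a proof of it, your argument reduces the theorem to an unproven claim essentially equivalent in difficulty to the new content of the theorem (the case $n=k+1$, $r\le e$).

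A secondary issue: you justify positivity of $Q$ by calling it ``the underlying braid-theoretic content of Theorem \ref{theorem:OriginalDoleshal},'' but that theorem asserts positivity of certain \emph{knots}, not of a specific braid \emph{word}, so it cannot be cited for the braid-level statement you need (you need $YQ^{k}$ to be a positive word, so $Q$ must equal a positive word in $B_q$, not merely close up to a positive knot). The braid-level fact is true --- it is the content of the paper's Lemma \ref{full twist} --- and it also follows from your own key lemma: by centrality, $Q=\delta_r^{-r}\delta_q^{q}=\bigl(\delta_r^{-r}\delta_q^{r}\bigr)\delta_q^{q-r}=Y\,\delta_q^{q-r}$, which is positive once the case $e=r$ of the key lemma is established. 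So the single missing ingredient, on which everything else in your outline depends, is an actual proof of the key lemma.
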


This theorem provides an infinite family of new examples beyond those discovered in \cite{DolFPSTTK}. In particular, we call attention to the condition that $ n \leq k+1$ in Theorem \ref{theorem:MainTheorem}. If $n \leq k$, then $nq \leq kq < p$, and the previous results apply, so the content of the extension is the case that $n = k+1$. If $n > k+1$, then there appears to be too much negative twisting introduced by the $n$ full twists, and we conjecture that the resulting twisted torus knots cannot be positive. In the case that $n = k+1$, however, we find a significant extension of known results, using $k$ full positive twists to cancel $n-1$ negative twists, and the remaining $e$ partial positive twists to cancel the remaining full negative twist.

The condition, then, that $e \geq r$ seems an important one. For instance, the twisted torus knot $K(4, 3, 2, -2)$, as noted in \cite{DolFPSTTK}, is not fibered, and thus does not have a positive braid representation. Note that in this case, $e < r$.

We offer two different proofs of Theorem 2, effectively describing the same simplification process, but viewed from fundamentally different perspectives. The first proof is algebraic, relying on calculations in the braid group, and is in the flavor of \cite{DolFPSTTK}. The second proof is geometric in nature, making use of surgery diagrams for a link. The two proofs can be seen to complement each other, either providing insight into an alternate interpretation of the other. Further, each viewpoint may appeal to the preferences of different readers, inviting both topologists and algebraists to consider questions like whether these hypotheses are sharp.

Once we establish the additional family of twisted torus knots that are positive, we will then be in a position to investigate the positions of these knots with respect to Heegaard surfaces in $S^3$, and introduce another infinite family of knots that have non-equivalent positions on the natural genus two Heegaard surface. Interestingly, the knots are all either torus knots or unknots.

Like the twisted torus knots, primitive/primitive and primitive/Seifert knots lie in the genus two Heegaard surface $F$ for $S^3$, where the primitive/Seifert knots are a generalization of the primitive/primitives knots. For compactness of notation, we sometimes use p/p to denote primitive/primitive and p/S to denote primitive/Seifert. The importance of the primitive/primitive and primitive/Seifert knots lies in the guarantee that they admit lens space and Seifert fibered space surgeries, respectively, at the surface slope of the knot with respect to $F$. Dean \cite{DeaSSFDSHK} gives a list of requirements on the parameters of a twisted torus knot that allows for the twisted torus knot to be primitive or Seifert with respect to one of the handlebodies bounded by $F$ and used these requirements to determine a class of twisted torus knots that are primitive/Seifert. Because it is not known that Dean's sufficient conditions to be a Seifert curve are also necessary conditions to be a Seifert curve, we refer to the curves that Dean describes as HEM-Seifert. (This term refers to Dean's hyper-, end- and middle-Seifert nomenclature.) 

Given a particular knot $K$, there can be two curves $\alpha$ and $\beta$ in $F$ so that $\alpha$ and $\beta$ are nonisotopic in $F$ but $\alpha$ and $\beta$ are both isotopic to $K$ in $S^3$. We call $\alpha$ and $\beta$ representatives of $K$. As shown in \cite{DolFPSTTK}, \cite{EudSFSDPSP}, and \cite{GunKDPPPSR}, there exist knots $K$ so that $\alpha$ and $\beta$ are both primitive/Seifert and have the same surface slope with respect to $F$, and there exist knots $K$ so that $\alpha$ is primitive/primitive and $\beta$ is primitive/Seifert while $\alpha$ and $\beta$ have the same surface slope with respect to $F$. In the former case, $K$ is called p/S-p/S and in the latter case, $K$ is called p/p-p/S.

Expanding on a theorem from \cite{DolFPSTTK}, we prove the following.

\begin{theorem}{DistinctPositionsOfKnots}\label{theorem:DistinctPositionsOfKnots}
For integers $k$, $q$, and $m$ with $k \in \set{0, 1}$, $q > 2$, $1 \leq m < q$, and $(q, m) = 1$, let $K_1$ and $K_2$ be the twisted torus knots $K_1 = K(kq + m, q, m, -1)$ and $K_2 = K(kq + q - m, q, q - m, -1)$, with their canonical embeddings on the genus two Heeagaard surface, $F$, for $S^3$. Then, $K_1$ and $K_2$ are isotopic as knots in $S^3$ and have the same surface slope with respect to $F$. 
\begin{enumerate}
\item[(i)]\label{Case1} When $k = 0$, and 
\begin{enumerate}
\item[(a)] $m = 1$, then $K_1$ is p/p, and $K_2$ is primitive with respect to one handlebody but neither primitive nor HEM-Seifert with resepct to the other.
\item[(b)] $m > 1$, then both $K_1$ and $K_2$ are primitive with respect to one handlebody, but neither primitive nor HEM-Seifert with respect to the other.
\end{enumerate}
\item[(ii)]\label{Case2} When $k = 1$, then both $K_1$ and $K_2$ are p/p.
\end{enumerate}
Further, when $k = 0$, there is no homeomorphism of $S^3$ sending the pair $(F, K_1)$ to $(F, K_2)$.
\end{theorem}

The case of $k \geq 2$ appears as a theorem in \cite{DolFPSTTK}. The proof that there is no homeomorphism sending $(F,K_1)$ to $(F, K_2)$ in that case employs the structure of the Seifert fibered space obtained from Dehn surgery on the knots at the shared surface slope with respect to $F$. In Case \ref{Case1}, the knots are not HEM-Seifert, so that method is not available to prove the corresponding result. Instead, we define the extended Goeritz group and use this new object to provide a homological obstruction to a such a homeomorphism.

In Section \ref{section:Definitions} we lay out important definitions. In Section \ref{section:Braids}, we present two proofs of Theorem \ref{theorem:MainTheorem}, one braid theoretic and one geometric. In Section \ref{section:InequivalentPositionsOfKnots}, we introduce the extended Goeritz group and prove Theorem \ref{theorem:DistinctPositionsOfKnots}.

\section{Definitions}
\label{section:Definitions}

\subsection{Positive Knots}

A crossing of an oriented knot diagram is said to be a \emph{positive} crossing if the orientation along the over-strand and then the under-strand obeys the right-hand rule. Otherwise, the crossing is said to be \emph{negative} (see Fig. \ref{fig:PositiveCrossing}). Note that reversing the orientation of the knot does not change whether each crossing is positive or negative, so the sign of a crossing is a property of the (unoriented) knot diagram.

A knot is said to be \emph{positive} if there exists a diagram for the knot with only positive crossings.

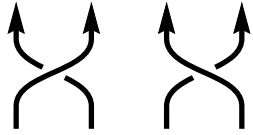
\begin{figure}[h]

\begin{adjustbox}{max totalsize={3in}{3in},center}
\begin{tikzpicture}
\braid[
 style all floors={fill=white},
 floor command={%
 \fill (\floorsx,\floorsy) rectangle (\floorex, \floorey);
 \draw (\floorsx,\floorsy) -- (\floorex,\floorsy);
 },
 line width=2pt,
 number of strands=4
 style strands={1}{},
] (braid) at (2,0) s_1^{-1}-s_3;
  \begin{scope}[every node/.style={draw,fill,single arrow,
    single arrow tip angle=30,
    single arrow head extend=2pt,
    single arrow head indent=1pt,
    inner sep=0pt}] 
  \node[shape border rotate=90] at (2,0) {};
  \node[shape border rotate=90] at (3,0) {};
  \node[shape border rotate=90] at (4,0) {};
   \node[shape border rotate=90] at (5,0) {};
    \end{scope}

\end{tikzpicture}

\end{adjustbox}

\caption{A positive crossing and a negative crossing, respectively, in a knot diagram.}
\label{fig:PositiveCrossing}

\end{figure}

\subsection{Twisted torus knots}

A torus knot is a knot that is isotopic to one lying on the surface of an unknotted torus in $S^3$. A torus knot, denoted as $T(p,q)$, can be parameterized by two co-prime integers, where $p$ denotes the number of times the knot wraps around the meridian of the torus and $q$ the number of times the knot wraps around the longitude. 

If we add additional full twists to some number of adjacent strands of a torus knot, the result is known as a \emph{twisted torus knot}. We define a twisted torus knot as a torus knot with an additional $n$ full twists on $r$ adjacent strands. Twisted torus knots can be parameterized by four numbers, $p, q, r$, and $n$, where $p,q$ indicate the original torus knot, $r$ is the number of strands to which additional twists are added, and $n$ is the number of full twists added to those strands, with the direction of twist determined by the sign of $n$. We denote a twisted torus knot by $K(p,q,r,n)$. Without loss of generality, a torus knot can always be represented with all positive crossings. So if we add a positive $n$ full twists on $r$ adjacent strands the resulting knot is still positive. There is a specific class of twisted torus knots that have been shown to be positive even if $n$ \emph{negative} full twists are added, namely when $r<q$ and $|n|q<p$ as shown by the second author.

\subsection{Braids}

The \emph{braid group on $m$ strands}, denoted $B_m$, is the group generated by $m-1$ elements, $\sigma_1, \sigma_2, \dots, \sigma_{m-1}$, with relations $\sigma_i \sigma_j = \sigma_j \sigma_i$ if $|i - j| > 1$, and $\sigma_i \sigma_{i+1} \sigma_{i} = \sigma_{i+1} \sigma_i \sigma_{i+1}$ for each $i = 1, \dots, m-2$. The latter family of relations is collectively referred to as the \emph{braid relation}. Thus,
\[ B_m = \langle \sigma_1, \sigma_2, \dots, \sigma_{m-1} \, | \, \sigma_i \sigma_j \sigma_i^{-1} \sigma_j^{-1},  i, j \in \set{1, \dots, m-1}, |i - j| > 1; \sigma_i \sigma_{i+1} \sigma_i \sigma_{i+1}^{-1} \sigma_i^{-1} \sigma_{i+1}^{-1}, i \in \set{1, \dots, m-2} \rangle. \] 

The group represents braiding among $m$ parallel strands of string, where the generator $\sigma_i$ corresponds to a half twist between the $i$th strand and $(i+1)$st strand in which the $(i+1)$st strand is pulled over the $i$th strand. The crossing created by the generator $\sigma_i$ is called a positive crossing. Similarly, the generator $\sigma_i ^{-1}$ creates a crossing between the $i$th strand and the $(i+1)$st strand except the $ith$ strand is pulled over the $(i+1)$st strand. The crossing created by the generator $\sigma_i ^{-1}$ is called a negative crossing. (See Fig. \ref{figure:Positive and Negative crossings}.)

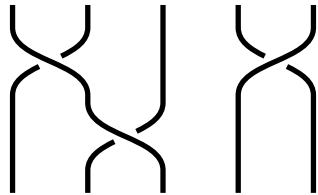
\begin{figure}[h]
\begin{center}
\begin{adjustbox}{max totalsize={3in}{3in},center}
\begin{tikzpicture}
\braid[
 style all floors={fill=white},
 floor command={%
 \fill (\floorsx,\floorsy) rectangle (\floorex, \floorey);
 \draw (\floorsx,\floorsy) -- (\floorex,\floorsy);
 },
 line width=2pt,
 number of strands=5
 style strands={1}{},
] (braid) at (2,0) s_1-s_4^{-1} s_2;

\end{tikzpicture}

\end{adjustbox}
\end{center}
\caption{Two negative crossings in the 3-strand braid $  \sigma_1^{-1} \sigma_2^{-1}$, and a positive crossing in a 2-strand braid $\sigma_1$, respectively.}
\label{figure:Positive and Negative crossings}
\end{figure}

The relations of the braid group are algebraic expressions of geometric relationships between braided configurations of string, in that one configuration can be deformed into the other without breaking any strands or disturbing the top or bottom endpoints. (See Figs. \ref{figure:Commutativity} and \ref{figure:BraidRelation}.)

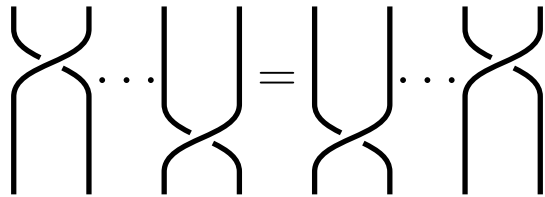
\begin{figure}[h]
\begin{center}
\begin{tikzpicture}
\braid[
 style all floors={fill=white},
 floor command={%
 \fill (\floorsx,\floorsy) rectangle (\floorex, \floorey);
 \draw (\floorsx,\floorsy) -- (\floorex,\floorsy);
 },
 line width=2pt,
 number of strands=8
] (braid)at (2,0) s_1^{-1}-s_7^{-1} s_3^{-1}-s_5^{-1} ;
\node[font=\Huge] at (5.5,-1) {\(=\)};
\node[font=\Huge] at (3.5, -1) {\(\cdots\)};
\node[font=\Huge] at (7.5, -1) {\(\cdots\)};
\end{tikzpicture}
\end{center}
\caption{Crossings between pairs of strands sufficiently distant from one another commute.}
\label{figure:Commutativity}
\end{figure}

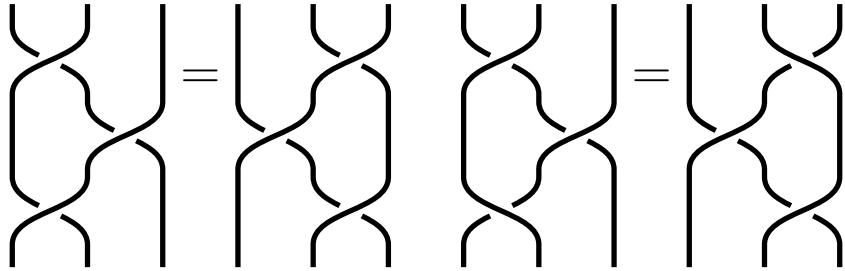
\begin{figure}[h]
\begin{center}
\begin{tikzpicture}
\braid[
 style all floors={fill=white},
 floor command={
 \fill (\floorsx,\floorsy) rectangle (\floorex, \floorey);
 \draw (\floorsx,\floorsy) -- (\floorex,\floorsy);
 },
 line width=2pt,
 number of strands=12
] (braid)at (2,0) s_1^{-1}-s_5^{-1}-s_7^{-1}-s_{11} s_2^{-1}-s_4^{-1}-s_8^{-1}-s_{10}^{-1} s_1^{-1}-s_5^{-1}-s_7-s_{11}^{-1};
\node[font=\Huge] at (4.5,-1) {\(=\)};
\node[font=\Huge] at (10.5,-1) {\(=\)};
\end{tikzpicture}
\end{center}
\caption{The braid relation(s).}
\label{figure:BraidRelation}
\end{figure}

A braid can be closed to form a knot or link by connecting the top and bottom of the $i$th strand for each $i = 1, 2, \dots, m$ in a way that does not introduce any additional crossings. This knot or link is called the \textit{closure} of the braid. If all of the generators in a braid word appear with positive exponents, then the braid is called a \emph{positive braid}. When the closure of a positive braid is a knot, the closure is a positive knot. Every knot can be expressed as the closure of a braid.

We will use a modified notation from that in \cite{DolFPSTTK}.

\begin{definition}
For $a \leq b$, let $\Pi_a^b = \sigma_b \cdot \sigma_{b-1} \cdot \cdots \cdot \sigma_{a}$.
\end{definition}

To avoid confusion with exponents, we will always write exponents outside of parentheses. In this notation, then, a torus knot, $T(p, q)$, can be represented as the closure of a positive braid on $q$ strands, $\left(\Pi^{q-1}_1\right)^p$. Further, by introducing $n$ full twists into the first parallel $r$ strands at the bottom of this braid, a twisted torus knot, $K(p, q, r, n)$, can be represented as the closure of a braid on $q$ strands, $\left( \Pi^{q-1}_1 \right)^p \left( \Pi^{r-1}_1 \right)^{rn}$.

\subsection{Heegaard splittings}
A \emph{handlebody} is a 3-manifold with boundary that is homeomorphic to a boundary connected sum of solid tori. The \emph{genus} of a handlebody is the number of solid tori, or equivalently, the genus of the boundary surface of the handlebody. Given a 3-manifold, $M$, a \emph{Heegard splitting} is decomposition of $M$ along a closed, connected, orientable surface, called a \emph{Heegaard surface}, into two handlebodies. The genus of the Heegaard surface is called the \emph{genus} of the Heegaard splitting. Every closed, compact, connected, orientable 3-manifold has a Heegard splitting. The three-sphere has a unique Heegaard splitting of each genus (see \cite{WalHZ3S}).

A twisted torus knot, $K(p, q, r, n)$, has a natural position on the genus two Heegaard surface of $S^3$, as follows. The torus knot $T(p, q)$ sits on an unknotted torus, $T$. Let $D$ be a small disk in this torus intersecting $T(p, q)$ in $r$ adjacent sub-arcs. Let $T'$ be another unknotted torus, let $K'$ be a $r$ copies of $T(1, n)$ in $T'$, and let $D'$ be a small disk in $T'$ intersecting this link in $r$ adjacent sub-arcs. Then removing $D$ from $T$, $D'$ from $T'$, and identifying the resulting boundaries, keeping orientations consistent, results in $K(p, q, r, n)$ on the genus two Heegaard surface for $S^3$. See Fig. \ref{fig:ttkonF}.

\begin{figure}[h]
\includegraphics[scale=.55]{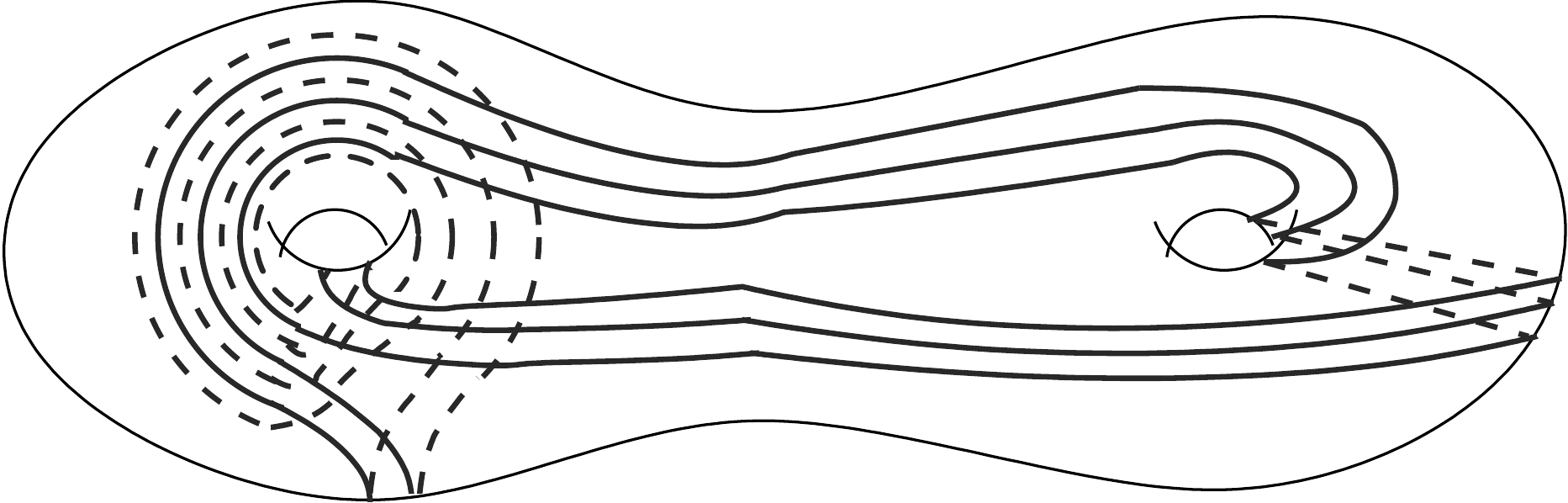}
\caption{$K(5, 2, 3, 1)$}
\label{fig:ttkonF}
\end{figure}

\section{Braids}
\label{section:Braids}
We will present two proofs of the main theorem. The approach is essentially the same in both, but our first proof will be algebraic, and our second proof will be purely geometric in nature. The two can be seen to complement one another. We will begin with several lemmas, as well as some discussion of how to interpret the statements of these lemmas in terms of braided strands. 

\begin{lemma}\label{lemma:Glesser}
If $a \leq b$, then $ \Prod{b}{a} \Prod{b+1}{a+1} \sigma_{a} ^{\pm 1} = \sigma_{b+1} ^{\pm 1} \Prod{b}{a} \Prod{b+1}{a+1}.$
\end{lemma}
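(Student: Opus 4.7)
The plan is to induct on $b-a$. For the base case $b = a$, both $\Pi_a^b$ and $\Pi_{a+1}^{b+1}$ collapse to single generators and the claimed identity becomes
$$\sigma_a \sigma_{a+1} \sigma_a^{\pm 1} = \sigma_{a+1}^{\pm 1} \sigma_a \sigma_{a+1}.$$
For the $+1$ exponent this is literally the braid relation. For the $-1$ exponent, starting from $\sigma_a \sigma_{a+1} \sigma_a = \sigma_{a+1} \sigma_a \sigma_{a+1}$ and multiplying on the right by $\sigma_a^{-1}$ and on the left by $\sigma_{a+1}^{-1}$ gives the desired form. So the base case is simply the braid relation, dressed up two ways.

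For the inductive step, I would peel off the first generator of each product: writing $\Pi_a^b = \sigma_b \Pi_a^{b-1}$ and $\Pi_{a+1}^{b+1} = \sigma_{b+1} \Pi_{a+1}^b$. Since every generator occurring in $\Pi_a^{b-1}$ has index at most $b-1$, each commutes with $\sigma_{b+1}$, hence $\Pi_a^{b-1}\sigma_{b+1} = \sigma_{b+1} \Pi_a^{b-1}$. Therefore
$$\Pi_a^b \, \Pi_{a+1}^{b+1} \, \sigma_a^{\pm 1} \;=\; \sigma_b \, \sigma_{b+1} \, \Pi_a^{b-1} \, \Pi_{a+1}^b \, \sigma_a^{\pm 1}.$$
Now apply the inductive hypothesis to $\Pi_a^{b-1} \Pi_{a+1}^b \sigma_a^{\pm 1}$ (whose index difference is one smaller) to pull a $\sigma_b^{\pm 1}$ to the front, producing
$$\sigma_b \, \sigma_{b+1} \, \sigma_b^{\pm 1} \, \Pi_a^{b-1} \, \Pi_{a+1}^b.$$
The leading three-letter subword $\sigma_b \sigma_{b+1} \sigma_b^{\pm 1}$ is precisely the base case with $a$ replaced by $b$, so it rewrites as $\sigma_{b+1}^{\pm 1} \sigma_b \sigma_{b+1}$. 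A final use of commutativity ($\sigma_{b+1}$ past $\Pi_a^{b-1}$) lets me re-collect $\sigma_b \Pi_a^{b-1} = \Pi_a^b$ and $\sigma_{b+1} \Pi_{a+1}^b = \Pi_{a+1}^{b+1}$, producing the right-hand side of the lemma.

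I do not expect significant obstacles; the proof is essentially an exercise in moving the "extra" generator of $\Pi_{a+1}^{b+1}$ across $\Pi_a^{b-1}$ via far commutativity, isolating a three-letter braid-relation pattern at one end, and applying induction to the remainder. The only thing that requires care is the index bookkeeping: verifying at each stage that the indices involved in a commutativity swap differ by at least $2$, and that the inductive hypothesis is being applied with the shift $b \mapsto b-1$, $a$ unchanged. The sign is carried uniformly through the argument because the braid-relation manipulation used in the base case produces $\sigma_{b+1}^{\pm 1}$ from $\sigma_b^{\pm 1}$ without interfering with the other (positive) factors.
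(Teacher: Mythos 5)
Your proof is correct, and it takes a genuinely different route from the paper's. The paper proves the identity by one long explicit word manipulation: it interleaves the generators of $\Prod{b+1}{a+1}$ into $\Prod{b}{a}$ using far commutativity, applies the braid relation $b-a$ times starting at the bottom with $\sigma_a\sigma_{a+1}\sigma_a^{\pm1}$ so that the signed generator cascades up to $\sigma_{b+1}^{\pm1}$, and then pushes the displaced generators back to the right to reassemble $\Prod{b}{a}\Prod{b+1}{a+1}$. You instead induct on $b-a$, peeling the top generator off each product ($\Prod{b}{a}=\sigma_b\Prod{b-1}{a}$, $\Prod{b+1}{a+1}=\sigma_{b+1}\Prod{b}{a+1}$), commuting $\sigma_{b+1}$ across $\Prod{b-1}{a}$ (legitimate, since all indices there are at most $b-1$), invoking the inductive hypothesis to produce $\sigma_b^{\pm1}$ in front, and finishing with a single base-case braid relation $\sigma_b\sigma_{b+1}\sigma_b^{\pm1}=\sigma_{b+1}^{\pm1}\sigma_b\sigma_{b+1}$ plus one more far commutation; your handling of the $-1$ exponent in the base case is also correct. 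The two arguments encode the same cascade of braid relations, but your recursive packaging eliminates the delicate bookkeeping of the fully interleaved word that occupies most of the paper's proof, at the cost of hiding the explicit intermediate expressions, which in the paper serve to make the geometric picture (a crossing sliding up along a band of strands, as in the paper's figures) visible step by step.
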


\begin{proof}
If $a = b$, then the statement is equivalent to the braid relation. Henceforth, we will assume that $a < b$.
\begin{align*}
\Pi_a ^{b} \Pi_{a+1} ^{b+1} \sigma_a ^{\pm} = \Big(\sigma_b \sigma_{b-1} \sigma_{b-2} \cdots \sigma_{a+2} \sigma_{a+1} \sigma_a \Big) \Big(\sigma_{b+1} \sigma_{b} \cdots \sigma_{a+2} \sigma_{a+1}\Big) \sigma_a ^{\pm 1}.
\end{align*}
From $\Pi_{a+1}^{b+1}$, we can move $\sigma_{b+1}$ past all of the generators to the left of it except for $\sigma_{b}$. Therefore we have
\begin{align*}
\Big[ \sigma_b \Big( \sigma_{b+1} \Big) \sigma_{b-1} \sigma_{b-2} \cdots \sigma_{a+2} \sigma_{a+1} \sigma_a \Big] \Big[ \sigma_{b} \cdots \sigma_{a+2} \sigma_{a+1} \Big] \sigma_a ^{\pm 1}. 
\end{align*}

From $\Pi_{a+1}^{b+1}$, we can then move $\sigma_{b}$ past all of the generators to the left of it except for $\sigma_{b}$ and $\sigma_{b-1}$, to obtain
\begin{align*}
\Big[ \sigma_b \Big( \sigma_{b+1} \Big) \sigma_{b-1} \Big( \sigma_{b} \Big) \sigma_{b-2} \cdots \sigma_{a+2} \sigma_{a+1} \sigma_a \Big] \Big[ \sigma_{b-1} \cdots \sigma_{a+2} \sigma_{a+1} \Big] \sigma_a ^{\pm 1}. 
\end{align*}

Moving each of the remaining generators from $\Pi_{a+1}^{b+1}$ to the left as much as possible we get
\begin{align*}
\Big[ \sigma_b \Big( \sigma_{b+1} \Big) \sigma_{b-1} \Big( \sigma_{b} \Big) \sigma_{b-2} \Big( \sigma_{b-1} \Big) \sigma_{b-3} \cdots \sigma_{a+2} \Big( \sigma_{a+3} \Big) \sigma_{a+1} \Big( \sigma_{a+2} \Big) \sigma_{a} \Big( \sigma_{a+1} \Big) \sigma_a ^{\pm 1} \Big].  
\end{align*}

We can then apply the braid relation on $\sigma_a \sigma_{a+1} \sigma_a ^{\pm 1}$ to obtain
\begin{align*}
\Big[ \sigma_b \Big( \sigma_{b+1} \Big) \sigma_{b-1} \Big( \sigma_{b} \Big) \sigma_{b-2} \Big( \sigma_{b-1} \Big) \sigma_{b-3} \cdots \sigma_{a+2} \Big( \sigma_{a+3} \Big) \sigma_{a+1} \Big( \sigma_{a+2} \Big) \Big( \sigma_{a+1} ^{\pm 1} \sigma_{a} \sigma_{a+1} \Big) \Big] . 
\end{align*}

Applying the braid relation on $\sigma_{a+1} \sigma_{a+2} \sigma_{a+1} ^{\pm 1}$ we get
\begin{align*}
\Big[ \sigma_b \Big( \sigma_{b+1} \Big) \sigma_{b-1} \Big( \sigma_{b} \Big) \sigma_{b-2} \Big( \sigma_{b-1} \Big) \sigma_{b-3} \cdots \sigma_{a+2} \Big( \sigma_{a+3} \Big) \Big( \sigma_{a+2} ^{\pm 1} \Big( \sigma_{a+1} \Big) \sigma_{a+2} \Big) \sigma_{a} \sigma_{a+1} \Big] .
\end{align*}

We can continue to apply the braid relation $b-a-2$ more times to get
\begin{align*}
\Big[ \sigma_{b+1} ^{\pm 1} \Big( \sigma_{b} \Big) \sigma_{b+1} \Big( \sigma_{b-1} \Big) \sigma_{b} \Big( \sigma_{b-2} \Big) \sigma_{b-1} \cdots \sigma_{a+3} \Big( \sigma_{a+4} \Big) \sigma_{a+2} \Big( \sigma_{a+3} \Big)  \sigma_{a+1} \Big( \sigma_{a+2} \Big) \sigma_{a} \Big( \sigma_{a+1} \Big) \Big] .
\end{align*}

Moving $\sigma_{a+2}$ to the right past $\sigma_a$ we get
\begin{align*}
\Big[ \sigma_{b+1} ^{\pm 1} \Big( \sigma_{b} \Big) \sigma_{b+1} \Big( \sigma_{b-1} \Big) \sigma_{b} \Big( \sigma_{b-2} \Big) \sigma_{b-1} \cdots \sigma_{a+3} \Big( \sigma_{a+4} \Big) \sigma_{a+2} \Big( \sigma_{a+3} \Big)  \sigma_{a+1}  \sigma_{a} \Big( \sigma_{a+2} \Big) \Big( \sigma_{a+1} \Big) \Big] . 
\end{align*}

We can then move $\sigma_{a+3}$ to the right past $\sigma_{a+1}$ and $\sigma_a$ so that
\begin{align*}
\Big[ \sigma_{b+1} ^{\pm 1} \Big( \sigma_{b} \Big) \sigma_{b+1} \Big( \sigma_{b-1} \Big) \sigma_{b} \Big( \sigma_{b-2} \Big) \sigma_{b-1} \cdots \sigma_{a+3} \Big( \sigma_{a+4} \Big) \sigma_{a+2} \sigma_{a+1}  \sigma_{a} \Big( \sigma_{a+3} \Big) \Big( \sigma_{a+2} \Big) \Big( \sigma_{a+1} \Big) \Big] .
\end{align*}

Continuing in this way, we can move one of each of the generators $\sigma_{a+4}$ through $\sigma_{b+1}$ to the right to obtain

\belowdisplayskip = -12pt
\[\sigma_{b+1} ^{\pm 1} \Prod{b}{a}\Prod{b+1}{a+1}.\]
\end{proof}

We will find it useful in specific applications to have slightly more general versions of this statement, in the forms of the following Lemmas.

\begin{lemma}\label{sigma backward}
If $i \leq b-1$ then $\Prod{a}{1} \BigProd{a+1}{2}{a+b-1}{b} \sigma_i ^{\pm 1} = \sigma_{i+a} ^{\pm 1} \Prod{a}{1}\BigProd{a+1}{2}{a+b-1}{b}.$
\end{lemma}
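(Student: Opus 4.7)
The plan is to apply Lemma~\ref{lemma:Glesser} to the specific pair of factors $\Pi_i^{i+a-1}\,\Pi_{i+1}^{i+a}$ sitting inside the big product $\Pi_1^a\,\Pi_2^{a+1}\cdots\Pi_b^{a+b-1}$, after first sliding $\sigma_i^{\pm 1}$ into position using the standard far-commutativity relations. The guiding observation is that each factor $\Pi_j^{j+a-1}$ involves only the generators $\sigma_j,\sigma_{j+1},\ldots,\sigma_{j+a-1}$, so any single generator $\sigma_\ell$ commutes with the entire factor $\Pi_j^{j+a-1}$ whenever $\ell\le j-2$ or $\ell\ge j+a+1$.

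First I would push the incoming $\sigma_i^{\pm 1}$ leftward past every trailing factor $\Pi_j^{j+a-1}$ with $j\ge i+2$. Every such factor has smallest index $j\ge i+2$, so each of these commutations is free, and the expression takes the shape
\[ \Pi_1^a\cdots\Pi_{i-1}^{i+a-2}\,\Pi_i^{i+a-1}\,\Pi_{i+1}^{i+a}\,\sigma_i^{\pm 1}\,\Pi_{i+2}^{i+a+1}\cdots\Pi_b^{a+b-1}. \]
(When $i=b-1$ this preliminary step is vacuous.)

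Next I would invoke Lemma~\ref{lemma:Glesser} on the central chunk, converting $\Pi_i^{i+a-1}\,\Pi_{i+1}^{i+a}\,\sigma_i^{\pm 1}$ into $\sigma_{i+a}^{\pm 1}\,\Pi_i^{i+a-1}\,\Pi_{i+1}^{i+a}$. Finally I would commute the newly produced $\sigma_{i+a}^{\pm 1}$ leftward past each leading factor $\Pi_j^{j+a-1}$ with $j\le i-1$; each such factor has largest generator index $j+a-1\le i+a-2$, so $\sigma_{i+a}$ commutes with it. (When $i=1$ this closing step is vacuous.) The word that results is exactly $\sigma_{i+a}^{\pm 1}\,\Pi_1^a\,\Pi_2^{a+1}\cdots\Pi_b^{a+b-1}$, as required.

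The only real obstacle is bookkeeping: one has to verify, using $1\le i\le b-1$, that the pair $\Pi_i^{i+a-1}\,\Pi_{i+1}^{i+a}$ genuinely appears as consecutive factors in the product, and that all of the sliding moves are legitimate applications of far-commutativity. Each of these reduces to the elementary index inequalities highlighted above, so no new ingredient is needed beyond Lemma~\ref{lemma:Glesser} and the commutation relations of the braid group.
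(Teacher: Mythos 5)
Your proposal is correct and follows essentially the same route as the paper's proof: commute $\sigma_i^{\pm 1}$ leftward past the trailing factors $\Pi_{i+2}^{i+a+1}\cdots\Pi_b^{a+b-1}$, apply Lemma~\ref{lemma:Glesser} to the consecutive pair $\Pi_i^{i+a-1}\Pi_{i+1}^{i+a}$, and then commute the resulting $\sigma_{i+a}^{\pm 1}$ past the leading factors $\Pi_1^{a}\cdots\Pi_{i-1}^{i+a-2}$. The index bookkeeping you outline (including the vacuous cases $i=b-1$ and $i=1$) matches the paper's argument, so no changes are needed.
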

\begin{proof}
Beginning from the expression on the left, move $\sigma_i$ to the left past as many generators as possible to get
$$\BigProd{a}{1}{a+b-1}{b} \sigma_i = \BigProd{a}{1}{a+i}{i+1} \sigma_i \BigProd{a+i+1}{i+2}{a+b-1}{b}.$$
By Lemma \ref{lemma:Glesser}, we can rewrite
$$\BigProd{a}{1}{a+i-2}{i-1} \Big( \Prod{a+i-1}{i} \Prod{a+i}{i+1} \sigma_i \Big) \BigProd{a+i+1}{i+2}{a+b-1}{b}$$ 
as
$$\BigProd{a}{1}{a+i-2}{i-1} \Big( \sigma_{i+a} \Prod{a+i-1}{i} \Prod{a+i}{i+1} \Big) \BigProd{a+i+1}{i+2}{a+b-1}{b}.$$
Since the index of $\sigma_{i+a}$ differs from all the other braid generators to its left by at least $2$, we can move $\sigma_{i+a}$ past all of the remaining generators to get
\belowdisplayskip = -12pt
\[\sigma_{i+a} \BigProd{a}{1}{a+b-1}{b}.\]

\end{proof}

In fact, we can interpret and understand the statement of Lemma \ref{sigma backward} as sliding a crossing (generator) along a group of strands that all pass over another group of strands. The result shifts the index of the generator. See Fig. \ref{figure:MovingGeneratorAlongStrands}.

\begin{figure}
\begin{center}
\begin{tikzpicture}[scale=0.75]
\braid[
 style all floors={fill=white},
 floor command={
 \fill (\floorsx,\floorsy) rectangle (\floorex, \floorey);
 \draw (\floorsx,\floorsy) -- (\floorex,\floorsy);
 },
 line width=2pt,
 number of strands=10
] (braid)at (2,0) s_2^{-1}-s_7^{-1}-s_9^{-1} s_1^{-1}-s_3^{-1}-s_6^{-1}-s_8^{-1} s_2^{-1}-s_4^{-1}-s_7^{-1}-s_9^{-1} s_3^{-1}-s_8^{-1} s_2^{-1};
\node[font=\Huge] at (6.5,-3) {\(=\)};
\draw[red,very thick] (3.5,-4.8) circle (5mm);
\draw[red,very thick] (10.5,-.7) circle (5mm);
\end{tikzpicture}
\end{center}
\caption{Moving $\sigma_i ^{\pm 1}$ over the first $r$ strands.}
\label{figure:MovingGeneratorAlongStrands}
\end{figure}
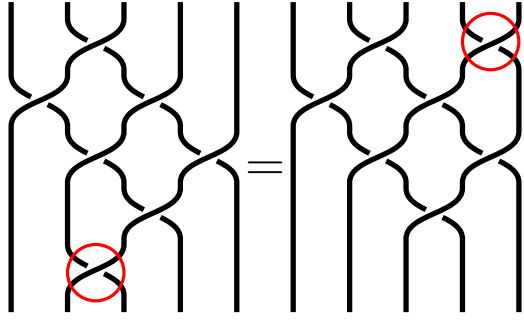

\begin{lemma}\label{lemma:ShiftBottomLeftToTopRight}
If $a \leq b$, then 
\begin{enumerate}
\item[(i)] \label{case:ShiftBottomLeftToTopRightShiftedIndices} \[ \Big(\Pi_a ^b \Pi_{a+1} ^{b+1} \cdots \Pi_{a+n-1} ^{b+n-1} \Pi_{a+n} ^{b+n} \Big) \Big(\Pi_{a} ^{a+n-1} \Big) = \Pi_{b+1} ^{b+n} \Big(\Pi_a ^b \Pi_{a+1} ^{b+1} \cdots \Pi_{a+n-2} ^{b+n-2} \Pi_{a+n-1} ^{b+n-1} \Pi_{a+n} ^{b+n} \Big), \] and 
\item[(ii)] \label{case:ShiftBottomLeftToTopRightSameIndices} \[ \Big(\Pi_a ^b \Pi_{a+1} ^{b+1} \cdots \Pi_{a+n-1} ^{b+n-1} \Pi_{a+n} ^{b+n} \Big) \Big(\Pi_{a} ^{a+n-1} \Big) = \Big(\Pi_{a} ^{b+n} \Big) \Big(\Pi_{a+1} ^{b+1} \Pi_{a+2} ^{b+2} \cdots \Pi_{a+n-1} ^{b+n-1} \Pi_{a+n} ^{b+n} \Big). \] 
\end{enumerate}
\end{lemma}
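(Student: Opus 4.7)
The plan is to deduce both parts from a single ``generator-transport'' sub-claim. Write $P_n := \Pi_a^b \Pi_{a+1}^{b+1} \cdots \Pi_{a+n}^{b+n}$ for the common large block appearing on both sides. My goal for part (i) will be to push the final factor $\Pi_a^{a+n-1} = \sigma_{a+n-1}\sigma_{a+n-2}\cdots\sigma_a$ across $P_n$ one generator at a time, converting it to $\Pi_{b+1}^{b+n} = \sigma_{b+n}\sigma_{b+n-1}\cdots\sigma_{b+1}$ on the other side. Part (ii) will then follow from (i) by the simple identity $\Pi_{b+1}^{b+n}\,\Pi_a^b = \Pi_a^{b+n}$, so the extra $\Pi_{b+1}^{b+n}$ produced by (i) absorbs into the first factor $\Pi_a^b$ of $P_n$.

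First I would establish the sub-claim: for every $j$ with $0 \le j \le n-1$,
\[
P_n\,\sigma_{a+j} \;=\; \sigma_{b+j+1}\,P_n.
\]
To prove this I split $P_n$ into three blocks: a ``left block'' $\Pi_a^b\cdots\Pi_{a+j-1}^{b+j-1}$ whose generators all have index at most $b+j-1$, a ``middle block'' $\Pi_{a+j}^{b+j}\,\Pi_{a+j+1}^{b+j+1}$ matched precisely to the hypothesis of Lemma~\ref{lemma:Glesser}, and a ``right block'' $\Pi_{a+j+2}^{b+j+2}\cdots\Pi_{a+n}^{b+n}$ whose generators all have index at least $a+j+2$. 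Since $\sigma_{a+j}$ commutes with every generator in the right block (index gap at least $2$), I slide it leftward past that entire block. Lemma~\ref{lemma:Glesser} then converts $\Pi_{a+j}^{b+j}\Pi_{a+j+1}^{b+j+1}\sigma_{a+j}$ into $\sigma_{b+j+1}\Pi_{a+j}^{b+j}\Pi_{a+j+1}^{b+j+1}$, after which $\sigma_{b+j+1}$ commutes with every generator in the left block (again index gap at least $2$) and slides off to the far left. The extreme values $j=0$ and $j=n-1$ work verbatim with one of the outer blocks empty.

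With the sub-claim in hand, part (i) becomes a telescoping computation. Applying the sub-claim successively with $j = n-1, n-2, \ldots, 0$ peels the generators of $\Pi_a^{a+n-1}$ off the right of $P_n$ and deposits $\sigma_{b+n}, \sigma_{b+n-1}, \ldots, \sigma_{b+1}$ on its left, yielding
\[
P_n\,\Pi_a^{a+n-1} \;=\; \sigma_{b+n}\sigma_{b+n-1}\cdots\sigma_{b+1}\,P_n \;=\; \Pi_{b+1}^{b+n}\,P_n,
\]
which is exactly (i). For (ii), I rewrite the leading $\Pi_{b+1}^{b+n}$ together with the first factor $\Pi_a^b$ of $P_n$ as $\Pi_a^{b+n}$; what remains of $P_n$ is $\Pi_{a+1}^{b+1}\cdots\Pi_{a+n}^{b+n}$, matching the right-hand side of (ii) exactly.

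The only real obstacle is the index bookkeeping inside the sub-claim: I must verify that no generator in the left block lies within $1$ of $b+j+1$ and that no generator in the right block lies within $1$ of $a+j$, so that the two commutation steps really do go through. Both conditions are immediate from how the blocks were defined, so I anticipate no conceptual difficulty beyond this accounting, with Lemma~\ref{lemma:Glesser} doing all of the genuinely non-trivial work.
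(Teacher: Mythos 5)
Your proposal is correct and follows essentially the same route as the paper's proof: peeling the generators of $\Pi_a^{a+n-1}$ off one at a time, commuting each past the distant factors, applying Lemma~\ref{lemma:Glesser} to the adjacent pair to convert $\sigma_{a+j}$ into $\sigma_{b+j+1}$, and then merging $\Pi_{b+1}^{b+n}\Pi_a^b = \Pi_a^{b+n}$ to obtain (ii) from (i). The only difference is organizational: you isolate the transport step as a clean sub-claim $P_n\sigma_{a+j} = \sigma_{b+j+1}P_n$ (in the spirit of Lemma~\ref{sigma backward}, as the paper itself remarks), whereas the paper carries out the same iteration inline.
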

\begin{proof} 
Let $a \leq b$. We have
$$\Big(\Pi_a ^b \Pi_{a+1} ^{b+1} \cdots \Pi_{a+n-1} ^{b+n-1} \Pi_{a+n} ^{b+n} \Big) \Big(\Pi_{a} ^{a+n-1} \Big) = \Big(\Pi_a ^b \Pi_{a+1} ^{b+1} \cdots \Pi_{a+n-1} ^{b+n-1} \Pi_{a+n} ^{b+n} \Big) \Big(\sigma_{a+n-1}\Pi_{a} ^{a+n-2} \Big).$$
By Lemma \ref{lemma:Glesser}, we can rewrite the expression as,
$$\Big(\Pi_a ^b \Pi_{a+1} ^{b+1} \cdots \Pi_{a+n-2} ^{b+n-2}\sigma_{b+n} \Pi_{a+n-1} ^{b+n-1} \Pi_{a+n} ^{b+n} \Big) \Big(\Pi_{a} ^{a+n-2}\Big).$$
By the commutativity properties of the braid group, $\sigma_{b+n}$ can be moved past all of the generators to the left of it to get,
$$\sigma_{b+n} \Big(\Pi_a ^b \Pi_{a+1} ^{b+1} \cdots \Pi_{a+n-1} ^{b+n-1} \Pi_{a+n} ^{b+n} \Big) \Big(\Pi_{a} ^{a+n-2} \Big).$$
We can now rewrite $\Pi_a ^{a+n-2}$ as $\sigma_{a+n-2} \Pi_a ^{a+n-3}$. Therefore our expression becomes
$$\sigma_{b+n}\Big(\Pi_a ^b \Pi_{a+1} ^{b+1} \cdots \Pi_{a+n-1} ^{b+n-1} \Pi_{a+n} ^{b+n} \Big) \Big(\sigma_{a+n-2} \Pi_{a} ^{a+n-3} \Big).$$
We can then move $\sigma_{a+n-2}$ to get
$$\sigma_{b+n}\Big(\Pi_a ^b \Pi_{a+1} ^{b+1} \cdots \Pi_{a+n-2} ^{b+n-2} \Pi_{a+n-1} ^{b+n-1} \sigma_{a+n-2} \Pi_{a+n} ^{b+n} \Big) \Big(\Pi_{a} ^{a+n-3} \Big).$$
By Lemma \ref{lemma:Glesser}, we can rewrite the expression as,
$$\sigma_{b+n} \Big(\Pi_a ^b \Pi_{a+1} ^{b+1} \cdots \Pi_{a+n-3}^{b+n-3} \sigma_{b+n-1} \Pi_{a+n-2} ^{b+n-2} \Pi_{a+n-1} ^{b+n-1} \Pi_{a+n} ^{b+n} \Big) \Big(\Pi_{a} ^{a+n-3} \Big).$$
Moving $\sigma_{b+n-1}$ to the left we have,
$$\sigma_{b+n} \sigma_{b+n-1} \Big(\Pi_a ^b \Pi_{a+1} ^{b+1} \cdots \Pi_{a+n-2} ^{b+n-2} \Pi_{a+n-1} ^{b+n-1} \Pi_{a+n} ^{b+n} \Big) \Big(\Pi_{a} ^{a+n-3} \Big).$$
Continuing in this way $n-2$ times, our expression becomes
\[ \Pi_{b+1} ^{b+n} \Big(\Pi_a ^b \Pi_{a+1} ^{b+1} \cdots \Pi_{a+n-2} ^{b+n-2} \Pi_{a+n-1} ^{b+n-1} \Pi_{a+n} ^{b+n} \Big) ,\]
establishing the first equation, and combining the two left-most terms results in
\[  \Pi_{a} ^{b+n} \Big(\Pi_{a+1} ^{b+1} \cdots \Pi_{a+n-2} ^{b+n-2} \Pi_{a+n-1} ^{b+n-1} \Pi_{a+n} ^{b+n} \Big), \]
which establishes the second equation.
\end{proof}

Lemma \ref{lemma:ShiftBottomLeftToTopRight} can be considered a corollary of Lemma \ref{sigma backward}, as it is the result of shifting multiple crossings up a group of strands, though the algebraic presentation is peculiar. See Fig. \ref{figure:MovingProductAlongStrands}.

\begin{figure}[h]
\begin{center}
\begin{tikzpicture}[scale=0.75]
\braid[
 style all floors={fill=white},
 floor command={
 \fill (\floorsx,\floorsy) rectangle (\floorex, \floorey);
 \draw (\floorsx,\floorsy) -- (\floorex,\floorsy);
 },
 line width=2pt,
 number of strands=10
] (braid)at (2,0) s_2^{-1}- s_9^{-1} s_1^{-1}-s_3^{-1}-s_8^{-1} s_2^{-1}-s_4^{-1}-s_7^{-1} s_3^{-1}-s_6^{-1}-s_8^{-1} s_2^{-1}-s_7^{-1}-s_9^{-1} s_1^{-1}-s_8^{-1};
\node[font=\Huge] at (6.5,-3) {\(=\)};
\draw[red,very thick] (3,-5.3) circle (12mm);
\draw[red,very thick] (10,-1.3) circle (12mm);
\end{tikzpicture}
\end{center}
\caption{Moving $\Prod{a+n-1}{a}$ over the first $r$ strands.}
\label{figure:MovingProductAlongStrands}
\end{figure}
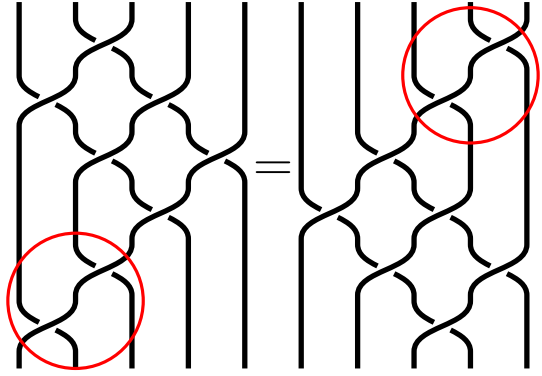

The following statement is a re-working of Lemma 3.4 from \cite{DolFPSTTK} by the second author, adapted to our different choices of convention and notation.
\begin{lemma}\label{lemma:Rework 3.4}
For $l < t \leq s$, $\Prod{s}{l} \sigma_t ^{\pm 1} = \sigma_{t-1} ^{\pm 1} \Prod{s}{l}$.
\end{lemma}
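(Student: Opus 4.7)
The plan is to commute $\sigma_t^{\pm 1}$ leftward through the word $\Prod{s}{l} = \sigma_s \sigma_{s-1} \cdots \sigma_{t+1} \sigma_t \sigma_{t-1} \sigma_{t-2} \cdots \sigma_l$ one generator at a time, exploiting the fact that $\sigma_t^{\pm 1}$ commutes with every $\sigma_i$ for which $|i - t| \geq 2$. The only generators appearing in $\Prod{s}{l}$ with which $\sigma_t^{\pm 1}$ does \emph{not} commute are $\sigma_{t-1}$ and (when $t < s$) $\sigma_{t+1}$, so all of the work happens at the single pair $\sigma_t \sigma_{t-1}$ in the middle of the word.

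First I would slide $\sigma_t^{\pm 1}$ leftward past the tail $\sigma_{t-2}\sigma_{t-3}\cdots \sigma_l$, which is legal because every index in this block is at most $t-2$. This rewrites the left-hand side as
\[
\sigma_s \sigma_{s-1} \cdots \sigma_{t+1}\, \sigma_t \sigma_{t-1} \sigma_t^{\pm 1}\, \sigma_{t-2} \cdots \sigma_l .
\]
Next I would apply the identity $\sigma_t \sigma_{t-1} \sigma_t^{\pm 1} = \sigma_{t-1}^{\pm 1} \sigma_t \sigma_{t-1}$ in the center. For the $+$ exponent this is exactly the braid relation $\sigma_t \sigma_{t-1} \sigma_t = \sigma_{t-1} \sigma_t \sigma_{t-1}$; for the $-$ exponent it follows from the braid relation by a one-line check (multiply $\sigma_t \sigma_{t-1} \sigma_t = \sigma_{t-1} \sigma_t \sigma_{t-1}$ on the right by $\sigma_t^{-1}$ and on the left by $\sigma_{t-1}^{-1}$). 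Finally I would slide the newly produced $\sigma_{t-1}^{\pm 1}$ leftward past the head $\sigma_s \sigma_{s-1} \cdots \sigma_{t+1}$, each of whose indices is at least $t+1$, so commutativity again applies at every step. What remains is $\sigma_{t-1}^{\pm 1} \sigma_s \sigma_{s-1} \cdots \sigma_l = \sigma_{t-1}^{\pm 1} \Prod{s}{l}$, as desired.

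The degenerate cases $t = l+1$ (empty tail) and $t = s$ (empty head) are handled by simply skipping the corresponding sliding step, after which the central braid identity alone finishes the argument. I do not anticipate any real obstacle: the only non-commutative move used is the single application of the braid relation at the $\sigma_t \sigma_{t-1}$ junction, and the rest is the same kind of commutativity bookkeeping that drives the proof of Lemma~\ref{lemma:Glesser}.
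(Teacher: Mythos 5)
Your proposal is correct and follows essentially the same route as the paper's proof: slide $\sigma_t^{\pm 1}$ left past the low-index tail, apply the braid-relation identity $\sigma_t \sigma_{t-1} \sigma_t^{\pm 1} = \sigma_{t-1}^{\pm 1} \sigma_t \sigma_{t-1}$ at the junction, then slide $\sigma_{t-1}^{\pm 1}$ left past the high-index head. Your explicit verification of the negative-exponent case and of the degenerate cases is a small bonus but does not change the argument.
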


\begin{proof}
Rewriting the expression in terms of braid generators we have
\begin{align*}
\Prod{s}{l} \sigma_t ^{\pm 1} 
& = \Big( \sigma_{s} \sigma_{s-1} \cdots \sigma_{l+1} \sigma_{l} \Big) \sigma_{t} ^{\pm 1}.
\end{align*}

Moving $\sigma_{t} ^{\pm 1}$ past the first $t-l-1$ generators we get
$$ \sigma_{s} \sigma_{s-1} \cdots \Big( \sigma_{t+1} \sigma_{t} \sigma_{t-1} \sigma_{t} ^{\pm 1} \Big) \sigma_{t-2} \cdots \sigma_{l}.$$
By the braid relation $\sigma_{t} \sigma_{t-1} \sigma_{t} ^{\pm 1} = \sigma_{t-1} ^{\pm 1} \sigma_{t} \sigma_{t-1}.$ Therefore, 
$$\sigma_{s} \sigma_{s-1} \cdots \sigma_{t+1} \Big( \sigma_{t} \sigma_{t-1} \sigma_{t} ^{\pm 1} \Big) \sigma_{t-2} \cdots \sigma_{l} = \sigma_{s} \sigma_{s-1} \cdots \sigma_{t+1} \Big( \sigma_{t-1} ^{\pm 1} \sigma_{t} \sigma_{t-1} \Big) \sigma_{t-2} \cdots \sigma_{l}.$$
Moving $\sigma_{t-1} ^{\pm 1}$ past the remaining $s-t$ generators, we obtain  $\sigma_{t-1} ^{\pm 1} \Prod{s}{l}.$
\end{proof}

We state here an immediate corollary of Lemma \ref{lemma:Rework 3.4} that will be a convenient form for us.

\begin{lemma}\label{sigma forward}
If $b+1 \leq i \leq a+b-1$, then $\Prod{a}{1}\BigProd{a+1}{2}{a+b-1}{b} \sigma_i ^{\pm 1} = \sigma_{i-b} ^{\pm 1} \Prod{a}{1}\BigProd{a+1}{2}{a+b-1}{b}.$
\end{lemma}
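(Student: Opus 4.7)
The plan is to slide $\sigma_i^{\pm 1}$ leftward through the product $\Prod{a}{1}\BigProd{a+1}{2}{a+b-1}{b}$ one block at a time, appealing to Lemma \ref{lemma:Rework 3.4} at every step. Observe that the product is the concatenation of the $b$ blocks $\Prod{a+j-1}{j}$ for $j = 1, 2, \ldots, b$, the rightmost being $\Prod{a+b-1}{b}$. At each application of Lemma \ref{lemma:Rework 3.4}, one block is absorbed and the index of the traveling generator is decremented by exactly one, so $b$ applications will decrement the index by $b$, yielding $\sigma_{i-b}^{\pm 1}$ on the far left.

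First I would apply Lemma \ref{lemma:Rework 3.4} to the rightmost block with $l = b$ and $s = a + b - 1$. The hypothesis $l < t \le s$ becomes $b < i \le a + b - 1$, which is exactly the standing assumption. This rewrites $\Prod{a+b-1}{b}\sigma_i^{\pm 1}$ as $\sigma_{i-1}^{\pm 1}\Prod{a+b-1}{b}$. Next I would commute $\sigma_{i-1}^{\pm 1}$ past the block $\Prod{a+b-2}{b-1}$ using Lemma \ref{lemma:Rework 3.4} with $l = b - 1$, $s = a + b - 2$, $t = i - 1$, and so on.

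More generally, at the $k$th step (for $k = 1, 2, \ldots, b$) one commutes $\sigma_{i-k+1}^{\pm 1}$ past $\Prod{a+b-k}{b-k+1}$, invoking Lemma \ref{lemma:Rework 3.4} with $l = b - k + 1$, $s = a + b - k$, and $t = i - k + 1$. The hypothesis $l < t \le s$ then simplifies uniformly in $k$ to the pair of inequalities $b < i$ and $i \le a + b - 1$, which is precisely the assumption $b + 1 \le i \le a + b - 1$. Hence every application is legal, and after $b$ steps the generator has migrated past the entire product with its index shifted down by $b$.

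There is essentially no obstacle, since the hypothesis of Lemma \ref{lemma:Rework 3.4} survives unchanged at each stage because the indices of the block and of the generator decrease in lockstep. The bookkeeping of verifying that the inequality is preserved uniformly is the only step deserving attention, and it reduces to the original hypothesis on $i$. This is why the statement is presented as an immediate corollary.
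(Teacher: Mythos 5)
Your proof is correct and follows the paper's argument exactly: the paper likewise applies Lemma \ref{lemma:Rework 3.4} a total of $b$ times, once per block, observing that after each application the shifted index still satisfies the hypothesis relative to the next block to the left. Your explicit bookkeeping at the $k$th step is simply a fuller writing-out of the paper's one-line verification.
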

\begin{proof}
Apply Lemma \ref{lemma:Rework 3.4} a total of $b$ times, noting that after each application, the new index of the generator will be between the top and bottom indices of the term immediately to its left.
\end{proof}

It is worth noting that Lemma \ref{sigma forward} is analogous to Lemma \ref{sigma backward}, only instead of sliding a crossing along a group of strands that run entirely \emph{above} another group of strands, it is sliding along a group running \emph{below} other strands.

The following Lemma is an interesting observation that a full twist can be factored into sub-twists, wherein the last $q-r$ strands wrap completely around the first $r$ strands, followed by a full twist within each of the groups of $q-r$ and $r$ strands, respectively. This is best appreciated by examining Fig. \ref{figure:FullTwistFactorization}.

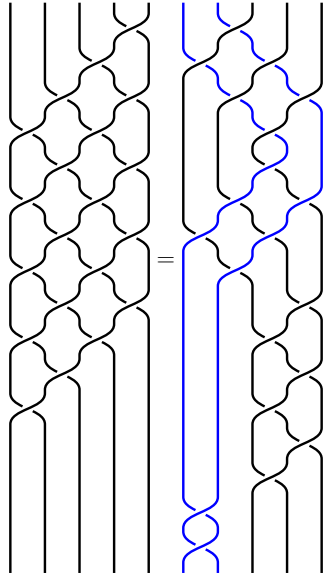
\begin{figure}[h]
\begin{center}
\begin{adjustbox}{max totalsize={3in}{3in},center}
\begin{tikzpicture}
\braid[
 style all floors={fill=white},
 floor command={
 \fill (\floorsx,\floorsy) rectangle (\floorex, \floorey);
 \draw (\floorsx,\floorsy) -- (\floorex,\floorsy);
 },
 line width=2pt,
 number of strands=10
 style strands={1}{},
 style strands={6}{blue},
 style strands={7}{blue},
] (braid) at (2,0) s_4^{-1}-s_7^{-1} 
s_3^{-1}-s_6^{-1}-s_8^{-1} 
s_2^{-1}-s_4^{-1}-s_7^{-1}-s_9^{-1} 
s_1^{-1}-s_3^{-1}-s_8^{-1} 
s_4^{-1}-s_2^{-1}-s_8^{-1}
s_3^{-1}-s_1^{-1}-s_7^{-1}-s_9^{-1} 
s_2^{-1}-s_6^{-1}-s_8^{-1}-s_4^{-1} 
s_1^{-1}-s_3^{-1}-s_7^{-1}
s_2^{-1}-s_4^{-1}-s_9^{-1} 
s_1^{-1}-s_3^{-1}-s_8^{-1} 
s_2^{-1}-s_9^{-1} 
s_1^{-1}-s_8^{-1} 
s_9^{-1} s_8^{-1}
s_6^{-1}
s_6^{-1};
\node[font=\Huge] at (6.5,-7.5) {\(=\)};

\end{tikzpicture}

\end{adjustbox}
\end{center}
\caption{Alternative factorization of one full twist.}
\label{figure:FullTwistFactorization}
\end{figure}

\begin{lemma}\label{full twist}
$\ProdPow{q-1}{1}{q} = \Big( \BigProd{r}{1}{q-1}{q-r} \Big) \Big( \BigProd{q-r}{1}{q-1}{r} \Big)  \ProdPow{q-1}{r+1}{q-r} \ProdPow{r-1}{1}{r}.$
\end{lemma}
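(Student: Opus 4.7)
The identity encodes a geometric decomposition of a full twist on $q$ strands, viewed as two interacting blocks: a block $A$ consisting of the first $r$ strands and a block $B$ consisting of the last $q-r$ strands. A $2\pi$ rotation of all $q$ strands factors as (i) a half-rotation exchanging the positions of $A$ and $B$, with $B$ passing over $A$---this is $\BigProd{r}{1}{q-1}{q-r}$; (ii) a second half-rotation returning the blocks to their original positions, with the currently-rear block passing over the currently-front block---this is $\BigProd{q-r}{1}{q-1}{r}$; and (iii) full twists internal to each block, namely $\ProdPow{q-1}{r+1}{q-r}$ on $B$ and $\ProdPow{r-1}{1}{r}$ on $A$, which supply the internal twisting that the block-level half-rotations alone do not produce.

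My plan is to prove the identity algebraically by induction on $r$, with $q$ fixed. For the base case $r = 1$, the factor $\ProdPow{0}{1}{1}$ is an empty product, and the claim reduces to
\[
(\Pi_1^{q-1})^{q-1} = (\sigma_1 \sigma_2 \cdots \sigma_{q-1}) \cdot (\Pi_2^{q-1})^{q-1}.
\]
I would verify this directly by repeated application of Lemma \ref{lemma:Rework 3.4} to $(\Pi_1^{q-1})^{q-1} = \Pi_1^{q-1} \cdot (\Pi_1^{q-1})^{q-2}$, extracting the trailing $\sigma_1$ from each copy of $\Pi_1^{q-1}$ and migrating it leftward; each such migration shifts the index upward by one, so the extracted generators accumulate as $\sigma_1 \sigma_2 \cdots \sigma_{q-1}$ at the front, while the remaining $q-1$ truncated factors combine to give $(\Pi_2^{q-1})^{q-1}$.

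For the inductive step, I assume the identity holds for $r$ and aim to establish it for $r + 1$. Starting from the RHS for $r+1$, I would peel off the leading factor $\Pi_1^{r+1}$ of $\BigProd{r+1}{1}{q-1}{q-r-1}$ and use Lemma \ref{lemma:ShiftBottomLeftToTopRight} to push it rightward across $\BigProd{q-r-1}{1}{q-1}{r+1}$, producing an extra factor on the left and a truncated shifted product on the right. Further applications of Lemma \ref{sigma backward} and Lemma \ref{sigma forward}, combined with regrouping against $\ProdPow{q-1}{r+2}{q-r-1}$ and $\ProdPow{r}{1}{r+1}$, should transform the expression into the RHS for $r$, which by the inductive hypothesis equals the LHS.

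The main obstacle is the combinatorial bookkeeping in the inductive step: the shifted products $\BigProd{r}{1}{q-1}{q-r}$ and $\BigProd{q-r}{1}{q-1}{r}$ have highly interrelated index ranges, and the index shifts introduced by each application of Lemma \ref{lemma:ShiftBottomLeftToTopRight} must line up precisely with the factors of $\ProdPow{q-1}{r+1}{q-r}$ and $\ProdPow{r-1}{1}{r}$. A viable alternative is a direct geometric proof---drawing the braid diagrams of both sides and exhibiting an explicit isotopy between them, guided by the block-twist picture described in the first paragraph---trading algebraic complexity for a visual argument that may be more transparent to some readers.
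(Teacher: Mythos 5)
Your induction does not get off the ground, because the base case as you state it is false. For $r=1$ the lemma reads $\ProdPow{q-1}{1}{q} = \big(\sigma_1\sigma_2\cdots\sigma_{q-1}\big)\,\Prod{q-1}{1}\,\ProdPow{q-1}{2}{q-1}$, with the copy of $\Prod{q-1}{1}$ sitting in the \emph{middle} of the right-hand side; cancelling it against one of the $q$ copies on the left is not legitimate in a noncommutative group. Indeed, the identity you propose to verify, $\ProdPow{q-1}{1}{q-1} = (\sigma_1\cdots\sigma_{q-1})\,\ProdPow{q-1}{2}{q-1}$, already fails at $q=3$: it asserts $\sigma_2\sigma_1\sigma_2\sigma_1 = \sigma_1\sigma_2^{3}$, which after one braid relation is equivalent to $\sigma_1^{2} = \sigma_2^{2}$, false in $B_3$ (the two sides have distinct images under $\sigma_1\mapsto\left(\begin{smallmatrix}1&1\\0&1\end{smallmatrix}\right)$, $\sigma_2\mapsto\left(\begin{smallmatrix}1&0\\-1&1\end{smallmatrix}\right)$). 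A valid reduction would first have to cyclically permute the right-hand side, which is only justified because the full twist is central, and it yields the factors in the \emph{opposite} order, $\ProdPow{q-1}{2}{q-1}(\sigma_1\cdots\sigma_{q-1}) = \ProdPow{q-1}{1}{q-1}$; the order matters precisely because $\ProdPow{q-1}{1}{q-1}$ is not central. The mechanism you describe is also mismatched to the available lemmas: Lemma \ref{lemma:Rework 3.4} moves a generator leftward past $\Prod{s}{l}$ only when its index exceeds $l$, and it \emph{lowers} the index, so it cannot walk a trailing $\sigma_1$ leftward past powers of $\Prod{q-1}{1}$ with indices rising; the index-raising moves (Lemmas \ref{lemma:Glesser} and \ref{sigma backward}) require the staggered two-block form $\Prod{b}{a}\Prod{b+1}{a+1}$, which a power of $\Prod{q-1}{1}$ does not have.

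The inductive step, which is where all the content of the lemma lives, is left as a plan: you say that after peeling off $\Prod{r+1}{1}$ the expression ``should'' reorganize into the right-hand side for $r$ via Lemmas \ref{lemma:ShiftBottomLeftToTopRight}, \ref{sigma backward} and \ref{sigma forward}, and you name the index bookkeeping as the main obstacle without resolving it; that bookkeeping is exactly what a proof must supply, and it is not clear the step closes as described. For comparison, the paper does not induct on $r$ at all: starting from the right-hand side, it applies Lemma \ref{lemma:Rework 3.4} repeatedly to pass $\ProdPow{q-1}{r+1}{q-r}$ leftward through $\BigProd{q-r}{1}{q-1}{r}$, converting it into $\ProdPow{q-r-1}{1}{q-r}$, and then uses Lemma \ref{lemma:ShiftBottomLeftToTopRight} to collapse $\Big(\BigProd{r}{1}{q-1}{q-r}\Big)\ProdPow{q-r-1}{1}{q-r}$ into $\ProdPow{q-1}{1}{q-r}$ and, by the same reasoning, $\Big(\BigProd{q-r}{1}{q-1}{r}\Big)\ProdPow{r-1}{1}{r}$ into $\ProdPow{q-1}{1}{r}$, giving $\ProdPow{q-1}{1}{q}$. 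Finally, the fallback of ``exhibiting an explicit isotopy'' between the two braid diagrams is not a proof unless the isotopy is actually produced; the paper offers the picture (its figure of the factorization) only as intuition and carries out the argument algebraically.
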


\begin{proof}
We begin with the expression on the right,
$$\Big(\Pi_{1} ^r \Pi_2 ^{r+1} \cdots \Pi_{q-r} ^{q-1} \Big) \Big(\Pi_{1} ^{q-r} \Pi_{2} ^{q-r+1} \cdots \Pi_r ^{q-1} \Big) \Big(\Pi_{r+1} ^{q-1} \Big)^{q-r} \Big(\Pi_1 ^{r-1} \Big)^r.$$
We first explore how the terms in $(\Pi_{1} ^{q-r} \Pi_{2} ^{q-r+1} \cdots \Pi_r ^{q-1})$ and $\Pi_{r+1} ^{q-1}$ interact with each other. Rewriting $\Pi_{r+1} ^{q-1}$ as $\sigma_{q-1}\Pi_{r+1} ^{q-2}$ we have
$$\Big(\Pi_{1} ^{q-r} \Pi_{2} ^{q-r+1} \cdots \Pi_r ^{q-1} \Big) \Pi_{r+1} ^{q-1} = \Big(\Pi_{1} ^{q-r} \Pi_{2} ^{q-r+1} \cdots \Pi_r ^{q-1} \Big)\sigma_{q-1} \Pi_{r+1} ^{q-2}.$$
By Lemma \ref{lemma:Rework 3.4} we have,
$$\Big(\Pi_{1} ^{q-r} \Pi_{2} ^{q-r+1} \cdots \Pi_r ^{q-1} \Big)\sigma_{q-1} \Pi_{r+1} ^{q-2} = \Pi_{1} ^{q-r} \Pi_{2} ^{q-r+1} \cdots \Pi_{r-1} ^{q-2} \Big( \sigma_{q-2} \Pi_r ^{q-1} \Big) \Pi_{r+1} ^{q-2}.$$
Applying Lemma \ref{lemma:Rework 3.4} an additional $r-1$ times we get,
\begin{eqnarray*}
\Pi_{1} ^{q-r} \Pi_{2} ^{q-r+1} \cdots \Pi_{r-1} ^{q-2} \Big( \sigma_{q-2} \Pi_r ^{q-1} \Big) \Pi_{r+1} ^{q-2} & = & \Pi_{1} ^{q-r} \Pi_{2} ^{q-r+1} \cdots \Pi_{r-2} ^{q-3} \Big( \sigma_{q-3} \Pi_{r-1} ^{q-2} \Big)  \Pi_r ^{q-1}\Pi_{r+1} ^{q-2} \\
& = & \Pi_{1} ^{q-r} \Pi_{2} ^{q-r+1} \cdots  \Pi_{r-3} ^{q-4} \Big( \sigma_{q-4} \Pi_{r-2} ^{q-3} \Big) \Pi_{r-1} ^{q-2}  \Pi_r ^{q-1}\Pi_{r+1} ^{q-2} \\
& \vdots & \\
& = & \sigma_{q-r-1} \Pi_{1} ^{q-r} \Pi_{2} ^{q-r+1} \cdots  \Pi_{r-3} ^{q-4} \Pi_{r-2} ^{q-3} \Pi_{r-1} ^{q-2}  \Pi_r ^{q-1}\Pi_{r+1} ^{q-2}.
\end{eqnarray*} 

We can now rewrite $\Prod{q-2}{r+1}$ as $\sigma_{q-2} \Prod{q-3}{r+1}$ to get
$$\sigma_{q-r-1} \Pi_{1} ^{q-r} \Pi_{2} ^{q-r+1} \cdots  \Pi_r ^{q-1}\Pi_{r+1} ^{q-2} = \Big( \sigma_{q-r-1} \Pi_{1} ^{q-r} \Pi_{2} ^{q-r+1} \cdots  \Pi_{r-1} ^{q-2}  \Pi_r ^{q-1} \Big) \sigma_{q-2} \Pi_{r+1} ^{q-3} . $$

As before, $r$ applications of Lemma \ref{lemma:Rework 3.4} gives
\begin{eqnarray*}
\Big( \sigma_{q-r-1} \Pi_{1} ^{q-r} \Pi_{2} ^{q-r+1} \cdots    \Pi_r ^{q-1} \Big) \sigma_{q-2} \Pi_{r+1} ^{q-3} & = & \sigma_{q-r-1} \Pi_{1} ^{q-r}  \cdots  \Pi_{r-1} ^{q-2}  \Big( \sigma_{q-3} \Pi_r ^{q-1} \Big) \Pi_{r+1} ^{q-3} \\
& = &  \sigma_{q-r-1} \Pi_{1} ^{q-r} \cdots  \Pi_{r-2} ^{q-3} \Big( \sigma_{q-3} \Pi_{r-1} ^{q-2} \Big) \Pi_r ^{q-1} \Pi_{r+1} ^{q-3}\\
& \vdots & \\
& = & \sigma_{q-r-1} \sigma_{q-r-2} \Pi_{1} ^{q-r} \cdots  \Pi_{r-1} ^{q-2}  \Pi_r ^{q-1}\Pi_{r+1} ^{q-3}.
\end{eqnarray*}

Each of the remaining generators of $\Pi_{r+1} ^{q-3}$ can be moved over in precisely this way, so we will obtain 
$\Big( \sigma_{q-r-1} \sigma_{q-r-2} \cdots \sigma_{2} \sigma_{1} \Big) \Pi_{1} ^{q-r} \Pi_{2} ^{q-r+1} \cdots \Pi_{r-1} ^{q-2}  \Pi_r ^{q-1}$,
which can then be simplified to $\Prod{q-r-1}{1}\BigProd{q-r}{1}{q-1}{q-r}.$
We can now use this entire relation on the $q-r$ copies of the expression $\Pi_{r+1} ^{q-1}$ to get
$$\Pi_{1} ^{q-r} \Pi_{2} ^{q-r+1} \cdots \Pi_{r-1} ^{q-2}  \Pi_r ^{q-1} \ProdPow{q-1}{r+1}{q-r} = \ProdPow{q-r-1}{1}{q-r} \Pi_{1} ^{q-r} \Pi_{2} ^{q-r+1} \cdots \Pi_{r-1} ^{q-2}  \Pi_r ^{q-1}.$$
Therefore, we can express 
our original braid word as
$$\Big(\Pi_{1} ^r \Pi_2 ^{r+1} \cdots \Pi_{q-r} ^{q-1} \Big) \Big(\Pi_{1} ^{q-r-1} \Big)^{q-r} \Big(\Pi_{1} ^{q-r} \Pi_{2} ^{q-r+1} \cdots \Pi_r ^{q-1} \Big) \Big(\Pi_1 ^{r-1} \Big)^r.$$

We next look at the interaction of the terms in $(\Pi_{1} ^r \Pi_2 ^{r+1} \cdots \Pi_{q-r} ^{q-1})$ and $\Big( \Pi_{1} ^{q-r-1} \Big)^{q-r}$. Let's first peel off one copy of $\Big( \Pi_{1} ^{q-r-1} \Big)$, so
\[
\Big(\Pi_{1} ^r \Pi_2 ^{r+1} \cdots \Pi_{q-r} ^{q-1} \Big) \Big( \Pi_{1} ^{q-r-1} \Big)^{q-r}  =  \Big(\Pi_{1} ^r \Pi_2 ^{r+1} \cdots \Pi_{q-r} ^{q-1} \Big)\Big( \Pi_{1} ^{q-r-1} \Big) \Big( \Pi_{1} ^{q-r-1} \Big)^{q-r-1}.
\]

We can now apply Lemma \ref{lemma:ShiftBottomLeftToTopRight}, (\ref{case:ShiftBottomLeftToTopRightShiftedIndices}) to get,
\[\Big(\Pi_{1} ^r \Pi_2 ^{r+1} \cdots \Pi_{q-r} ^{q-1} \Big)\Big( \Pi_{1} ^{q-r-1} \Big) \Big( \Pi_{1} ^{q-r-1} \Big)^{q-r-1} = \Pi_{1} ^{q-1} \Big( \Pi_2 ^{r+1} \cdots \Pi_{q-r} ^{q-1} \Big) \Big( \Pi_{1} ^{q-r-1} \Big)^{q-r-1}.\]
In fact, taking $a = 2$, $b = r + 1$, and $n = q-r-2$ 
in Lemma \ref{lemma:ShiftBottomLeftToTopRight}, (\ref{case:ShiftBottomLeftToTopRightShiftedIndices}), we now get
\begin{align*}
\Pi_{1} ^{q-1} \Big( \Pi_2 ^{r+1} \cdots \Pi_{q-r} ^{q-1} \Big) \Big( \Pi_{1} ^{q-r-1} \Big)^{q-r-1} & = & \Pi_{1} ^{q-1} \Big( \Pi_2 ^{r+1} \cdots \Pi_{q-r} ^{q-1} \Big) \Big( \Pi_{1} ^{q-r-1} \Big) \Big( \Pi_{1} ^{q-r-1} \Big)^{q-r-2} \\
& = & \Big(\Pi_{1} ^{q-1} \Big) \Big( \Pi_2 ^{r+1} \cdots \Pi_{q-r} ^{q-1} \Big) \Big(\Prod{q-r-1}{2} \sigma_{1} \Big) \Big(\Pi_{1} ^{q-r-1} \Big)^{q-r-2} \\
& = & \Big(\Pi_{1} ^{q-1} \Pi_2 ^{q-1} \Big) \Big( \Prod{r+2}{3} \cdots \Pi_{q-r} ^{q-1} \Big) \Big(\sigma_{1} \Big) \Big(\Pi_{1} ^{q-r-1} \Big)^{q-r-2} \\
& = & \Big(\Pi_{1} ^{q-1} \Pi_2 ^{q-1} \Big) \Big( \Prod{r+2}{3} \cdots \Pi_{q-r} ^{q-1} \Big) \Big(\sigma_{1} \Big) \Big(\Pi_{1} ^{q-r-1} \Big)^{q-r-2}.
\end{align*}

We can then use commutative properties of the braid group to move $\sigma_{1}$ to the left to get
$$\Big(\Pi_{1} ^{q-1} \Pi_2 ^{q-1} \sigma_{1} \Big) \Big( \Prod{r+2}{3} \cdots \Pi_{q-r} ^{q-1} \Big) \Big(\Pi_{1} ^{q-r-1} \Big)^{q-r-2} = \Big(\Pi_{1} ^{q-1} \Big)^{2} \Big( \Pi_{3} ^{r+2} \cdots \Pi_{q-r} ^{q-1} \Big) \Big(\Pi_{1} ^{q-r-1} \Big)^{q-r-2}.$$
Continuing in this way $(q-r-2)$ more times, we can express the word on the right as
$(\Pi_{1} ^{q-1})^{q-r}.$
By similar reasoning, we can express $(\Pi_{1} ^{q-r} \Pi_{2} ^{q-r+1} \cdots \Pi_r ^{q-1})(\Pi_1 ^{r-1})^r$ as 
$(\Pi_{1} ^{q-1})^r.$
Therefore,
\belowdisplayskip = -12pt
\begin{align*}
\Big(\Pi_{1} ^r \Pi_2 ^{r+1} \cdots \Pi_{q-r} ^{q-1} \Big) \Big(\Pi_{1} ^{q-r} \Pi_{2} ^{q-r+1} \cdots \Pi_r ^{q-1} \Big) \Big(\Pi_{q-r} ^{q-1} \Big)^{q-r} \Big(\Pi_1 ^{r-1} \Big)^r &= \Big(\Pi_{1} ^{q-1} \Big)^{q-r} \Big(\Pi_{1} ^{q-1} \Big)^{r}\\
&= \Big(\Pi_1 ^{q-1} \Big)^q. 
\end{align*}
\end{proof}

The heart of our proof and the ability to introduce \emph{additional} cases of positive twisted torus knots beyond those discovered in \cite{DolFPSTTK} is the following Lemma. While \cite{DolFPSTTK} uses positive full twists in the torus knot to cancel negative twists among a subset of strands, if there are not enough of these positive full twists, then not all of the negative crossings may be eliminated. However, if there are enough \emph{partial} twists in the positive direction remaining from the torus knot, then some cancellations may still be possible. See Fig. \ref{figure:CancelingNegativeRTwists}.

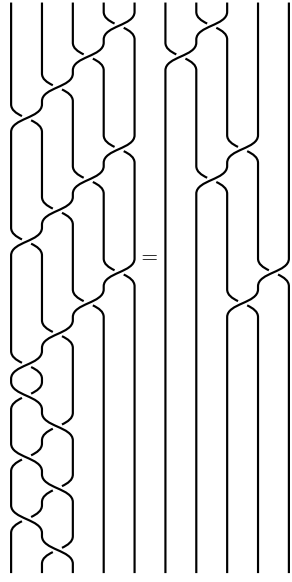
\begin{figure}[h]
\begin{center}
\begin{adjustbox}{max totalsize={3in}{3in},center}
\begin{tikzpicture}
\braid[
 style all floors={fill=white},
 floor command={
 \fill (\floorsx,\floorsy) rectangle (\floorex, \floorey);
 \draw (\floorsx,\floorsy) -- (\floorex,\floorsy);
 },
 line width=2pt,
 number of strands=10
] (braid)at (2,0) s_4^{-1}-s_7^{-1} s_3^{-1}-s_6^{-1} s_2^{-1} s_1^{-1} s_4^{-1}-s_8^{-1} s_3^{-1}-s_7^{-1} s_2^{-1} s_1^{-1} s_4^{-1}-s_9^{-1} s_3^{-1}-s_8^{-1} s_2^{-1} s_1^{-1} s_1 s_2 s_1 s_2 s_1 s_2;
\node[font=\Huge] at (6.5,-8.3) {\(=\)};
\end{tikzpicture}
\end{adjustbox}
\end{center}
\caption{The negative full twists on $r$ strands at the bottom can be systematically undone using the positive crossings above, even though they do not constitute even one full twist of the $q$ strands.}
\label{figure:CancelingNegativeRTwists}
\end{figure}

\begin{lemma}\label{pi relation}
If $r < q$, then $\ProdPow{q-1}{1}{r} \ProdPow{r-1}{1}{-r} = \Prod{q-r}{1} \Prod{q-r+1}{2} \cdots \Prod{q-2}{r-1} \Prod{q-1}{r}$.

\end{lemma}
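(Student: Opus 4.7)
The plan is to establish the identity by iterating a single sliding move. The auxiliary identity I would prove first is: for every $0 \le m \le r-1$,
\[ (\Pi_1^{q-1})^{m+1} (\Pi_1^{r-1})^{-1} = \Pi_{r-m}^{q-m-1} (\Pi_1^{q-1})^{m}. \]
Geometrically this says that a single partial negative twist on the first $r$ strands, sitting immediately to the right of $m+1$ cascade blocks $\Pi_1^{q-1}$, can be absorbed: one full cascade is spent, and in return a shorter product $\Pi_{r-m}^{q-m-1}$ appears on the far left.

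The auxiliary identity is proved by induction on $m$. The base case $m = 0$ is immediate from the splitting $\Pi_1^{q-1} = \Pi_r^{q-1} \cdot \Pi_1^{r-1}$, which gives $\Pi_1^{q-1} (\Pi_1^{r-1})^{-1} = \Pi_r^{q-1}$. For the inductive step, the hypothesis rewrites $(\Pi_1^{q-1})^m (\Pi_1^{r-1})^{-1}$ as $\Pi_{r-m+1}^{q-m} (\Pi_1^{q-1})^{m-1}$, and the remaining task is to commute a single cascade $\Pi_1^{q-1}$ past $\Pi_{r-m+1}^{q-m}$. This reduces to the sub-identity $\Pi_1^{q-1} \Pi_a^b = \Pi_{a-1}^{b-1} \Pi_1^{q-1}$ for $2 \le a \le b \le q-1$, which follows by iterating Lemma \ref{lemma:Rework 3.4} once for each generator of $\Pi_a^b$.

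With the auxiliary identity in hand, the main statement is obtained by peeling off partial negative twists one at a time from the right of $(\Pi_1^{q-1})^r (\Pi_1^{r-1})^{-r}$. At the $k$-th step ($k = 1, 2, \ldots, r$), apply the auxiliary identity with $m = r - k$ to the subword $(\Pi_1^{q-1})^{r-k+1} (\Pi_1^{r-1})^{-1}$; this pulls a factor $\Pi_k^{q-r+k-1}$ out to the left, to join the accumulating product, and reduces the remaining tail to $(\Pi_1^{q-1})^{r-k} (\Pi_1^{r-1})^{-(r-k)}$. After $r$ such reductions both the positive cascades and the negative twists are entirely consumed, and the accumulated product on the left is exactly $\Pi_1^{q-r} \Pi_2^{q-r+1} \cdots \Pi_r^{q-1}$.

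The hardest part is arguably the initial insight of formulating the auxiliary identity in just the right form, so that one application at a time generates exactly one factor of the desired parallelogram $\Pi_1^{q-r}\Pi_2^{q-r+1}\cdots\Pi_r^{q-1}$. Once that is set up, what is left is careful bookkeeping: each invocation of the auxiliary identity, and of the sub-identity within its inductive step, respects the constraints $2 \le a$ and $b \le q-1$ precisely because of the standing hypothesis $r < q$ together with the ranges $0 \le m \le r-1$ and $1 \le k \le r$ that arise in the iteration. No braid manipulations beyond Lemma \ref{lemma:Rework 3.4} and the elementary splitting $\Pi_1^{q-1} = \Pi_r^{q-1} \Pi_1^{r-1}$ are required.
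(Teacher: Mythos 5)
Your argument is correct: the base case is the elementary splitting $\Pi_1^{q-1} = \Pi_r^{q-1}\Pi_1^{r-1}$, the sub-identity $\Pi_1^{q-1}\Pi_a^b = \Pi_{a-1}^{b-1}\Pi_1^{q-1}$ for $2 \le a \le b \le q-1$ is exactly iterated Lemma \ref{lemma:Rework 3.4} with $l=1$, $s=q-1$, and the index constraints you need ($r-m+1 \ge 2$, $q-m \le q-1$, and $r-m+1 \le q-m$) are indeed guaranteed by $1 \le m \le r-1$ and $r<q$, so the auxiliary identity and the final iteration both go through. Your route differs from the paper's in its mechanics, though the underlying strategy (each negative partial twist on $r$ strands is absorbed by one positive cascade, yielding one factor of $\Pi_1^{q-r}\Pi_2^{q-r+1}\cdots\Pi_r^{q-1}$) is the same. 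The paper peels one cascade from the left and one negative twist from the right at each stage, cancels the low-index negative generators directly, and then must eliminate the residual high-index generators ($\sigma_{r-1}^{-1}$, $\sigma_{r-2}^{-1}$, \dots) by pushing them through the already-accumulated middle blocks with repeated appeals to Lemma \ref{lemma:Glesser}; its intermediate normal form keeps the accumulated factors sandwiched between the remaining cascades and the remaining negative twists, and the target product is built from its right-most factor $\Pi_r^{q-1}$ backwards. You instead cancel each negative twist completely against the cascade adjacent to it and then transport the resulting positive block $\Pi_r^{q-1}$ leftward through the remaining cascades via Lemma \ref{lemma:Rework 3.4}, so the accumulated factors exit at the far left and the product is built from $\Pi_1^{q-r}$ forwards. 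The payoff of your version is a cleaner single induction that avoids Lemma \ref{lemma:Glesser} entirely and uses only Lemma \ref{lemma:Rework 3.4} plus the trivial splitting; the paper's version, while messier here, keeps Lemma \ref{lemma:Glesser} as the uniform workhorse it also uses elsewhere (e.g.\ in Lemmas \ref{sigma backward}, \ref{lemma:ShiftBottomLeftToTopRight}, and \ref{full twist}).
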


\begin{proof}
By peeling off one term from each of the exponentiated terms, and cancelling some of the positive and negative generators we find that
\begin{align*}
\ProdPow{q-1}{1}{r} \ProdPow{r-1}{1}{-r} & = & \ProdPow{q-1}{1}{r-1} \Prod{q-1}{1} \ProdPow{r-1}{1}{-1}  \ProdPow{r-1}{1}{-r+1} \\
& = &  \ProdPow{q-1}{1}{r-1} \Prod{q-1}{r}   \ProdPow{r-1}{1}{-r+1}. 
\end{align*}
Once again, peeling one term from the left and right terms, we get
\begin{align*}
 \ProdPow{q-1}{1}{r-2} \ProdPow{q-1}{1}{}   \Prod{q-1}{r}  \ProdPow{r-1}{1}{-1}   \ProdPow{r-1}{1}{-r+2}.
\end{align*}
Express $\ProdPow{r-1}{1}{-1}$ as $\ProdPow{r-2}{1}{-1} \sigma_{r-1} ^{-1}$, and note that the indices of $\ProdPow{r-2}{1}{-1}$ are at least 2 lower than all of the generators of $\Prod{q-1}{r}$. Then we can move them all to the left and cancel them with the generators in $\Prod{q-1}{1}$ to get
\begin{align*}
\ProdPow{q-1}{1}{r-2} \ProdPow{q-1}{1}{}   \Prod{q-1}{r}  \ProdPow{r-1}{1}{-1}   \ProdPow{r-1}{1}{-r+2} & = \\ 
\ProdPow{q-1}{1}{r-2} \ProdPow{q-1}{1}{}   \Prod{q-1}{r}  \ProdPow{r-2}{1}{-1} \sigma_{r-1} ^{-1}   \ProdPow{r-1}{1}{-r+2} & = \\
\ProdPow{q-1}{1}{r-2} \ProdPow{q-1}{r-1}{}   \Prod{q-1}{r}  \sigma_{r-1} ^{-1}   \ProdPow{r-1}{1}{-r+2}.
\end{align*}
If we express $\Prod{q-1}{r-1}$ as $\sigma_{q-1}\Prod{q-2}{r-1}$ we can then apply Lemma \ref{lemma:Glesser} to get
\begin{align*}
\ProdPow{q-1}{1}{r-2} \ProdPow{q-1}{r-1}{}   \Prod{q-1}{r}  \sigma_{r-1} ^{-1}   \ProdPow{r-1}{1}{-r+2} & = \\
\ProdPow{q-1}{1}{r-2} \sigma_{q-1}\ProdPow{q-2}{r-1}{}   \Prod{q-1}{r}  \sigma_{r-1} ^{-1}   \ProdPow{r-1}{1}{-r+2} & = \\
\ProdPow{q-1}{1}{r-2}  \sigma_{q-1} \sigma_{q-1}^{-1} \ProdPow{q-2}{r-1}{}   \Prod{q-1}{r}  \ProdPow{r-1}{1}{-r+2} & = \\ 
\ProdPow{q-1}{1}{r-2} \ProdPow{q-2}{r-1}{}   \Prod{q-1}{r}  \ProdPow{r-1}{1}{-r+2}.
\end{align*}
Once again, peeling off a term from the left and right, we obtain
\begin{align*}
\ProdPow{q-1}{1}{r-3} \ProdPow{q-1}{1}{} \Prod{q-2}{r-1}   \Prod{q-1}{r} \ProdPow{r-1}{1}{-1} \ProdPow{r-1}{1}{-r+3}.
\end{align*}
Express $\ProdPow{r-1}{1}{-1}$ as $\ProdPow{r-3}{1}{-1} \sigma_{r-2} ^{-1} \sigma_{r-1} ^{-1}$, and note that the indices of $\ProdPow{r-3}{1}{-1}$ are at least 2 lower than all of the generators of $\Prod{q-2}{r-1} \Prod{q-1}{r}$. We can move them all to the left and cancel them with the generators in $\Prod{q-1}{1}$ to get
\begin{align*}
\ProdPow{q-1}{1}{r-3} \ProdPow{q-1}{1}{} \Prod{q-2}{r-1}   \Prod{q-1}{r} \ProdPow{r-1}{1}{-1} \ProdPow{r-1}{1}{-r+3} & = \\
\ProdPow{q-1}{1}{r-3} \ProdPow{q-1}{1}{} \Prod{q-2}{r-1}   \Prod{q-1}{r} \ProdPow{r-3}{1}{-1} \sigma_{r-2} ^{-1} \sigma_{r-1} ^{-1} \ProdPow{r-1}{1}{-r+3} & = \\
\ProdPow{q-1}{1}{r-3} \ProdPow{q-1}{r-2}{} \Prod{q-2}{r-1}   \Prod{q-1}{r} \sigma_{r-2} ^{-1} \sigma_{r-1} ^{-1} \ProdPow{r-1}{1}{-r+3}.
\end{align*}
Since the index of $\sigma_{r-2} ^{-1}$ is 2 lower than all of the generators of $\Prod{q-1}{r}$, we can move $\sigma_{r-2}$ to the left to get
\begin{align*}
\ProdPow{q-1}{1}{r-3} \ProdPow{q-1}{r-2}{} \Prod{q-2}{r-1} \Big( \sigma_{r-2} ^{-1} \Big) \Prod{q-1}{r}  \sigma_{r-1} ^{-1} \ProdPow{r-1}{1}{-r+3}.
\end{align*}
Rewriting $\Prod{q-1}{r-2}$ as $\sigma_{q-1} \sigma_{q-2} \Prod{q-3}{r-2}$ we can apply Lemma \ref{lemma:Glesser} to $\ProdPow{q-3}{r-2}{} \Prod{q-2}{r-1} \sigma_{r-2} ^{-1}$ to get
\begin{align*}
\ProdPow{q-1}{1}{r-3} \Prod{q-1}{r-2} \Prod{q-2}{r-1} \Big( \sigma_{r-2} ^{-1} \Big) \Prod{q-1}{r}  \sigma_{r-1} ^{-1} \ProdPow{r-1}{1}{-r+3} & = \\
\ProdPow{q-1}{1}{r-3} \sigma_{q-1} \sigma_{q-2} \Big( \Prod{q-3}{r-2} \Prod{q-2}{r-1} \sigma_{r-2} ^{-1} \Big) \Prod{q-1}{r}  \sigma_{r-1} ^{-1} \ProdPow{r-1}{1}{-r+3} & = \\
\ProdPow{q-1}{1}{r-3} \sigma_{q-1} \sigma_{q-2} \Big( \sigma_{q-2} ^{-1} \Big) \Prod{q-3}{r-2} \Prod{q-2}{r-1}  \Prod{q-1}{r}  \sigma_{r-1} ^{-1} \ProdPow{r-1}{1}{-r+3} & =\\
\ProdPow{q-1}{1}{r-3} \sigma_{q-1} \Prod{q-3}{r-2} \Prod{q-2}{r-1}  \Prod{q-1}{r}  \sigma_{r-1} ^{-1} \ProdPow{r-1}{1}{-r+3}.
\end{align*}
Now we can apply Lemma \ref{lemma:Glesser} to $\Prod{q-2}{r-1} \Prod{q-1}{r} \sigma_{r-1} ^{-1}$ to obtain
\begin{align*}
\ProdPow{q-1}{1}{r-3} \sigma_{q-1} \Prod{q-3}{r-2} \Big( \sigma_{q-1} ^{-1} \Big) \Prod{q-2}{r-1}  \Prod{q-1}{r} \ProdPow{r-1}{1}{-r+3}.
\end{align*}
Moving $\sigma_{q-1} ^{-1}$ past $\Prod{q-3}{r-2}$, our expression becomes
\begin{align*}
\ProdPow{q-1}{1}{r-3} \sigma_{q-1} \Big( \sigma_{q-1} ^{-1} \Big) \Prod{q-3}{r-2} \Prod{q-2}{r-1}  \Prod{q-1}{r} \ProdPow{r-1}{1}{-r+3} & = \\
\ProdPow{q-1}{1}{r-3} \Prod{q-3}{r-2} \Prod{q-2}{r-1}  \Prod{q-1}{r} \ProdPow{r-1}{1}{-r+3}
\end{align*}
Continuing in this way $r-3$ more times we have, $\Prod{q-r}{1} \Prod{q-r+1}{2} \cdots \Prod{q-2}{r-1} \Prod{q-1}{r}$.

\end{proof}

We are now is a position to prove our Main Theorem.

\MainTheorem*

One particularly nice property of knots formed as the closure of a positive braid is that such knots are known to be \emph{fibered}, by work of Stallings \cite{StaCFKL}. A knot $K$ in $S^{3}$ is fibered if $S^{3} - K$ is homeomorphic to $(F \times I)/f$, where $F$ is the interior of a Seifert surface for $K$ and the map $f:F \times \left\{ 0 \right\} \rightarrow F \times \left\{ 1 \right\}$ is a homeomorphism.

\begin{corollary}
\label{corollary:Fibered}
The knots in Theorem \ref{theorem:MainTheorem} are fibered.
\end{corollary}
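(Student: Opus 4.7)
The plan is to invoke Stallings' classical theorem directly. In the proof of Theorem \ref{theorem:MainTheorem}, we will not merely show that $K(p,q,r,-n)$ admits \emph{some} diagram with only positive crossings; we will in fact rewrite the standard braid word
\[
\left(\Pi^{q-1}_1\right)^p \left(\Pi^{r-1}_1\right)^{-rn}
\]
for $K(p,q,r,-n)$ as a word in the braid group $B_q$ using only positive generators $\sigma_1,\dots,\sigma_{q-1}$. Since this word and the original represent conjugate (in fact, equal) elements of $B_q$, their closures are the same knot. Thus, under the hypotheses on $(p,q,r,n)$, $K(p,q,r,-n)$ is realized as the closure of a positive braid, not merely as a positive knot in the diagrammatic sense.

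Given this, the corollary is essentially immediate. By the theorem of Stallings \cite{StaCFKL}, the closure of any positive braid in $B_m$ is a fibered link in $S^{3}$, with fiber surface obtained from Seifert's algorithm applied to the positive braid closure. Applying this to the positive braid produced in the proof of Theorem \ref{theorem:MainTheorem} yields that $K(p,q,r,-n)$ is fibered whenever $n < k+1$, or $n = k+1$ and $r \le e$.

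There is really no obstacle to overcome here beyond Theorem \ref{theorem:MainTheorem} itself: the only thing one must be careful about is verifying that the output of the braid-theoretic simplification is genuinely a word in the positive generators (and not just a diagram with positive crossings after projection), so that Stallings' hypothesis is satisfied. This is automatic from the form of the argument, since the simplification proceeds by braid-group identities producing an expression of the form $\Prod{q-r}{1} \Prod{q-r+1}{2} \cdots \Prod{q-1}{r}$ (together with the surrounding positive torus-knot factors), each symbol of which is a positive generator. The corollary then follows by citing \cite{StaCFKL}.
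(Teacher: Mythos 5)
Your proposal is correct and takes essentially the same approach as the paper: the proof of Theorem \ref{theorem:MainTheorem} exhibits $K(p,q,r,-n)$ as the closure of a genuinely positive braid word in $B_q$, and the corollary follows at once from Stallings' theorem \cite{StaCFKL} that closures of positive braids are fibered. No further argument is needed.
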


\begin{proof}[First Proof of Theorem \ref{theorem:MainTheorem}] If $n \leq k$, then $nq \leq kq < p$, and the result was proven in \cite{DolFPSTTK}. We limit our proof, then, to the case that $n = k+1$.

The twisted torus knot $K(p,q,r,-(k+1))$ can be written as the closure of the braid word
\begin{align*}
\ProdPow{q-1}{1}{p} \ProdPow{r-1}{1}{-(k+1)r} =  \ProdPow{q-1}{1}{kq+e} \ProdPow{r-1}{1}{-(k+1)r} =\ProdPow{q-1}{1}{e} \left [ \ProdPow{q-1}{1}{q} \right ]^{k}  \ProdPow{r-1}{1}{-(k+1)r}.
\end{align*}
Peeling one term off of $\Big[ \Big( \Prod{q-1}{1} \Big)^{q} \Big]^{k}$ we can then apply Lemma \ref{full twist} to write the braid word as,
$$\ProdPow{q-1}{1}{e} \left [ \ProdPow{q-1}{1}{q} \right ]^{k-1} \Big( \Big( \BigProd{r}{1}{q-1}{q-r} \Big)\Big( \BigProd{q-r}{1}{q-1}{r} \Big)\Big( \Prod{q-1}{r+1}\Big)^{q-r} \ProdPow{r-1}{1}{r}\Big) \ProdPow{r-1}{1}{-(k+1)r}$$
which simplifies to 
$$\ProdPow{q-1}{1}{e} \left [ \ProdPow{q-1}{1}{q} \right ]^{k-1} \Big( \BigProd{r}{1}{q-1}{q-r} \Big)\Big( \BigProd{q-r}{1}{q-1}{r} \Big)\Big( \Prod{q-1}{r+1}\Big)^{q-r} \ProdPow{r-1}{1}{-(k+1)r+r}.$$
We will move the term $\ProdPow{r-1}{1}{-(k+1)r+r}$ from the right, through the next three terms on the left, tracking the effect on the indices of the generators in the product as we do so.

First, the indices of the term on the far right are at least two lower than any of the indices in the term to its immediate left, so these two terms commute. We can then use Lemma \ref{lemma:ShiftBottomLeftToTopRight} (\ref{case:ShiftBottomLeftToTopRightSameIndices}), to move this term through the next term to the left, shifting its indices up by $(q-r)$. Our expression becomes
$$\ProdPow{q-1}{1}{e} \left [ \ProdPow{q-1}{1}{q} \right ]^{k-1} \Big( \BigProd{r}{1}{q-1}{q-r} \Big) \Big( \Prod{q-1}{q-r+1} \Big)^{-(k+1)r+r} \Big( \BigProd{q-r}{1}{q-1}{r} \Big)\Big( \Prod{q-1}{r+1}\Big)^{q-r}.$$
Now, the indices are such that we can apply Lemma \ref{sigma forward} to move this term through the next term to the left, shifting its indices down by $(q-r)$. The net effect of all of these moves creates the braid word
\[\ProdPow{q-1}{1}{e} \left [ \ProdPow{q-1}{1}{q} \right ]^{k-1} \Big( \Prod{r-1}{1} \Big)^{-(k+1)r+r} \Big( \BigProd{r}{1}{q-1}{q-r} \Big) \Big( \BigProd{q-r}{1}{q-1}{r} \Big)\Big( \Prod{q-1}{r+1}\Big)^{q-r}.\]

Now, let $\alpha = \Big( \BigProd{r}{1}{q-1}{q-r} \Big) \Big( \BigProd{q-r}{1}{q-1}{r} \Big)\Big( \Prod{q-1}{r+1}\Big)^{q-r} $. By Lemma \ref{full twist}, we can factor another single full twist from $\Big[\ProdPow{q-1}{1}{q} \Big]^{k-1}$ to write the braid word as

\[ \ProdPow{q-1}{1}{e} \left [ \ProdPow{q-1}{1}{q} \right ]^{k-1} \ProdPow{r-1}{1}{-(k+1)r+r} \alpha \]
\[ = \ProdPow{q-1}{1}{e} \left [ \ProdPow{q-1}{1}{q} \right ]^{k-2} \Big( \Big( \BigProd{r}{1}{q-1}{q-r} \Big)\Big( \BigProd{q-r}{1}{q-1}{r} \Big)\Big( \Prod{q-1}{r+1}\Big)^{q-r} \ProdPow{r-1}{1}{r}\Big) \ProdPow{r-1}{1}{-(k+1)r+r} \alpha \]
\[ = \ProdPow{q-1}{1}{e} \left [ \ProdPow{q-1}{1}{q} \right ]^{k-2} \Big( \Big( \BigProd{r}{1}{q-1}{q-r} \Big)\Big( \BigProd{q-r}{1}{q-1}{r} \Big)\Big( \Prod{q-1}{r+1}\Big)^{q-r} \Big) \ProdPow{r-1}{1}{-(k+1)r+2r} \alpha. \]

By Lemmas \ref{lemma:ShiftBottomLeftToTopRight} (\ref{case:ShiftBottomLeftToTopRightSameIndices}) and then \ref{sigma forward}, as above, our expression becomes

\[\ProdPow{q-1}{1}{e} \left [ \ProdPow{q-1}{1}{q} \right ]^{k-2} \Big( \Big( \BigProd{r}{1}{q-1}{q-r} \Big)\Big( \BigProd{q-r}{1}{q-1}{r} \Big)\Big( \Prod{q-1}{r+1}\Big)^{q-r} \Big) \ProdPow{r-1}{1}{-(k+1)r+2r} \alpha \]
\[ = \ProdPow{q-1}{1}{e} \left [ \ProdPow{q-1}{1}{q} \right ]^{k-2}  \ProdPow{r-1}{1}{-(k+1)r+2r} \Big( \Big( \BigProd{r}{1}{q-1}{q-r} \Big)\Big( \BigProd{q-r}{1}{q-1}{r} \Big)\Big( \Prod{q-1}{r+1}\Big)^{q-r} \Big) \alpha \]
\[ = \ProdPow{q-1}{1}{e} \left [ \ProdPow{q-1}{1}{q} \right ]^{k-2} \ProdPow{r-1}{1}{-(k+1)r+2r} \alpha ^{2}.\]

After repeating this process $k-2$ more times we end up with
$$\ProdPow{q-1}{1}{e} \ProdPow{r-1}{1}{-(k+1)r+kr} \alpha^{k},$$
which simplifies to
$$\ProdPow{q-1}{1}{e} \ProdPow{r-1}{1}{-r} \alpha^{k}.$$
Since $e \geq r$, by Lemma \ref{pi relation} we finally arrive at
$$\ProdPow{q-1}{1}{e-r} \Big( \Prod{q-r}{1} \BigProd{q-r+1}{2}{q-2}{r-1} \Prod{q-1}{r} \Big) \Big[ \Big( \BigProd{r}{1}{q-1}{r+1} \Big) \Big( \BigProd{q-r}{1}{q-1}{r} \Big)\Big( \Prod{q-1}{r+1}\Big)^{q-r} \Big]^{k}$$
which is evidently positive.
\end{proof}

For a more directly geometric interpretation, we also offer the following proof.
\begin{proof}[Second Proof of Theorem \ref{theorem:MainTheorem}]
We will refer to a \emph{partial twist} (on $q$ strands) in a braid as a portion of the braid in which one strand crosses once over every other strand, corresponding to the word $\Pi^{q-1}_{1}$. The torus knot $T(p,q)$, then, can be considered as the closure of a braid in $B_q$ with $p$ partial twists. We consider the twisted torus knot  $K(p,q,r,n)$ as the closure of a braid in $B_q$ by adding $n$ full twists to $r$ adjacent strands in the braid representing $T(p,q)$. The choice of $r$ adjacent strands does not change the knot, so we use the first $r$ strands for ease of computation. For simplicity, the braid is drawn as a link surgery diagram, as shown in Fig. \ref{figure: og braid}. Here, a box with integer $m$ represents the braid word $\left( \Pi^{q-1}_1 \right)^m$. Since $p = kq + e$, this braid will have $e$ partial twists and $k$ full twists on all $q$ strands followed by $-n$ full twists on the first $r$ strands. Thus, we have Fig. \ref{figure: all link surgery}.

For any braid in $B_q$ and $r<q$, a full twist on all $q$ strands can be represented instead by a braid where the first $r$ strands pass over and then under the remaining $q-r$ strands and the groups of the first $r$ strands and the remaining $q-r$ strands each have a full twist on that group of strands. The braid theoretic statement of this is given in Lemma \ref{full twist}.

In $K(p,q,r,-n)$, since there are $k$ full twists on all $q$ strands, we can transform each full twist to obtain Fig. \ref{figure: all twists}. Since the first $r$ strands and the remaining $q-r$ strands stay in the same positions through a full twist, the link components indicating full twists can be pushed to the top of the chain of twists between the group of $r$ strands and the group of $q-r$ strands. The link surgeries on the first $r$ strands are combined into a surgery on a single component to obtain Fig. \ref{figure: surgery pushed up}.

In Fig. \ref{figure: surgery pushed up}, all crossings are positive except, possibly, in the region where $1/(n-k)$ surgery on a link component is indicated, so we may consider only the parts of the diagram above the dashed line. If $n \leq k$, then there are already no negative twists in this diagram, and we have recovered a proof of Theorem \ref{theorem:OriginalDoleshal}. If $n = k + 1$, then since $r \leq e$, we may use Lemma \ref{pi relation} to redraw the portion above the dashed line as Fig. \ref{figure: top of braid}. Now we have a braid with all positive crossings. 
\end{proof}

\begin{figure}
\begin{center}
\begin{tikzpicture}
\draw (0,0) -- (0,1);
\draw (.25,0) -- (.25,1);
\draw (.5,0) -- (.5,1);
\node[draw=none] at (.9,.5) {$\cdots$};
\draw (1.25, 0) -- (1.25,1);
\draw (1.5, 0) -- (1.5,1);
\draw (1.75, 0) -- (1.75,1);
\node[draw=none] at (.9,.5) {$\cdots$};
\node[draw=none] at (2.15,.5) {$\cdots$};
\draw (2.5,0) -- (2.5,1);
\draw (2.75,0) --(2.75,1);
\node[draw=none] at (1.4,-.5) {$kq + e$};
\draw (-.1, -1) rectangle (2.85, 0); 
\draw (0, -1) -- (0,-1.5);
\draw (0.25, -1) -- (0.25,-1.5);
\draw (.5,-1) -- (.5,-1.5);
\draw (1.75,-1) --(1.75,-2.5);
\node[draw=none] at (.9,-1.25) {$\cdots$};
\draw (1.25, -1) -- (1.25,-1.5);
\draw (1.5,-1.65) -- (1.5,-2.5);
\draw (2.5,-1) -- (2.5,-2.5);
\draw (2.75,-1) --(2.75,-2.5);
\draw (1.5,-1) -- (1.5,-1.5);
\draw (0,-1.65) -- (0,-2.5);
\draw (.25,-1.7) -- (.25,-2.5);
\draw (.5,-1.7) -- (.5,-2.5);
\node[draw=none] at (.9,-2.1) {$\cdots$};
\draw (1.25,-1.7) -- (1.25,-2.5);
\node[draw=none] at (2.15,-1.75) {$\cdots$};
\draw[very thick] (-.15,-1.5) arc (180:360:.9 and 0.15);
\begin{pgfonlayer}{bg}
\draw[very thick] (1.65,-1.5) arc (0:30:.9 and 0.15);
\draw[very thick] (-.035,-1.43) arc (150:180:.9 and 0.15);
\end{pgfonlayer}
\node[draw=none] at (-.55,-1.5) {$\frac{1}{n}$};
\end{tikzpicture}
\caption{$K(p,q,r,-n)$}
\label{figure: og braid}
\end{center}
\end{figure}

\begin{figure}
\begin{center}
\begin{tikzpicture}
\draw (0,0) -- (0,1);
\draw (.25,0) -- (.25,1);
\draw (.5,0) -- (.5,1);
\node[draw=none] at (.9,.5) {$\cdots$};
\draw (1.25, 0) -- (1.25,1);
\draw (1.5, 0) -- (1.5,1);
\draw (1.75, 0) -- (1.75,1);
\node[draw=none] at (.9,.5) {$\cdots$};
\node[draw=none] at (2.15,.5) {$\cdots$};
\draw (2.5,0) -- (2.5,1);
\draw (2.75,0) --(2.75,1);
\node[draw=none] at (1.4,-.5) {$e$};
\draw (-.1, -1) rectangle (2.85, 0); 
\draw (0, -1) -- (0,-1.5);
\draw (0.25, -1) -- (0.25,-1.5);
\draw (.5,-1) -- (.5,-1.5);
\node[draw=none] at (.9,-1.25) {$\cdots$};
\draw (1.25, -1) -- (1.25,-1.5);
\draw (1.5,-1) -- (1.5,-1.5);
\draw (1.75,-1) --(1.75,-1.5);
\draw (2.5,-1) -- (2.5,-1.5);
\draw (2.75,-1) --(2.75,-1.5);
\draw[very thick] (-.175, -1.5) arc (180:360:1.55 and 0.1);
\begin{pgfonlayer}{bg}
\draw[very thick] (2.94,-1.5) arc (0:30:.9 and 0.12);
\draw[very thick] (-.03,-1.43) arc (150:180:.9 and 0.15);
\end{pgfonlayer}
\node[draw=none] at (-.55,-1.5) {$-\frac{1}{k}$};

\node[draw=none] at (.9,-1.86) {$\cdots$};
\node[draw=none] at (2.15,-1.25) {$\cdots$};
\draw (0, -1.65) -- (0,-2);
\draw (0.25, -1.7) -- (0.25,-2);
\draw (.5,-1.72) -- (.5,-2);
\draw (1.25, -1.72) -- (1.25,-2);
\draw (1.5,-1.72) -- (1.5,-2);
\draw (1.75,-1.72) --(1.75,-2.5);
\draw (2.5,-1.7) -- (2.5,-2.5);
\draw (2.75,-1.65) --(2.75,-2.5);
\draw[very thick] (-.15,-2) arc (180:360:.9 and 0.1);
\begin{pgfonlayer}{bg}
\draw[very thick] (1.65,-2) arc (0:30:.9 and 0.12);
\draw[very thick] (-.035,-1.95) arc (150:180:.9 and 0.12);
\end{pgfonlayer}
\node[draw=none] at (-.65,-2) {$\frac{1}{n}$};
\draw (0, -2.2) -- (0,-2.5);
\draw (0.25, -2.2) -- (0.25,-2.5);
\draw (.5, -2.2) -- (.5,-2.5);
\node[draw=none] at (.9,-2.35) {$\cdots$};
\draw (1.25, -2.2) -- (1.25, -2.5);
\draw (1.5, -2.2) -- (1.5, -2.5);
\node[draw=none] at (2.15,-2.075) {$\cdots$};
\end{tikzpicture}
\caption{$K(p,q,r,-n)$}
\label{figure: all link surgery}
\end{center}
\end{figure}

\begin{figure}
\begin{center}
\begin{tikzpicture}
\draw (0,0) -- (0,1);
\draw (.25,0) -- (.25,1);
\draw (.5,0) -- (.5,1);
\node[draw=none] at (.9,.5) {$\cdots$};
\draw (1.25, 0) -- (1.25,1);
\draw (1.5, 0) -- (1.5,1);
\draw (1.75, 0) -- (1.75,1);
\node[draw=none] at (.9,.5) {$\cdots$};
\node[draw=none] at (2.15,.5) {$\cdots$};
\draw (2.5,0) -- (2.5,1);
\draw (2.75,0) --(2.75,1);
\node[draw=none] at (1.4,-.5) {$e$};
\draw (-.1, -1) rectangle (2.85, 0); 

\node[draw=none] at (2.15,-1.25) {$\cdots$};
\draw plot [smooth, tension=1] coordinates { (0, -1) (.05, -1.75) (.25, -2.3)};
\draw plot [smooth, tension=1] coordinates { (0.25, -1) (.3, -1.75) (.5, -2.2)};
\draw plot [smooth, tension=1] coordinates { (.5, -1) (.55, -1.75) (.75, -2.1)};
\node[draw=none] at (.9,-1.25) {$\cdots$};
\draw plot [smooth, tension=1] coordinates { (1.25, -1) (1.3, -1.65) (1.45, -1.9)};
\draw plot [smooth, tension=1] coordinates { (1.5, -1) (1.51, -1.5) (1.55, -1.7)};
\draw plot [smooth, tension=.4] coordinates { (1.75,-1) (1.5,-2) (.25,-2.5)  (0,-3)(.6, -3.55)};
\node[draw=none] at (.5,-2.75) {$\cdots$};
\draw plot [smooth, tension=.4] coordinates { (2.5,-1) (2.25,-2) (1,-2.5)  (.75,-3) (.95,-3.2)};
\draw plot [smooth, tension=.4] coordinates { (2.75,-1) (2.5,-2) (1.25,-2.5)  (1,-3)(1.05, -3.1)};
\draw plot [smooth, tension=.5] coordinates {(1.15, -2.8) (1.25,-3)(.1,-4.3) (0,-4.5)};
\draw plot [smooth, tension=.5] coordinates {(1.25, -2.55) (1.5,-3)(.35,-4.3)(.25,-4.5)};
\draw plot [smooth, tension=.5] coordinates {(1.5, -2.5) (1.75,-3)(.6, -4.3)(.5,-4.5)};
\node[draw=none] at (2.15,-2.75) {$\cdots$};
\draw plot [smooth, tension=.5] coordinates {(2.25, -2.25)(2.5,-3)(1.35,-4.3)(1.25,-4.5)};
\draw plot [smooth, tension=.5] coordinates {(2.5, -2.1) (2.75,-3)(1.6,-4.3)(1.5, -4.5)};
\draw plot [smooth, tension=.5] coordinates {(1.7, -4.35) (1.8,-4.4) (1.85, -4.5)};
\node[draw=none] at (2.15,-4.25) {$\cdots$};
\draw plot [smooth, tension=1] coordinates {(2.35, -3.75) (2.425,-4) (2.5, -4.5)};
\draw plot [smooth, tension=1] coordinates {(2.5, -3.5) (2.625,-3.9) (2.75, -4.5)};
\begin{pgfonlayer}{bg}
\draw[very thick] (-.15,-4.5) arc (180:360:.9 and 0.1);
\draw[very thick] (1.65,-4.5) arc (0:30:.9 and 0.12);
\draw[very thick] (-.035,-4.45) arc (150:180:.9 and 0.12);
\node[draw=none] at (-.5,-4.5) {$-1$};
\draw[very thick] (1.75,-4.5) arc (180:360:.55 and 0.1);
\draw[very thick] (2.85,-4.5) arc (0:30:.55 and 0.12);
\draw[very thick] (1.8,-4.45) arc (150:180:.25 and 0.12);
\end{pgfonlayer}
\node[draw=none] at (3.25,-4.5) {$-1$};
\draw (0,-4.65) -- (0, -4.85);
\draw (.25,-4.65) -- (.25, -4.85);
\draw (.25,-4.65) -- (.25, -4.85);
\draw (.5,-4.65) -- (.5, -4.85);
\draw (1.25,-4.65) -- (1.25, -4.85);
\draw (1.5,-4.65) -- (1.5, -4.85);
\draw (1.85,-4.65) -- (1.85, -4.85);
\draw (2.5,-4.65) -- (2.5, -4.85);
\draw (2.75,-4.65) -- (2.75, -4.85);
\node[draw=none] at (1.5,-5.1) {$\vdots$};

\draw plot [smooth, tension=1] coordinates { (0, -5.5) (.05, -6.25) (.25, -6.8)};
\draw plot [smooth, tension=1] coordinates { (0.25, -5.5) (.3, -6.25) (.5, -6.7)};
\draw plot [smooth, tension=1] coordinates { (.5, -5.5) (.55, -6.25) (.75, -6.6)};
\node[draw=none] at (.9,-5.75) {$\cdots$};
\draw plot [smooth, tension=1] coordinates { (1.25, -5.5) (1.3, -6.15) (1.45, -6.4)};
\draw plot [smooth, tension=1] coordinates { (1.5, -5.5) (1.51, -6) (1.55, -6.2)};
\draw plot [smooth, tension=.4] coordinates { (1.75,-5.5) (1.5,-6.5) (.25,-7)  (0,-7.5)(.6, -8.05)};
\node[draw=none] at (.5,-7.25) {$\cdots$};
\draw plot [smooth, tension=.4] coordinates { (2.5,-5.5) (2.25,-6.5) (1,-7)  (.75,-7.5) (.95,-7.7)};
\draw plot [smooth, tension=.4] coordinates { (2.75,-5.5) (2.5,-6.5) (1.25,-7)  (1,-7.5)(1.05, -7.6)};
\draw plot [smooth, tension=.5] coordinates {(1.15, -7.3) (1.25,-7.5)(.1,-8.8) (0,-9)};
\draw plot [smooth, tension=.5] coordinates {(1.25, -7.05) (1.5,-7.5)(.35,-8.8)(.25,-9)};
\draw plot [smooth, tension=.5] coordinates {(1.5, -7) (1.75,-7.5)(.6, -8.8)(.5,-9)};
\node[draw=none] at (2.15,-7.25) {$\cdots$};
\draw plot [smooth, tension=.5] coordinates {(2.25, -6.75)(2.5,-7.5)(1.35,-8.8)(1.25,-9)};
\draw plot [smooth, tension=.5] coordinates {(2.5, -6.6) (2.75,-7.5)(1.6,-8.8)(1.5, -9)};
\draw plot [smooth, tension=.5] coordinates {(1.7, -8.85) (1.8,-8.9) (1.85, -9)};
\node[draw=none] at (2.15,-8.75) {$\cdots$};
\draw plot [smooth, tension=1] coordinates {(2.35, -8.25) (2.425,-8.5) (2.5, -9)};
\draw plot [smooth, tension=1] coordinates {(2.5, -8) (2.625,-8.4) (2.75, -9)};
\begin{pgfonlayer}{bg}
\draw[very thick] (-.15,-9) arc (180:360:.9 and 0.1);
\draw[very thick] (1.65,-9) arc (0:30:.9 and 0.12);
\draw[very thick] (-.035,-8.95) arc (150:180:.9 and 0.12);
\node[draw=none] at (-.5,-9) {$-1$};
\draw[very thick] (1.75,-9) arc (180:360:.55 and 0.1);
\draw[very thick] (2.85,-9) arc (0:30:.55 and 0.12);
\draw[very thick] (1.8,-8.95) arc (150:180:.25 and 0.12);
\end{pgfonlayer}
\node[draw=none] at (3.25,-9) {$-1$};
\draw (0,-9.15) -- (0, -9.35);
\draw (.25,-9.15) -- (.25, -9.35);
\draw (0.5,-9.15) -- (.5, -9.35);
\draw (1.25,-9.15) -- (1.25, -9.35);
\draw (1.5,-9.15) -- (1.5, -9.35);
\draw (1.85,-9.15) -- (1.85, -9.75);
\draw (2.5,-9.15) -- (2.5, -9.75);
\draw (2.75,-9.15) -- (2.75, -9.75);
\begin{pgfonlayer}{bg}
\draw[very thick] (-.15,-9.35) arc (180:360:.9 and 0.1);
\draw[very thick] (1.65,-9.35) arc (0:30:.9 and 0.12);
\draw[very thick] (-.035,-9.3) arc (150:180:.9 and 0.12);
\end{pgfonlayer}
\node[draw=none] at (-.55,-9.4) {$\frac{1}{n}$};
\draw (0,-9.55) -- (0, -9.75);
\draw (.25,-9.55) -- (.25, -9.75);
\draw (0.5,-9.55) -- (.5, -9.75);
\draw (1.25,-9.55) -- (1.25, -9.75);
\draw (1.5,-9.55) -- (1.5, -9.75);
\end{tikzpicture}
\caption{$K(p,q,r,-n)$}
\label{figure: all twists}
\end{center}
\end{figure}

\begin{figure}
\begin{center}
\begin{tikzpicture}
\draw (0,2) -- (0,1);
\draw (.25,2) -- (.25,1);
\draw (.5,2) -- (.5,1);
\draw (1.25, 2) -- (1.25,1);
\draw (1.5, 2) -- (1.5,1);
\draw (1.75, 2) -- (1.75,1);
\node[draw=none] at (.9,1.5) {$\cdots$};
\node[draw=none] at (2.15,1.5) {$\cdots$};
\draw (2.5,2) -- (2.5,1);
\draw (2.75,2) --(2.75,1);
\node[draw=none] at (1.4,.5) {$e$};
\draw (-.1, 0) rectangle (2.85, 1); 
\draw (0,0) -- (0,-.25);
\draw (.25,0) -- (.25,-.25);
\draw (.5,0) -- (.5,-.25);
\node[draw=none] at (.9,-.125) {$\cdots$};
\draw (1.25, 0) -- (1.25,-.25);
\draw (1.5, 0) -- (1.5,-.25);
\draw[very thick] (-.075,-.25) arc (180:360:.85 and 0.1);
\draw[very thick] (1.6,-.25) arc (0:30:.65 and 0.12);
\draw[very thick] (0.0,-.2) arc (150:180:.55 and 0.12);
\node[draw=none] at (-.75,-.25) {$\frac{1}{n-k}$};
\draw  (1.75,0)--(1.75,-.75);
\node[draw=none] at (2.15,-.125) {$\cdots$};
\draw (2.5,0)  -- (2.5,-.75);
\draw  (2.75,0)--(2.75,-.75);
\draw [dashed] (-.5, -.5) -- (3.25, -.5);
\begin{pgfonlayer}{bg}
\draw[very thick] (1.675,-.75) arc (180:360:.575 and 0.09);
\draw[very thick] (2.825,-.75) arc (0:30:.3 and 0.12);
\draw[very thick] (1.725,-.7) arc (150:180:.3 and 0.12);
\node[draw=none] at (3.25,-.75) {$-\frac{1}{k}$};
\end{pgfonlayer}
\draw (0,-.4) -- (0,-1);
\draw (.25,-.4) -- (.25,-1);
\draw (.5,-.4) -- (.5,-1);
\draw (1.25, -.4) -- (1.25,-1);
\draw (1.5, -.4) -- (1.5,-1);
\draw  (1.75,-.9)--(1.75,-1);
\draw (2.5,-.9)  -- (2.5,-1);
\draw  (2.75,-.9)--(2.75,-1);
\draw plot [smooth, tension=1] coordinates { (0, -1) (.05, -1.75) (.25, -2.3)};
\draw plot [smooth, tension=1] coordinates { (0.25, -1) (.3, -1.75) (.5, -2.2)};
\draw plot [smooth, tension=1] coordinates { (.5, -1) (.55, -1.75) (.75, -2.1)};
\node[draw=none] at (.9,-1.25) {$\cdots$};
\draw plot [smooth, tension=1] coordinates { (1.25, -1) (1.3, -1.65) (1.45, -1.9)};
\draw plot [smooth, tension=1] coordinates { (1.5, -1) (1.51, -1.5) (1.55, -1.7)};
\draw plot [smooth, tension=.4] coordinates { (1.75,-1) (1.5,-2) (.25,-2.5)  (0,-3)(.6, -3.55)};
\node[draw=none] at (.5,-2.75) {$\cdots$};
\draw plot [smooth, tension=.4] coordinates { (2.5,-1) (2.25,-2) (1,-2.5)  (.75,-3) (.95,-3.2)};
\draw plot [smooth, tension=.4] coordinates { (2.75,-1) (2.5,-2) (1.25,-2.5)  (1,-3)(1.05, -3.1)};
\draw plot [smooth, tension=.5] coordinates {(1.15, -2.8) (1.25,-3)(.1,-4.3) (0,-4.5)};
\draw plot [smooth, tension=.5] coordinates {(1.25, -2.55) (1.5,-3)(.35,-4.3)(.25,-4.5)};
\draw plot [smooth, tension=.5] coordinates {(1.5, -2.5) (1.75,-3)(.6, -4.3)(.5,-4.5)};
\node[draw=none] at (2.15,-2.75) {$\cdots$};
\draw plot [smooth, tension=.5] coordinates {(2.25, -2.25)(2.5,-3)(1.35,-4.3)(1.25,-4.5)};
\draw plot [smooth, tension=.5] coordinates {(2.5, -2.1) (2.75,-3)(1.6,-4.3)(1.5, -4.5)};
\draw plot [smooth, tension=.5] coordinates {(1.7, -4.35) (1.8,-4.4) (1.85, -4.5)};
\node[draw=none] at (2.15,-4.25) {$\cdots$};
\draw plot [smooth, tension=1] coordinates {(2.35, -3.75) (2.425,-4) (2.5, -4.5)};
\draw plot [smooth, tension=1] coordinates {(2.5, -3.5) (2.625,-3.9) (2.75, -4.5)};

\node[draw=none] at (1.5,-5) {$\vdots$};

\draw plot [smooth, tension=1] coordinates { (0, -5.5) (.05, -6.25) (.25, -6.8)};
\draw plot [smooth, tension=1] coordinates { (0.25, -5.5) (.3, -6.25) (.5, -6.7)};
\draw plot [smooth, tension=1] coordinates { (.5, -5.5) (.55, -6.25) (.75, -6.6)};
\node[draw=none] at (.9,-5.75) {$\cdots$};
\draw plot [smooth, tension=1] coordinates { (1.25, -5.5) (1.3, -6.15) (1.45, -6.4)};
\draw plot [smooth, tension=1] coordinates { (1.5, -5.5) (1.51, -6) (1.55, -6.2)};
\draw plot [smooth, tension=.4] coordinates { (1.75,-5.5) (1.5,-6.5) (.25,-7)  (0,-7.5)(.6, -8.05)};
\node[draw=none] at (.5,-7.25) {$\cdots$};
\draw plot [smooth, tension=.4] coordinates { (2.5,-5.5) (2.25,-6.5) (1,-7)  (.75,-7.5) (.95,-7.7)};
\draw plot [smooth, tension=.4] coordinates { (2.75,-5.5) (2.5,-6.5) (1.25,-7)  (1,-7.5)(1.05, -7.6)};
\draw plot [smooth, tension=.5] coordinates {(1.15, -7.3) (1.25,-7.5)(.1,-8.8) (0,-9)};
\draw plot [smooth, tension=.5] coordinates {(1.25, -7.05) (1.5,-7.5)(.35,-8.8)(.25,-9)};
\draw plot [smooth, tension=.5] coordinates {(1.5, -7) (1.75,-7.5)(.6, -8.8)(.5,-9)};
\node[draw=none] at (2.15,-7.25) {$\cdots$};
\draw plot [smooth, tension=.5] coordinates {(2.25, -6.75)(2.5,-7.5)(1.35,-8.8)(1.25,-9)};
\draw plot [smooth, tension=.5] coordinates {(2.5, -6.6) (2.75,-7.5)(1.6,-8.8)(1.5, -9)};
\draw plot [smooth, tension=.5] coordinates {(1.7, -8.85) (1.8,-8.9) (1.85, -9)};
\node[draw=none] at (2.15,-8.75) {$\cdots$};
\draw plot [smooth, tension=1] coordinates {(2.35, -8.25) (2.425,-8.5) (2.5, -9)};
\draw plot [smooth, tension=1] coordinates {(2.5, -8) (2.625,-8.4) (2.75, -9)};
\end{tikzpicture}
\caption{$K(p,q,r,-n)$}
\label{figure: surgery pushed up}
\end{center}
\end{figure}

\begin{figure}
\begin{center}
\begin{tikzpicture}
\draw (0,2) -- (0,1);
\draw (.25,2) -- (.25,1);
\draw (.5,2) -- (.5,1);
\draw (1.25, 2) -- (1.25,1);
\draw (1.5, 2) -- (1.5,1);
\draw (1.75, 2) -- (1.75,1);
\node[draw=none] at (.9,1.5) {$\cdots$};
\node[draw=none] at (2.15,1.5) {$\cdots$};
\draw (2.5,2) -- (2.5,1);
\draw (2.75,2) --(2.75,1);
\node[draw=none] at (1.4,.5) {$e-r$};
\draw (-.1, 0) rectangle (2.85, 1); 
\draw plot [smooth, tension=.5] coordinates {(1.5,0) (1.25, -.25) (.1, -.4) (0, -1)};
\draw plot [smooth, tension=.5] coordinates {(1.75,0) (1.55, -.4) (.35, -.6) (0.25, -1)};
\node[draw=none] at (2.15,-.125) {$\cdots$};
\draw plot [smooth, tension=.5] coordinates {(2.5,0) (2.3, -.5) (1.6, -.7) (1.5, -1)};
\draw plot [smooth, tension=.5] coordinates {(2.75,0) (2.6, -.5) (1.85, -.8) (1.75, -1)};
\draw (0,0) -- (.1,-.3); \draw (.15, -.45) -- (.2, -.6);
\node[draw=none] at (.7,-.125) {$\cdots$};
\draw (1.25, 0) -- (1.3,-.15); \draw (1.35, -.25)--(1.383,-.35);
\draw plot [smooth, tension = 1] coordinates{(1.93,-.8)(2,-1)(2,-1.4)};
\node[draw=none] at (2.385,-.85) {$\cdots$};
\draw plot [smooth, tension=1] coordinates {(2.6,-.55) (2.75, -1) (2.75,-1.4)}; 
\draw[very thick] (-.125,-1) arc (180:360:.99 and 0.1);
\draw[very thick] (1.85,-1) arc (0:30:.45 and 0.12);
\draw[very thick] (-.05,-.95) arc (150:180:.55 and 0.12);
\draw (0,-1.15)--(0,-1.4);
\draw (0.25,-1.15)--(0.25,-1.4);
\node[draw=none] at (.875,-1.25) {$\cdots$};
\draw (1.5,-1.15)--(1.5,-1.4);
\draw (1.75,-1.15)--(1.75,-1.4);
\node[draw=none] at (-1.25,-1) {$\frac{1}{n-k-1}$};
\node[draw=none] at (.875,-1.6) {$\underbrace{\ \ \ \ \ \ \ \ \ \ \ \ \ \ \ \ \ }$};
\node[draw=none] at (.875,-1.85) {$r$ strands};
\end{tikzpicture}
\caption{Top of $K(p,q,r,-n)$}
\label{figure: top of braid}
\end{center}
\end{figure}
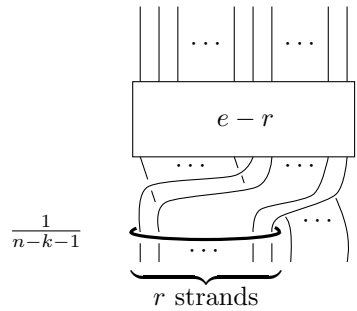

\section{Inequivalent positions of knots}
\label{section:InequivalentPositionsOfKnots}

In this section, we will discuss knots positioned on a genus two Heegaard surface, and relationships between such a knot and the handlebodies on either side. We will also introduce the \emph{extended Goeritz group} and discuss its induced action on the homology of a surface.

\subsection{Primitive and Seifert positions}
\label{subsection:PrimitiveSeifertPositions}

We begin by introducing the notion of Dehn surgery. Let $K$ be a knot in $S^3$, and let $N(K)$ denote a closed tubular neighborhood of $K$. A \emph{slope} on $K$ is an isotopy class of curves in $\bd N(K)$.

Let $\alpha$ be a slope on $K$. Let $M_K$ be the manifold obtained by removing the interior of $N(K)$ from $S^3$, so that $M_K$ is a three-manifold with boundary, and that boundary is a torus, $\bd N(K)$. Let $M_K(\alpha)$ be the quotient three-manifold obtained by attaching a solid torus, $D^2 \times S^1$, to $M_K$ so that the slope $\alpha$ is identified with a non-trivial isotopy class of curves in $\bd D^2 \times S^1$ that bound a disk in $D^2 \times S^1$. This process is called \emph{Dehn surgery} on $K$ along $\alpha$.

Dehn surgery is an important and long-studied area of low-dimensional topology, motivated initially by the incredible fact that every possible closed, compact, connected, orientable three-manifold can be obtained by some finite collection of simultaneous Dehn surgeries on the components of some link in $S^3$ (see \cite{WalMCM}). 

\begin{definition} 
Let $K$ be a simple closed curve embedded in a closed surface $F$ in $S^3$. Let $N(K)$ be a closed tubular neighborhood of $K$ in $S^3$. The \emph{surface slope} of $K$ induced by $F$ is the isotopy class in $\bd N(K)$ of a component of $F \cap \bd N(K)$.
\end{definition}

When the surface $F$ is understood, the reference to $F$ is often omitted. For instance, the twisted torus knots we have discussed lie on a Heegaard surface of genus two, and therefore have a naturally associated slope induced by this Heegaard surface.

John Berge recognized that if a knot sits on a Heegaard surface in such a way that it has a nice position in relationship to both of the handlebodies on either side of the surface, then Dehn surgery on the knot along the surface slope would yield a predictable manifold. This precipitated a great deal of research into which positions were interesting, and what the resulting manifolds would be. 

Let $H$ be a three-manifold with boundary, and let $K$ be a simple closed curve embedded in $\bd H$. A \emph{2-handle attachment} to $H$ along $K$ is the quotient three-manifold obtained by attaching a 2-handle, $D^2 \times I$, to $H$ so that the annular region, $\bd D^2 \times I$, is identified with an annular neighborhood of $K$ in $\bd H$. The resulting manifold is denoted $H[K]$.

If $H$ is a genus two handlebody, and $K$ is a simple closed curve embedded in $\bd H$, we say that $K$ is \emph{primitive} with respect to $H$ if $H[K]$ is a solid torus, or we say that $K$ is \emph{Seifert} with respect to $H$ if $H[K]$ is a manifold in a special class, called a Seifert fibered space.

\begin{definition} Let $K$ be a knot sitting on a genus two Heegaard surface bounding two handlebodies, $H$ and $H'$. Then $K$ is called \emph{primitive/primitive} (or just \emph{p/p}) if $K$ is simultaneously primitive with respect to $H$ and $H'$, or $K$ is called \emph{primitive/Seifert} (or just \emph{p/S}) if $K$ is simultaneously primitive with respect to $H$ and Seifert with respect to $H'$ (or vice versa).
\end{definition}

Among others, Berge, Dean \cite{DeaSSFDSHK}, and Eudave-Mu\~{n}oz \cite{EudHKSFDS} have studied p/p, and p/S knots.

We will make use of some important results of Dean. When Dean introduced the twisted torus knots, he included an additional (unnecessary) parameter, and deduced a method of determining whether a twisted torus knot is primitive or Seifert with respect to either handlebody by calculating an algebraic term $w_{p, q, r, m, n}$ in terms of these parameters. We will adapt this term for use with our notation.

\begin{definition}
If the twisted torus knot $K = K(p, q, r, n)$ sits on a genus two Heegaard surface bounding handlebodies $H$ and $H'$, then the conjugacy class of $K$ in the fundamental group of one of the handlebodies is denoted $w_{p, q, r, n, 1}$, and the in the other as $w'_{q, p, r, 1, n}$. 
\end{definition}

Without delving too deeply into these terms, we will use the following important facts.

\begin{lemma}[Lemma 3.3 of \cite{DeaSSFDSHK}]
\label{lemma:DeanLemma}
The term $w_{p, q, r, m, n}$ satisfies:
\begin{enumerate}
\item[(i)] $w_{p, q, r, m, n}$ is equivalent to $w_{p, q', r, m, n}$ if $q \equiv \pm q'$ mod $p$.
\item[(ii)] $w_{p, q, r, m, n}$ is equivalent to $w_{p, q, r', m, n}$ if $r \equiv \pm r'$ mod $p$.
\end{enumerate}
\end{lemma}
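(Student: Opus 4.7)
The plan is to realize each parameter change as a self-homeomorphism of the genus two handlebody whose fundamental group carries the word $w_{p,q,r,m,n}$, and observe that any such homeomorphism preserves the conjugacy class of an embedded knot in $\pi_1$. Recall that the ambient handlebody $H$ can be regarded as built from a solid torus $V$ (on whose boundary the torus knot portion $T(p,q)$ lives) together with a second solid torus $V'$ attached at the region where $r$ strands are twisted by $n$, with generators of $\pi_1(H) \cong F_2$ corresponding to the cores of $V$ and $V'$.

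For part (i), the shift $q \mapsto q+p$ is realized by a Dehn twist of $\partial V$ along its meridian: since the meridian bounds a disk in $V$, this Dehn twist is isotopic to the identity in $V$ and hence extends to a self-isotopy of $H$. Under this isotopy, $T(p,q)$ is sent to $T(p,q+p)$ while the localized twisting region for the $r$ strands is untouched, so $K(p,q,r,n)$ is isotopic to $K(p,q+p,r,n)$ inside $H$. Iterating yields the conclusion for all $q' \equiv q \pmod{p}$. For the sign change $q'=-q$, I would use the involution $(x,\theta)\mapsto(x,-\theta)$ of $V \cong D^2 \times S^1$, which is a homeomorphism of $H$ sending $T(p,q)$ to $T(p,-q)$ and inducing inversion on the corresponding generator of $\pi_1(H)$. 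Since $w$ encodes an unoriented knot, the inverse word represents the same conjugacy class.

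Part (ii) follows by symmetric reasoning applied to $V'$: shifting $r$ by $p$ corresponds to a Dehn twist along a meridian of $V'$, which bounds a disk and thus extends to an isotopy of $H$, while $r'=-r$ arises from the analogous orientation-reversing involution of $V'$. In both parts, the analogous statement for $w'_{q,p,r,1,n}$ follows from the symmetry built into Dean's construction, where the roles of the two handlebodies are interchanged.

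The principal obstacle is verifying that each prescribed homeomorphism is genuinely supported near a single piece of the handlebody decomposition and does not disturb the reading of $w$ elsewhere: one must ensure the twisting curve bounds a disk on the correct side, so that the Dehn twist truly extends to an ambient isotopy rather than merely a boundary homeomorphism. This reduces to a check that the meridian curves in question are primitive in $H$ (respectively $H'$), which follows from the explicit structure of the genus two Heegaard splitting of $S^3$ used to describe the canonical embedding of $K(p,q,r,n)$.
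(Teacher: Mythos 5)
The paper does not prove this statement at all: it is imported verbatim as Lemma 3.3 of Dean's work \cite{DeaSSFDSHK}, where it is established from the explicit form of the word $w_{p,q,r,m,n}$ read off from the meridian disks. So your proposal has to stand on its own, and as written it has several genuine gaps. First, with the conventions in play ($p$ counts wraps around the meridian of $T$, $q$ around the longitude), a Dehn twist along the meridian of $V$ sends the class $p\mu+q\lambda$ to $(p\pm q)\mu+q\lambda$: it changes $p$ modulo $q$, not $q$ modulo $p$. The curve whose twist realizes $q\mapsto q\pm p$ is the longitude, and its compressing disk lies in the \emph{complementary} solid torus, i.e.\ on the other side of the splitting; so the issue you defer to your final paragraph is exactly where the content lies, and your proposed criterion there is the wrong one. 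For a boundary Dehn twist to extend as a disk twist of a handlebody, the twisting curve must bound a disk in (be null-homotopic in) that handlebody; primitivity is irrelevant. Moreover, such a twist is \emph{not} ``isotopic to the identity in $V$'' (it acts nontrivially on $H_1(\partial V)$); the fact you actually need is that a twist along a properly embedded disk is homotopic to the identity of the handlebody containing the disk, hence preserves conjugacy classes in its fundamental group. Without pinning down which handlebody carries $w_{p,q,r,m,n}$ and checking the twisting curve compresses on that side, you at best obtain equivalence up to a non-inner automorphism of the free group, which must still be reconciled with Dean's notion of equivalence.

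Second, part (ii) is not established by your argument even in outline. A meridional twist of $V'$ (the small torus supporting the twist region) changes the twisting parameters ($n$, or $m/n$ in Dean's five-parameter notation), not the number $r$ of strands routed through that region; no homeomorphism you describe carries $K(p,q,r,n)$ to $K(p,q,r\pm p,n)$, and realizing $r\mapsto r\pm p$ geometrically is the substantive point of (ii). Similarly, the orientation-reversing involutions you invoke for the sign cases mirror the entire local picture, so they negate the twisting parameters as well, relating $w_{p,q,r,m,n}$ to a word of the form $w_{p,-q,r,m,-n}$ rather than to $w_{p,-q,r,m,n}$ as claimed. A corrected geometric argument would need to identify the correct handlebody and compressing curves and to handle (ii) and the sign changes by different means; alternatively, the statement follows, as in Dean, by direct inspection of the explicit word, whose letters depend only on the residues of $q$ and $r$ modulo $p$ up to inversion and reversal.
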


\begin{theorem}[Theorem 3.4 of \cite{DeaSSFDSHK}]
\label{theorem:DeanTheorem}
The torus knot $K(p, q, r, n)$ is primitive with respect to one of the handlebodies if and only if
\begin{enumerate}
\item[(i)] $p=1$; or
\item[(ii)] $n=\pm1$ and $r \equiv \pm 1$ or $\pm q$ mod $p$.
\end{enumerate}
\end{theorem}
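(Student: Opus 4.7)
The plan is to analyze primitivity via the fundamental group of the handlebody. Let $H$ and $H'$ be the two handlebodies bounded by the Heegaard surface $F$, each with $\pi_1 \cong F_2$, and fix generators $a,b$ for $\pi_1(H)$ corresponding to cores of the two handles of $H$. A simple closed curve $K \subset F$ is primitive with respect to $H$ precisely when its conjugacy class in $\pi_1(H)$ is represented by a \emph{primitive element} (an element of some free basis of $F_2$), which is equivalent to $H$ admitting a properly embedded disk meeting $K$ transversely in a single point.

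The first step is to write down explicitly the word $w_{p,q,r,n,1} \in F_2$ representing the conjugacy class of $K(p,q,r,n)$ in $\pi_1(H)$ (and similarly $w'_{q,p,r,1,n}$ for $\pi_1(H')$). Using the construction of $F$ as the union of a torus $T$ containing $T(p,q)$ and a torus $T'$ containing $r$ parallel copies of $T(1,n)$, glued along a disk, the knot crosses the meridian disks of $H$ in a pattern that can be read off from its braided position, giving each factor of $a$ or $b$ a sign determined by direction. For the sufficiency of (i) and (ii) I would dispatch the easy cases directly. When $p=1$, the underlying torus knot $T(1,q)$ is an unknot, and a meridian disk of one handle meets $K$ exactly once, giving primitivity. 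For $n=\pm 1$ and $r \equiv \pm 1 \pmod p$, Lemma \ref{lemma:DeanLemma}(ii) reduces to $r=1$, and a $\pm 1$ twist on a single strand is trivial, so $K(p,q,1,\pm 1) = T(p,q)$ is primitive in the standard way. For $n=\pm 1$ and $r \equiv \pm q \pmod p$, the same lemma reduces to $r=q$, and a $\pm 1$ full twist on all $q$ strands of $T(p,q)$ produces the torus knot $T(p, q \pm p)$, again primitive.

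The necessity direction is the main obstacle. To prove that no other parameter choices yield a primitive curve with respect to either handlebody, I would combine two tools. First, the abelianization $\pi_1(H) \to \mathbb{Z}^2$ sends the conjugacy class of $K$ to a vector whose entries can be computed in terms of $p,q,r,n$; primitivity forces these entries to be coprime, which disposes of many parameter combinations immediately. Second, for the remaining cases one must rule out primitivity at the non-abelian level. Here I would appeal to the classical description of primitive conjugacy classes in $F_2$ (due to Nielsen, refined by Osborne--Zieschang), which are in bijection with $\mathbb{Q} \cup \{\infty\}$ via continued fractions, together with Whitehead's peak-reduction algorithm, which detects primitivity of any given word in the free group. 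Running $w_{p,q,r,n,1}$ and $w'_{q,p,r,1,n}$ through this analysis should show that outside the listed parameter cases no Whitehead reduction to a length-one word is possible. The chief difficulty lies in managing the combinatorics of these words as $r$, $n$, and the residues of $q$ and $r$ modulo $p$ vary; careful bookkeeping of cyclic permutations and Nielsen transformations is what makes the argument nontrivial, and is also what makes the modular-reduction statements of Lemma \ref{lemma:DeanLemma} essential in collapsing the infinitely many parameter choices to the finitely many congruence classes that appear in the theorem.
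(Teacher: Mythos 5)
A point of context first: the paper does not prove this statement at all --- it is quoted verbatim (up to allowing $n=-1$) as Theorem 3.4 of Dean \cite{DeaSSFDSHK}, and the authors' only contribution is the remark that Dean's simplifying hypothesis $n=1$ extends to $n=-1$. So there is no in-paper proof to match; the relevant comparison is with Dean's own argument, which works directly with the explicit free-group words $w_{p,q,r,m,n}$ in $\pi_1(H)$ and $\pi_1(H')$.

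Judged as a proof, your proposal has a genuine gap: the necessity direction, which is the entire content of the theorem, is only a plan. You never actually write down $w_{p,q,r,n,1}$ or $w'_{q,p,r,1,n}$, and saying that peak reduction and the Osborne--Zieschang classification ``should show'' no other parameters give a primitive word defers exactly the combinatorial work that constitutes the proof. The sufficiency cases as written also contain errors of substance. Primitivity is a property of the curve's position on $F$ (whether $H[K]$ is a solid torus), not of its knot type in $S^3$: reducing the case $r\equiv\pm1$ to ``$K(p,q,1,\pm1)=T(p,q)$, primitive in the standard way'' conflates isotopy in $S^3$ with isotopy in $F$. Indeed, by Lemma \ref{lemma:TwistedTorusKnotInH1} the curve $K(p,q,1,\pm1)$ represents $(q,\pm1,-p,-1)$ in $H_1(F)$, so it is not isotopic in $F$ to the standard torus-knot position $(q,0,-p,0)$; and a $(p,q)$ curve lying on the punctured-torus piece of $F$ represents a proper power ($a^{q}$ or $b^{p}$) in the corresponding handlebody, hence is generally \emph{not} primitive there. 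The correct short argument for $r=1$, $n=\pm1$ is that the word in the appropriate handlebody contains one generator exactly once, hence is primitive. Similarly, in the case $r\equiv\pm q$ your identification of the result as $T(p,q\pm p)$ is incorrect (a full twist on all $q$ strands of $T(p,q)$ yields $T(p\pm q,q)$), and in any case identifying the knot type does not by itself establish primitivity of the position on $F$. Finally, invoking Lemma \ref{lemma:DeanLemma} imports another piece of Dean's machinery whose notion of ``equivalence'' of words you would also have to justify if you were genuinely reproving the theorem from scratch.
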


We remark that the original statement of Theorem 3.4 in \cite{DeaSSFDSHK} requires that $n = 1$, but this is the consequence of an assumption to simplify the exposition, and it is clear from the proof that it applies just as well when $n = -1$.

Dean also provides criteria for a twisted torus knot to be Seifert with respect to a particular handlebody. He refers to three cases as \emph{hyper}-, \emph{end}-, or \emph{middle}-Seifert-fibered, depending on the values of the parameters $n$ and $r$. He conjectures that these describe all cases of twisted torus knots that are Seifert. We will say that a knot is \emph{HEM}-Seifert if it is one of these three cases.

\subsection{Extended Goeritz group}
\label{subsection:ExtendedGoeritzGroup}

In \cite{SchaA3SPG2HS}, Scharlemann proves that the group of isotopy classes of
orientation-preserving homeomorphisms of $S^3$ that preserve a genus 2 Heegaard
splitting (preserving each handlebody set-wise) is generated by four elements: $\alpha$, $\beta$, $\gamma$ and
$\delta$. The automorphisms $\alpha$, $\beta$ and $\gamma$ are as shown in Figs. \ref{fig:alpgam}
and \ref{fig:beta}, where each rotation is by $\pi$. 

\begin{figure}[h]
\begin{center}
\includegraphics[scale=.5]{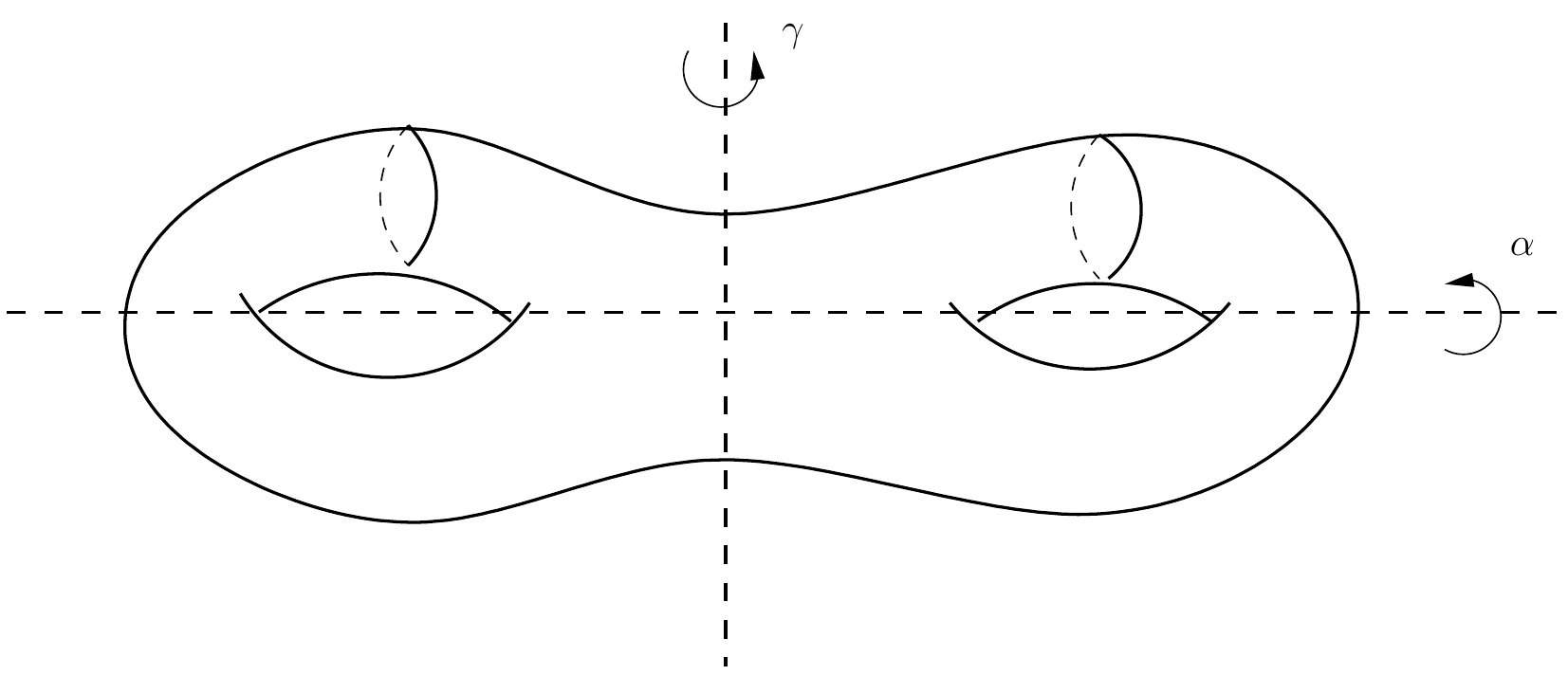} 
\caption{The automorphisms $\alpha$ and $\gamma$ both induce rotations of each entire handlebody by $\pi$.}\label{fig:alpgam}
\end{center}
\end{figure}

\begin{figure}[h!]
 \begin{center}
\includegraphics[trim={2in 6.5in 1in 2in}, scale=.5]{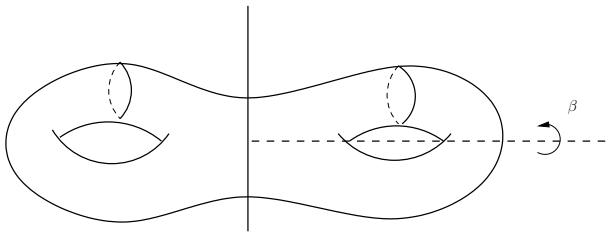} 
\caption{The automorphism $\beta$ acts on one of the handlebodies by a rotation of half of the handlebody by $\pi$.}\label{fig:beta}
\end{center}
\end{figure}

As described in \cite{SchaA3SPG2HS}, the element $\delta$ can be chosen from many
possible operations. Here we think of the genus two handlebody as a 2-sphere with
two 1-handles attached to it. The homeomorphism $\delta$ is the one that slides
the foot of one of the 1-handles over a longitudinal curve of the other and back
to its original position. 

These automorphisms preserve $H$ and $H'$, individually. To fully describe orientation-preserving automorphisms of $S^3$ that preserve the Heegaard surface, we must also allow for automorphisms that exchange $H$ and $H'$. Let $\varepsilon$ be obtained from rotation by $\pi$ about the curve $C$, as
shown in Fig. \ref{fig:eps}. 

\begin{figure}[h]
 \begin{center}
\includegraphics[scale=.5]{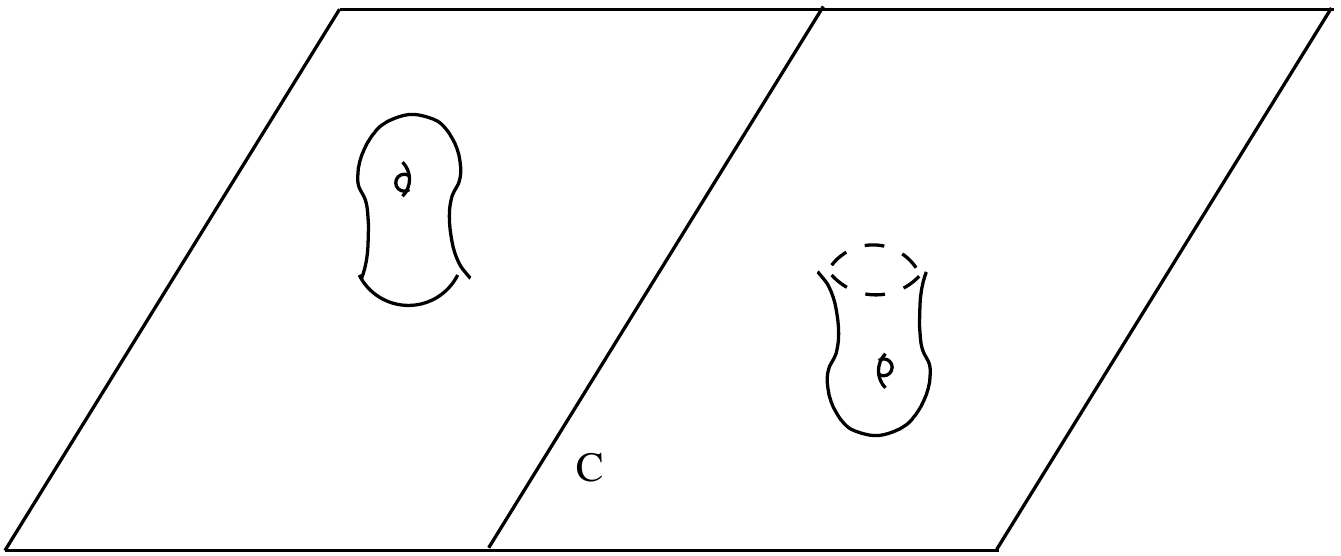} 
\caption{The automorphism $\varepsilon$ is orientation-preserving, and exchanges the two handlebodies $H$ and $H'$.}
\label{fig:eps}
\end{center}
\end{figure}

We will call the group generated by $\alpha$, $\beta$, $\gamma$, $\delta$ and
$\varepsilon$ the \textit{extended Goeritz group}. Observe that this includes every isotopy class of orientation-preserving homeomorphism of $S^3$ that preserves a genus two Heegaard \emph{surface}.

We will be considering the induced action of the extended Goeritz group on the homology of the Heegaard surface. First, label the
generators of $H_1( F)$ as $a$, $x$, $b$, and $y$, as shown in Fig. \ref{fig:coords}. We will adopt the convention of representing elements of $H_1(F)$ in the form $(a, x, b, y)$, listing both meridians before their duals.

\begin{figure}[h]
\begin{center}
\includegraphics[scale=.6]{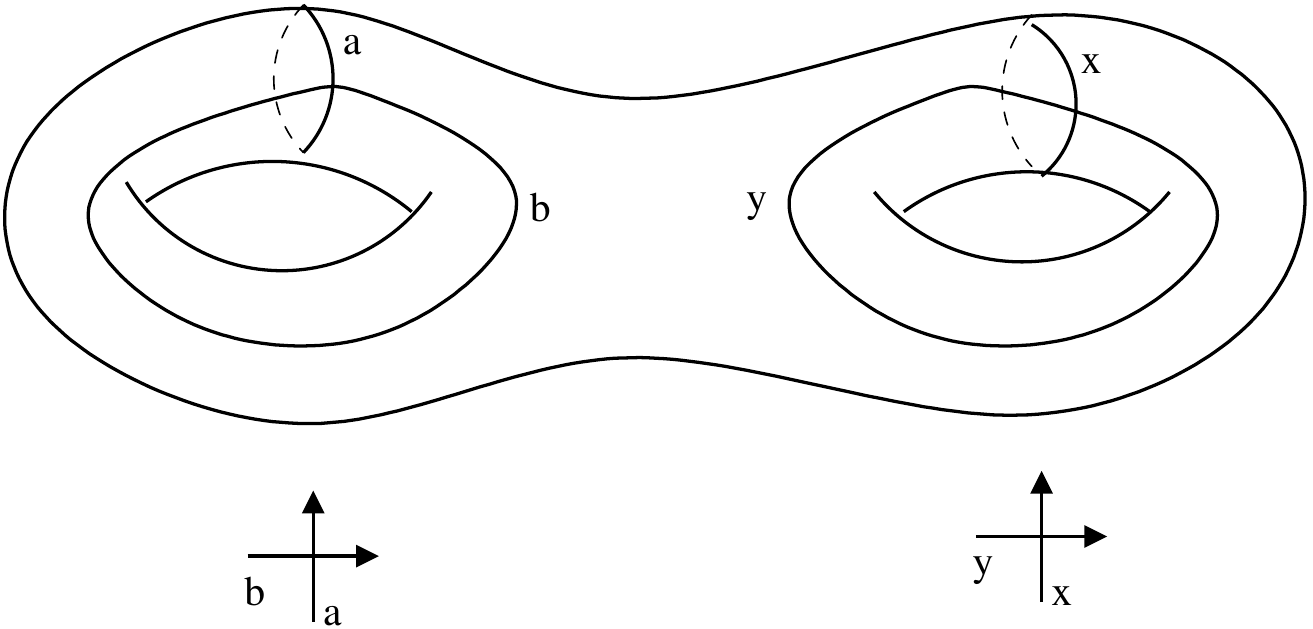} 
\caption{Generators of $H_1( F) = \langle a \rangle \oplus \langle x \rangle \oplus \langle b \rangle \oplus \langle y \rangle$.}\label{fig:coords}
\end{center}
\end{figure}

\begin{lemma}
\label{lemma:TwistedTorusKnotInH1}
In $H_1(F)$, the twisted torus knot $K(p, q, r, n)$ is represented by the class $[K(p, q, r, n)] = (q, nr, -p, -r)$.
\end{lemma}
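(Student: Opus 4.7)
The plan is to compute the homology class of $K(p,q,r,n)$ by decomposing the knot into its two pieces coming from the two tori $T$ and $T'$ used in the construction of the canonical embedding, and then summing the contributions in $H_1(F)$ after identifying the generators $a, x, b, y$ with standard basis elements on each torus.

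First I would set up the ambient splitting. The surface $F$ is built from the disjoint tori $T$ and $T'$ by removing open disks $D \subset T$ and $D' \subset T'$ and gluing their boundary circles. Since each boundary circle bounds a disk (namely $D$ or $D'$) in the respective side, it is nullhomologous in $F$, so Mayer--Vietoris gives a splitting $H_1(F) \cong H_1(T) \oplus H_1(T')$. Choose standard bases $(m_T, l_T)$ for $H_1(T)$ and $(m_{T'}, l_{T'})$ for $H_1(T')$, where $m$ denotes the meridian of the unknotted torus and $l$ denotes the longitude. Reading the labels off of the figure defining $a, x, b, y$, I would identify $a = l_T$, $b = -m_T$, $x = l_{T'}$, $y = -m_{T'}$; the choice of signs is simply the convention being used to orient the generators in that picture.

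Next I would compute the two contributions. The arcs of $K(p,q,r,n)$ lying in $T \setminus D$ form a properly embedded $1$-manifold whose homology class in $H_1(T \setminus D) \hookrightarrow H_1(F)$ agrees (after filling in $D$ by the parallel arcs in the collar of the gluing circle) with the homology class of the uncut torus knot $T(p,q)$ on $T$, namely $p\,m_T + q\,l_T$. By the identification above this equals $q \cdot a - p \cdot b$. Similarly the arcs in $T' \setminus D'$ fill in to $r$ parallel copies of $T(1,n)$ on $T'$, with total homology class $r\,m_{T'} + nr\,l_{T'} = nr \cdot x - r \cdot y$. Adding, $[K(p,q,r,n)] = q\cdot a + nr\cdot x - p\cdot b - r \cdot y$, which is $(q, nr, -p, -r)$ in the stated ordering.

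The main technical point, and the only place where real care is needed, is the sign bookkeeping: I must verify that the two pieces glue up with compatible orientations, so that the contributions from $T$ and from $T'$ add rather than cancel, and that the signs attached to $b$ and $y$ really do come from the figure's chosen orientations of the generators. This reduces to a careful reading of the pictures defining the canonical embedding and the generators of $H_1(F)$, rather than any essential topological difficulty; once the orientation conventions are pinned down, the rest of the argument is an immediate homological count.
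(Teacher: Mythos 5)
Your proposal is correct and follows essentially the same route as the paper: the paper's proof also decomposes $K(p,q,r,n)$ into the $(p,q)$-torus knot piece, representing $(q,0,-p,0)$, and $r$ parallel copies of the $(1,n)$-torus knot, representing $r\cdot(0,n,0,-1)$, and adds the classes. Your Mayer--Vietoris splitting and explicit identification of $a,x,b,y$ with meridians and longitudes just makes explicit the orientation bookkeeping that the paper handles by inspection of the figure.
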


\begin{proof}
In $H_1(F)$, the $(p, q)$-torus knot lying on the side of $F$ with generators $a$ and $b$ represents $(q, 0, -p, 0)$, and the $(1, n)$-torus knot represents $(0, n, 0, -1)$. Observe that the twisted torus knot can be defined correctly from these two torus knots with these orientations, and that the $(p,q)$-torus knot has positive crossings, while the $(1, n)$-torus knot will have crossings that agree with the sign of $n$. Then, as the twisted torus knot $K(p, q, r, n)$ has $r$ strands twisting $n$ times forming a $r$ parallel $(1, n)$-torus knot, the result follows.
\end{proof}

Denote by $\as$, $\bs$, $\gs$, $\ds$, and $\es$ the maps induced on $H_1(
F)$ by $\alpha$, $\beta$, $\gamma$, $\delta$, and $\varepsilon$, respectively.
Considering elements of $H_1( F)$ in the form $( a, x, b, y)$, the induced maps act in the following way:

\[
\begin{array}{lllll}
\as : & a \mapsto -a,  & x \mapsto -x, & b \mapsto -b, & y \mapsto -y \\
\bs: & a \mapsto a, & x \mapsto -x, & b \mapsto b, & y \mapsto -y \\
\gs: &  a \mapsto -x, & x \mapsto -a, & b \mapsto -y, & y \mapsto -b \\
\ds: & a \mapsto a+x, & x \mapsto x, & b \mapsto b, & y \mapsto y-b \\
\es: &  a \mapsto y, & x \mapsto b, & b \mapsto x, & y \mapsto a.
\end{array}
\]

Using these
definitions, we can define relations on the induced maps. Because $\as$ scales an element of $H_1(F)$ by $-1$, 
 the map $\as$ commutes with all the other induced maps. The other relations are defined as
follows:

\begin{eqnarray*}
\gs \bs & = & \as \bs \gs \\
\es \bs & = & \as \bs \es \\
\ds \bs & = & \bs \ds^{-1} \\
\gs \es & = & \es \gs \\
 \ds \es & = & \es \ds^{-1} .
\end{eqnarray*}

Since $\ds$ and $\gs$ almost commute with $\bs$ and $\es$, and because $\as$, $\bs$, $\gs$, and $\es$ all have order two, we can write the
general form of a composition of these maps as \[ \as^h \bs^j \es^k \gs^l
\ds^{m_1} \gs \ds^{m_2} \gs \cdots \gs \ds^{m_n},\] where $h, j, k, l \in \{ 0, 1
\}$, $m_i \in \mathbb{Z} - \set{0}$ for $i = 1, \ldots, n-1$ and $m_n \in \mathbb{Z}$.

If we consider the $a$, $x$, $b$ and $y$ to be the elementary vectors in $\mathbb R^4$, the maps $\as$, $\bs$, $\gs$ and $\ds$ can be represented as $4\times4$ matrices
over $\mathbb{Z}$, all of the form 
$\begin{pmatrix} 
  A & 0\\ 
  0 & B 
\end{pmatrix}$.
For $\gs$, $A=B= \begin{pmatrix} 
  0 & -1\\ 
  -1 & 0 
\end{pmatrix}$ and for $\ds$, $A = \begin{pmatrix} 
  1 & 0\\ 
  1 & 1 
\end{pmatrix}$ and $B= \begin{pmatrix} 
  1 & -1\\ 
  0 & 1 
\end{pmatrix}$. 

\begin{lemma}\label{lemma:FormOfMap}
The map  $\gs^l \ds^{m_1} \gs \ds^{m_2} \gs \cdots \gs \ds^{m_n}$ where $l \in
\{0,1\}$, $m_i \in \mathbb Z -\{0\}$ for $i = 1, \ldots, n-1$ and $m_n \in \mathbb{Z}$ is
represented by a $4\times4$ matrix over $\mathbb{Z}$ of the form
$\begin{pmatrix} 
  C & 0 \\ 
  0 & ( C^T ) ^{-1} 
\end{pmatrix},$ where $C$ is an invertible $2 \times 2$ matrix with determinant $\pm 1$.
\end{lemma}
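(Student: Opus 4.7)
The proof plan is essentially a direct computation combined with a structural induction on the length of the word. First I would observe that both $\gs$ and $\ds$ are block-diagonal with $2\times 2$ integer blocks, and the claim is that each of them already has the stated form. For $\gs$, the top block $A = \begin{pmatrix} 0 & -1 \\ -1 & 0 \end{pmatrix}$ is symmetric and an involution, so $(A^T)^{-1} = A = B$. For $\ds$, the top block $A = \begin{pmatrix} 1 & 0 \\ 1 & 1 \end{pmatrix}$ gives $(A^T)^{-1} = \begin{pmatrix} 1 & -1 \\ 0 & 1 \end{pmatrix} = B$. In both cases $\det A = \pm 1$, and the matrices are integer, so $\ds^{-1}$ (and hence any integer power $\ds^{m}$) is also integer with the same block form, because $(A^{-1})^T)^{-1} = A$ and the block form is preserved under inversion.

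The key algebraic fact is that the set
\[
\mathcal{S} = \left\{ \begin{pmatrix} C & 0 \\ 0 & (C^T)^{-1} \end{pmatrix} : C \in GL_2(\Z),\ \det C = \pm 1 \right\}
\]
is closed under multiplication. This is immediate: if $M_i = \begin{pmatrix} C_i & 0 \\ 0 & (C_i^T)^{-1} \end{pmatrix}$ for $i=1,2$, then
\[
M_1 M_2 = \begin{pmatrix} C_1 C_2 & 0 \\ 0 & (C_1^T)^{-1}(C_2^T)^{-1} \end{pmatrix} = \begin{pmatrix} C_1 C_2 & 0 \\ 0 & ((C_1 C_2)^T)^{-1} \end{pmatrix},
\]
using the identity $(C_1 C_2)^T = C_2^T C_1^T$. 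Since $\det(C_1 C_2) = \det C_1 \det C_2 \in \{\pm 1\}$, the product lies in $\mathcal{S}$.

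Given these two observations, I would finish by induction on the number $n$ of $\ds^{m_i}$ factors in the expression $\gs^l \ds^{m_1} \gs \ds^{m_2} \gs \cdots \gs \ds^{m_n}$. The base case $n=0$ reduces to $\gs^l$, which is either $I$ or $\gs$, both in $\mathcal{S}$. For the inductive step, write the word as $(\gs^l \ds^{m_1} \gs \cdots \gs \ds^{m_{n-1}} \gs) \cdot \ds^{m_n}$; by the induction hypothesis the first factor is in $\mathcal{S}$, $\ds^{m_n}$ is in $\mathcal{S}$, and so is their product by the closure property.

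There is no real obstacle here; the content is just recognizing that $\mathcal{S}$ forms a subgroup of $GL_4(\Z)$ isomorphic to $GL_2(\Z)$ under $C \mapsto \operatorname{diag}(C, (C^T)^{-1})$, and that the given generators $\gs$ and $\ds$ both sit inside this subgroup. The only mild care needed is to treat $\ds^{m_n}$ with $m_n$ possibly zero or negative, which is handled by the observation that $\mathcal{S}$ is closed under inverses since $((C^T)^{-1})^T)^{-1} = C$.
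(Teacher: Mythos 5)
Your proof is correct and takes essentially the same approach as the paper: check that the generators $\gs$ and $\ds^{\pm 1}$ have the stated block form and then induct on the length of the word, with the only difference being that you package the inductive step as the general identity $((C_1C_2)^T)^{-1} = (C_1^T)^{-1}(C_2^T)^{-1}$ (closure of the subgroup) instead of the paper's explicit block multiplications by $\gs$, $\ds$, and $\ds^{-1}$. Nothing further is needed.
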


\begin{proof}
We use induction on the length of the word $\gs^l \ds^{m_1} \gs \ds^{m_2} \gs
\cdots \gs \ds^{m_n}$. Both $\gs$ and $\ds$ are of the correct form. Suppose the
lemma is true when the word has length $k$. Then we need only check that 
post-composition with $\gs$, $\ds$ and $\ds^{-1}$ preserves this form. Suppose that $C = \begin{pmatrix} 
  s & t\\ 
  u & v 
\end{pmatrix}$, where $sv - tu = \pm 1$.

Post-composition with $\gs$ consists of multiplication of the following blocks:  
\[ \begin{pmatrix} 
  0 & -1\\ 
  -1 & 0 
\end{pmatrix} \begin{pmatrix} 
  s & t\\ 
  u & v 
\end{pmatrix} = \begin{pmatrix} 
  -u & -v\\ 
  -s & -t 
\end{pmatrix} \mbox{ and } \begin{pmatrix} 
  0 & -1\\ 
  -1 & 0 
\end{pmatrix} \left( \frac{1}{sv - tu} \begin{pmatrix} 
  v & -u\\ 
  -t & s 
\end{pmatrix} \right) = \frac{1}{sv - tu}\begin{pmatrix} 
  t & -s\\ 
  -v  & u 
\end{pmatrix}. \]

Post-composition with $\ds$ consists of multiplication of the following blocks: 
\[\begin{pmatrix} 
  1 & 0\\ 
  1 & 1 
\end{pmatrix} \begin{pmatrix} 
  s & t\\ 
  u & v 
\end{pmatrix} = \begin{pmatrix} 
  s & t\\ 
  s+u & t + v 
\end{pmatrix} \mbox{ and } \begin{pmatrix} 
  1 & -1\\ 
  0 & 1 
\end{pmatrix}\left( \frac{1}{sv -tu} \begin{pmatrix} 
  v & -u\\ 
  -t & s 
\end{pmatrix} \right) = \frac{1}{sv - tu} \begin{pmatrix} 
  t+v & -u-s\\ 
  -t  & s 
\end{pmatrix}.\] 

And finally, post-composition with  $\ds^{-1}$ consists of: 
\[\begin{pmatrix} 
  1 & 0\\ 
  -1 & 1 
\end{pmatrix} \begin{pmatrix} 
  s & t\\ 
  u & v 
\end{pmatrix} = \begin{pmatrix} 
  s  & t \\ 
  u-s  & v-t
\end{pmatrix} \mbox{ and } \begin{pmatrix} 
  1 & 1\\ 
  0 & 1 
\end{pmatrix}\left( \frac{1}{sv - tu} \begin{pmatrix} 
  v & -u\\ 
  -t & s 
\end{pmatrix} \right) = \frac{1}{sv - tu}\begin{pmatrix} 
  v-t & s-u \\ 
  -t &  s
\end{pmatrix}.\]

In all cases, the second matrix is the inverse of the transpose of the first matrix.
\end{proof}

\subsection{Inequivalent positions of knots}
\label{subsection:InequivalentPositionsKnots}

Doleshal proved the following.

\begin{theorem}[Theorem 4.1 of \cite{DolFPSTTK}]
For integers $k$, $q$, and $m$ with $k \geq 2$, $q > 2$, $1 \leq m < q$, and $(q, m) = 1$, let $K_1$ and $K_2$ be the twisted torus knots $K_1 = K(kq + m, q, m, -1)$ and $K_2 = K(kq + q - m, q, q - m, -1)$, with their canonical embeddings on the genus two Heeagaard surface, $F$, for $S^3$. Then, $K_1$ and $K_2$ are isotopic as knots in $S^3$ and have the same surface slope with respect to $F$, but there is no homeomorphism of $S^3$ sending the pair $(F, K_1)$ to $(F, K_2)$.
\end{theorem}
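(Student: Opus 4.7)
The plan is to verify the three claims separately: (1) $K_1$ and $K_2$ are isotopic as knots in $S^3$, (2) they induce the same surface slope on $F$, and (3) no self-homeomorphism of $S^3$ takes the pair $(F, K_1)$ to $(F, K_2)$.

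For claim (2), the surface slope of $K(p,q,r,n)$ with respect to $F$ is $pq + nr^{2}$, obtained by computing the self-linking of a push-off of the knot along $F$: the $(p,q)$-torus factor contributes $pq$ (the standard torus-knot framing) and the $r$-strand twisting contributes $nr^{2}$, since each of the $r$ parallel strands picks up $n$ units of linking with each of the other $r-1$ strands, plus its own $n$ self-twist. Direct substitution gives $(kq+m)q - m^{2} = kq^{2} + mq - m^{2}$ for $K_1$ and $(kq+q-m)q - (q-m)^{2} = kq^{2} + mq - m^{2}$ for $K_2$, so the two slopes coincide.

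For claim (1), I would present both knots as closures of the braid word $\left(\Pi_{1}^{q-1}\right)^{p}\left(\Pi_{1}^{r-1}\right)^{-r}$ with the respective parameters, and reduce one to the other using the identities of Section~\ref{section:Braids} (in particular Lemmas~\ref{full twist} and \ref{pi relation}) together with Markov moves. A geometric alternative is to realize each $K_i$ as the result of $-1$-surgery on an unknot $c_i$ encircling $r_i$ of the braid strands of $T(p_i, q)$, and then produce a homeomorphism of $S^{3}$ that exchanges the roles of the $r_i$-strand and the complementary $(q-r_i)$-strand groupings relative to the braid axis.

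Claim (3) is the heart of the theorem. Since $k \ge 2$, both $K_1$ and $K_2$ are HEM-Seifert with respect to one handlebody, so Dehn surgery along the common surface slope yields a small Seifert fibered space $M$, equipped with two distinguished Heegaard surfaces $F_1$ and $F_2$ that are the images of $F$ after the respective surgery descriptions. A hypothetical homeomorphism of $S^{3}$ carrying $(F, K_1)$ to $(F, K_2)$ would descend, after surgery, to a self-homeomorphism of $M$ sending $F_1$ to $F_2$. I would then read off the Seifert invariants of $M$ together with the data encoding how the exceptional fibers sit with respect to each $F_i$ (in Dean's HEM-Seifert setup these are explicitly computable from the parameters); these invariants record $m$ on one side and $q-m$ on the other, and under the hypotheses $q > 2$ and $(q,m)=1$ we have $m \ne q - m$. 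By the classification of self-homeomorphisms of small Seifert fibered spaces, no automorphism of $M$ can carry one decorated splitting to the other, giving the desired obstruction. The main obstacle is this last step: pinning down precisely which Seifert invariant records the distinction between $m$ and $q-m$, and verifying that no hidden symmetry of $M$ realizes the required swap of this data.
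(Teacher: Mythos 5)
This statement is not actually proved in the present paper: it is quoted as Theorem 4.1 of \cite{DolFPSTTK}, and the authors only record that Doleshal's proof of the non-homeomorphism claim uses the structure of the Seifert fibered space obtained by Dehn surgery at the shared surface slope. Measured against that, your plan for claim (3) is the same strategy as the cited original, and your slope computation in claim (2) is correct: $pq+nr^{2}$ specializes to $kq^{2}+qm-m^{2}$ for both knots, matching the value the paper quotes from Proposition 3.1 of \cite{GunKDPPPSR}.

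As a proof, however, the proposal has genuine gaps, and you flag the decisive one yourself. The whole content of the hardest claim is the step you defer: one must actually show the knots are primitive/Seifert (primitivity with respect to the other handlebody is needed, not just HEM-Seifert on one side, to know that surgery at the surface slope gives a solid torus glued to a Seifert piece over the disk, hence a small Seifert fibered space $M$); then compute from the parameters the exceptional-fiber multiplicities attached to each splitting; verify that the relevant multiplicities genuinely differ (this is where $m \neq q-m$, forced by $(q,m)=1$ and $q>2$, must enter through an explicit identification of fiber orders rather than by assertion); and finally rule out self-homeomorphisms of $M$ that permute exceptional fibers or re-fiber $M$. Without that computation the obstruction is not established. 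Claim (1) is likewise only gestured at: ``reduce one braid word to the other using the lemmas of Section \ref{section:Braids} and Markov moves'' is a plan, not an argument, and the isotopy $K(kq+m,q,m,-1)\simeq K(kq+q-m,q,q-m,-1)$ is itself a nontrivial step of \cite{DolFPSTTK} (the present paper reuses that method verbatim when extending to $k\in\set{0,1}$). In short: right framework, agreeing with the cited original, but the key Seifert-invariant computation and the isotopy argument are missing.
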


We can now prove an extension of Doleshal's result.

\begin{theorem}{DistinctPositionsOfKnots}\label{theorem:DistinctPositionsOfKnots}
For integers $k$, $q$, and $m$ with $k \in \set{0, 1}$, $q > 2$, $1 \leq m < q$, and $(q, m) = 1$, let $K_1$ and $K_2$ be the twisted torus knots $K_1 = K(kq + m, q, m, -1)$ and $K_2 = K(kq + q - m, q, q - m, -1)$, with their canonical embeddings on the genus two Heeagaard surface, $F$, for $S^3$. Then, $K_1$ and $K_2$ are isotopic as knots in $S^3$ and have the same surface slope with respect to $F$. 
\begin{enumerate}
\item[(i)]\label{Case1} When $k = 0$, and 
\begin{enumerate}
\item[(a)] $m = 1$, then $K_1$ is p/p, and $K_2$ is primitive with respect to one handlebody but neither primitive nor HEM-Seifert with resepct to the other.
\item[(b)] $m > 1$, then both $K_1$ and $K_2$ are primitive with respect to one handlebody, but neither primitive nor HEM-Seifert with respect to the other.
\end{enumerate}
\item[(ii)]\label{Case2} When $k = 1$, then both $K_1$ and $K_2$ are p/p.
\end{enumerate}
Further, when $k = 0$, there is no homeomorphism of $S^3$ sending the pair $(F, K_1)$ to $(F, K_2)$.
\end{theorem}

We make two important remarks. First, when $m = 1$ and $k > 0$, the knots are torus knots, and when $k = 0$ and $m = 1$, the knots are actually unknotted in $S^3$. Second, when $k = 1$, it is expected that there is, in fact, a homeomorphism between $(F, K_1)$ and $(F, K_2)$.

\begin{proof}
We observe that the method in \cite{DolFPSTTK} to show that the knots $K_1$ and $K_2$ are isotopic in $S^3$ does not depend on the hypothesis that $k \geq 2$. The exact method of proof establishes the statement for the cases when $k =0$ and $k = 1$ as well.

Let $H$ and $H'$ denote the two handlebodies on either side of the Heegaard surface, $F$, upon which $K_i$ naturally sits as a twisted torus knot. Note first that $K_1$ and $K_2$ both have the same surface slope, $kq^2 + qm - m^2$, by Proposition 3.1 of \cite{GunKDPPPSR}.

We first establish the relations between $K_1$ and $K_2$ to the handlebodies on either side of $F$. 

When $k = 0$, we consider separately the cases of $m=1$ and $m > 1$. 

For $m = 1$, $K_1$ represents the word $w_{1, q, 1, -1, 1}$, in Dean's notation, which is primitive in $\pi_1(H)$ by Theorem \ref{theorem:DeanTheorem}. Additionally, in $\pi_1(H')$, the knot represents $w'_{q, 1, 1, 1, -1}$, which is also primitive by Theorem \ref{theorem:DeanTheorem}. 
Now, $K_2$ represents the word $w_{q-1, q, q-1, -1, 1}$ in $\pi_1(H)$, which is equivalent to $w_{q-1, q, 0, -1, 1}$ by Lemma \ref{lemma:DeanLemma}. From \cite{DeaSSFDSHK}, we conclude that $K_2$ is not HEM-Seifert with respect to $H$. Finally, looking at $K_2$ within $\pi_1(H')$, we have $w'_{q, q-1, q-1, 1, -1}$, which is equivalent to $w'_{q, 1, 1, 1, -1}$ by Lemma \ref{lemma:DeanLemma}. 
This is primitive by Theorem \ref{theorem:DeanTheorem}. Thus, when $k = 0$ and $m=1$, the knot represented by $K_1$ is p/p, and $K_2$ is primitive with respect to $H'$ but neither primitive nor HEM-Seifert with respect to $H$.

Next, for $m > 1$, $K_1$ represents in $\pi_1(H)$ the word $w_{m, q, m, -1, 1}$, which is equivalent to the word $w_{m, q, 0, -1, 1}$ by Lemma \ref{lemma:DeanLemma}, and $K_1$ is neither primitive nor HEM-Seifert with respect to $H$. In $\pi_1(H')$, however, the knot represents the word $w'_{q, m, m, 1, -1}$, which is primitive by Theorem \ref{theorem:DeanTheorem}. The knot $K_2$ represents in $\pi_1(H)$ the word $w_{q-m, q, q-m, -1, 1}$, which is equivalent to $w_{q-m, m, 0, -1, 1}$. This is neither primitive nor HEM-Seifert. In $\pi_1(H')$, the word $w'_{q, q-m, q-m, 1, -1}$ is equivalent to $w'_{q, m, m, 1, -1}$, which is primitive by Theorem \ref{theorem:DeanTheorem}. So, when $k = 0$ and $m > 1$, neither $K_1$ nor $K_2$ are p/p, and if either of the configurations are p/S, then they are Seifert in a way different from those prescribed by Dean.

When $k = 1$, $K_1$ represents $w_{q+m, q, m, -1, 1}$ in $\pi_1(H)$, which is equivalent to $w_{q+m, q, -q, -1, 1}$, 
which shows $K_1$ is primitive
. In $\pi_1(H')$, $K_1$ represents $w'_{q, q+m, m, 1, -1}$, which is equivalent to $w'_{q, m, m, 1, -1}$ 
, which is again primitive
.  Now, $K_2$ represents $w_{2q-m, q, q-m, -1, 1}$ in $\pi_1(H)$, which is equivalent to $w_{2q-m, -(q - m), q-m, -1, 1}$
, which is primitive
. And in $\pi_1(H')$, $K_2$ represents $w'_{q, 2q - m, q-m, 1, -1}$, which is equivalent to $w'_{q, -m, -m, 1, -1}$
, showing the knot to be primitive
. Thus, when $k = 1$, $K_1$ and $K_2$ are both p/p knots.

Only in the case that $k=0$ and $m=1$ can we use the same methods as \cite{DolFPSTTK} to prove that the knots are inequivalent. Instead, we introduce a new method using the extended Goeritz group to show that when $k=0$, there can be no homeomorphism carrying $K_1$ to $K_2$. The goal will be to show that there does not exist an element of the extended
Goeritz group that sends $( F, K_1)$ to $( F, K_2)$. To do so, we will
consider $K_1$ and $K_2$ as the elements they represent in $H_1( F)$ and
study how the extended Goeritz group acts on them. 

From Lemma \ref{lemma:TwistedTorusKnotInH1}, $[K_1] = (q, -m, -kq - m, -m)$, and $[K_2] = (q, m-q, m-q-kq, m-q)$. We begin by acting on $[K_1]$ using the map $\gs^\ell \ds^{m_1} \gs \ds^{m_2} \gs \cdots \gs \ds^{m_n}$, where $\ell \in \set{0, 1}$, $|m_i| > 0$ for $i = 1, \dots, n-1$, and $m_n \in \mathbb{Z}$. By Lemma \ref{lemma:FormOfMap}, the map is represented by a matrix of the form $\begin{pmatrix} 
  C & 0\\ 
  0 & (C^T)^{-1} 
\end{pmatrix}$.

Since $\as$, $\bs$, and $\es$ are all of order two, it is clear that $[K_2]$ can be equal to $\as^h \bs^j \es^k \gs^\ell \ds^{m_1} \gs \ds^{m_2} \gs \cdots \gs \ds^{m_n} [K_1]$ if and only if $\gs^\ell \ds^{m_1} \gs \ds^{m_2} \gs \cdots \gs \ds^{m_n} [K_1] = \as^{h'} \bs^{j'} \es^{k'} [K_2]$, where $h'$, $j'$, and $k'$ are in $\set{0, 1}$. Consider $\gs^\ell \ds^{m_1} \gs \ds^{m_2} \gs \cdots \gs \ds^{m_n} [K_1]$.

We use the general form of the matrix representative, with $sv - tu = 1$, of the map to find a vector 
\[ \vect{v} = \begin{pmatrix} s & t & 0 & 0 \\ u & v & 0 & 0 \\ 0 & 0 &  v & - u \\ 0 & 0 & - t &  s \end{pmatrix} \begin{pmatrix} q \\ -m \\ -kq - m \\ -m \end{pmatrix} = \begin{pmatrix} qs - mt \\ qu - mv \\ mu + (-kq - m)v  \\ -ms + (kq + m)t  \end{pmatrix}. \]

We are trying to determine whether $\vect{v}$ can be equal to $\vect{x} = (x_1, x_2, x_3, x_4)^T = \as^{h'} \bs^{j'} \es^{k'} [K_2]$, so we solve a linear system of equations. To do this, set $\vect{v} = \vect{x}$ to obtain a system of linear equations in $s$, $t$, $u$, and $v$, and use the augmented matrix to solve the system in terms of these variables.

\[ \left( \begin{array}{cccc} q & -m & 0 & 0 \\ 0 & 0 & q & -m \\ 0 & 0 &  m & (-kq - m) \\ - m &  (kq + m) & 0 & 0  \end{array} \right| \left. \begin{array}{c} x_1 \\ x_2 \\ x_3 \\ x_4 \end{array} \right). \]

We seek integer solutions, so we row reduce using only integer multiples of rows, obtaining:
\[ \left( \begin{array}{cccc} q-m & kq & 0 & 0 \\ 0 & 0 & q-m & kq \\ 0 & 0 & m & (-kq-m) \\ -m & (kq+m) & 0 & 0 \end{array} \right| \left. \begin{array}{c} x_1 + x_4 \\ x_2 - x_3 \\ x_3 \\ x_4 \end{array} \right).\]

In the case that $k = 0$, this system is 
\[ \left( \begin{array}{cccc} q-m & 0 & 0 & 0 \\ 0 & 0 & q-m & 0 \\ 0 & 0 & m & -m \\ -m & m & 0 & 0 \end{array} \right| \left. \begin{array}{c} x_1 + x_4 \\ x_2 - x_3 \\ x_3 \\ x_4 \end{array} \right).\]
From the first row, we obtain a necessary condition for the existence of integer solutions, namely that $q-m$ divides $x_1 + x_4$.  Similarly, if $sv - tu = -1$, the corresponding calculation tells us that $q -m $ divides $x_1 - x_4$.

We then consider the possible values of $\vect{x} = \as^{h'} \bs^{j'} \es^{k'} [K_2]$. Since $\as$ acts as scaling by $-1$, it suffices to consider $(j', k') \in \set{(0, 1), (1, 0), (1,1)}$.
\[ [K_2] = (q, m-q, m-q, m-q),\]
\[ \bs[K_2] = (q, q-m, m-q, q-m),\]
\[ \es[K_2] = (m-q, m-q, m-q, q),\] and
\[ \bs \es [K_2] = (m-q, q-m, m-q, -q).\]
So $x_1 + x_4 $ and $x_1 - x_4 $ are both in the set $\set{m, \pm(2q -m) }$. Since $(2q-m, q-m) = (q, q-m) = (m, q-m) = (q, m) =1$, the only way that $q-m$ divides $x_1 + x_4$ or $x_1 - x_4$ is if $q-m = 1$.

But in this case, our system, with $sv - tu = 1$, becomes
\[ \left( \begin{array}{cccc} 1 & 0 & 0 & 0 \\ 0 & 0 & 1 & 0 \\ 0 & 0 & m & -m \\ -m & m & 0 & 0 \end{array} \right| \left. \begin{array}{c} x_1 + x_4 \\ x_2 - x_3 \\ x_3 \\ x_4 \end{array} \right).\]
Then, further elementary row operations result in 
\[ \left( \begin{array}{cccc} 1 & 0 & 0 & 0 \\ 0 & 0 & 1 & 0 \\ 0 & 0 & 0 & -m \\ 0 & m & 0 & 0 \end{array} \right| \left. \begin{array}{c} x_1 + x_4 \\ x_2 - x_3 \\ -mx_2 + (1-m)x_3 \\ mx_1 + (m+1)x_4 \end{array} \right),\]
so from the third row we find that a further necessary condition for an integer solution is that $m$ divide $x_3$. The same result is obtained under the assumption that $sv - tu = -1$. However, $x_3 = m-q = -1$. Thus $m = 1$, and $q-m = 1$ implies that $q=2$, while we assumed that $q > 2$. (Note, in fact, that in this case, $K_1 = K_2 = K(1, 2, 1, -1)$ have exactly the same parameters, as twisted torus knots, and represent the unknot.)

Thus, when $k = 0$, there is no orientation-preserving homeomorphism of $S^3$ fixing $F$ whose induced map on $H_1(F)$ takes $[K_1]$ to $[K_2]$, so there is thus no such homeomorphism taking $K_1$ to $K_2$. Any orientation-reversing homeomorphism would simply be the composition of an orientation-preserving homeomorphism with a mirror reflection, which will certainly not carry $K_1$ to $K_2$.
\end{proof}

We remark that this technique, surprisingly, does not demonstrate an obstruction in the case that $k =1$. In fact, there \emph{is} an induced map on homology taking $[K_1] = [K(q + m, q, m, -1)]$ to $[K_2] = [K(2q - m, q, q - m, -1)]$, namely $\as \bs \es \gs \ds^{-1} \gs \ds^2$. It remains unclear, however, whether the associated homeomorphism does actually carry $K_1$ to $K_2$. Future work will investigate these distinctions between the extended Goeritz group and the induced actions on homology.

\section*{Acknowledgments}
We would like to thank the referee for very helpful and detailed comments. 

\bibliographystyle{hplain}
\bibliography{AdditionalFinalarxivVersion.bib}
\end{document}